\documentclass[a4paper,reqno,11pt]{amsart}
\usepackage[T1]{fontenc}
\usepackage[utf8]{inputenc}
\usepackage[english]{babel}

\usepackage{graphicx}
\usepackage{amsmath}
\usepackage{amsthm}
\usepackage{amssymb}
\usepackage{mathtools}
\usepackage{amsmath}
\usepackage{amsfonts}
\usepackage{mathrsfs}
\usepackage{esint}
\usepackage{yfonts}
\usepackage{quoting}
\usepackage{graphics}
\usepackage{booktabs}
\usepackage{bm}
\usepackage{bbm}
\usepackage{lmodern}
\usepackage{stmaryrd}
\usepackage{caption}
\usepackage{comment}
\usepackage{enumitem}			
\usepackage{tikz}
\usepackage{bookmark}
\usepackage{pgfplots}
\usepgfplotslibrary{fillbetween}
\usepackage{orcidlink}
\usepackage[mathscr]{euscript}

\usepackage{hyperref}

\usepackage{cleveref}

\hypersetup{
	colorlinks=true,
	linkcolor=blue,
	citecolor=blue,
	urlcolor=black,
	linktoc=all
}

\usepackage[margin=1.1in]{geometry}

\quotingsetup{font=small}

\theoremstyle{plain}
\newtheorem{proposition}{Proposition}[section]

\theoremstyle{plain}
\newtheorem{theorem}{Theorem}[section]
\numberwithin{equation}{section}	

\theoremstyle{plain}
\newtheorem{lemma}{Lemma}
\numberwithin{lemma}{section}

\theoremstyle{plain}

\theoremstyle{definition}
\newtheorem{remark}{Remark}[section]

\newtheorem{example}{Example}

\DeclareMathOperator{\sign}{sign}
\DeclareMathOperator{\inj}{inj}

\DeclareMathOperator{\divg}{div}
\DeclareMathOperator{\ric}{Ric}

\newcommand{\p}{\partial}
\newcommand{\pphi}{\varphi}
\newcommand{\eps}{\varepsilon}
\newcommand{\R}{\mathbb{R}}
\newcommand{\Sp}{\mathbb{S}^{n-1}}
\newcommand{\Om}{\Omega}
\newcommand{\al}{\alpha}
\newcommand{\bb}{\beta}
\newcommand{\ot}{\otimes}

\newcommand{\lap}[1]{\Delta #1}
\newcommand{\lapw}[1]{\Delta_g^f #1}

\def\XXint#1#2#3{{\setbox0=\hbox{$#1{#2#3}{\int}$ }
		\vcenter{\hbox{$#2#3$ }}\kern-.6\wd0}}

\usepackage{enumitem}

\begin{document}
	\title[Serrin's Problem on weighted Riemannian Manifolds]{Symmetry and rigidity results for Serrin's overdetermined type problems in weighted Riemannian manifolds}
	
	\author[Laura Accornero]{Laura Accornero} 
\address[]{Laura Accornero. Dipartimento di Matematica ‘Federigo Enriques’, Università degli Studi di Milano, Via Cesare Saldini 50, 20133, Milan, Italy}
\email{laura.accornero@unimi.it}

\author[Giulio Ciraolo]{Giulio Ciraolo \orcidlink{0000-0002-9308-0147}}
\address[]{Giulio Ciraolo. Dipartimento di Matematica ‘Federigo Enriques’, Università degli Studi di Milano, Via Cesare Saldini 50, 20133, Milan, Italy}
\email{giulio.ciraolo@unimi.it}

\subjclass[2020]{Primary 35R01, 35N25, 53C24; Secondary 35B50, 58J05, 58J32}
\date{\today}
\dedicatory{}
\keywords{Overdetermined PDEs, rigidity, weighted Riemannian manifolds}

\begin{abstract}
We study Serrin's overdetermined boundary value problems in bounded domains on weighted Riemannian manifolds. When the closure of the domain is compact, we establish a rigidity result that characterizes both the solution and the geometry of the ambient manifold. We further address the case of domains with non-compact closure for manifolds conformally equivalent to the Euclidean space, possibly degenerating or becoming singular at a point, where both the weight and the conformal factor are radial functions. 
\end{abstract}

\maketitle

\maketitle
\tableofcontents

\section{Introduction}
The classical overdetermined boundary value problem introduced by Serrin in his seminal paper \cite{Serrin} concerns solutions $u$ of the torsion problem
\begin{equation}\label{serrin1}
	\begin{cases}
		\Delta u = -1 & \text{in } \Omega, \\
		u = 0 & \text{on } \partial \Omega,
	\end{cases}
\end{equation}
supplemented with the additional condition 
\begin{equation}\label{serrin2}
	\frac{\partial u}{\partial \nu} = -c \quad \mbox{on} \ \partial \Omega \, .
\end{equation} 
Here, $\Omega \subset \R^n$, with $n \geq 2$, is a bounded domain, $\nu$ denotes the outward the normal derivative on $\partial \Omega$, and $c$ is a positive constant, and hence it is given by 
\begin{equation}\label{c_def}
	c= \frac{|\Omega|}{|\partial \Omega|} \,,
\end{equation}
as it follows by a straightforward application of the divergence theorem. 

Serrin's celebrated theorem states that the existence of such a solution forces $\Omega$ to be a ball and $u$ to be radially symmetric. In particular, one has that $\Omega = B_R(x_0)$ for some $R>0$ and $x_0 \in \R^n$ and
\begin{equation} \label{u_torsion}
	u(x)=\frac{R^2-|x-x_0|^2}{2n} \,.
\end{equation}

The main tool of Serrin's proof is a technique known as \textit{moving planes}, inspired by the proof of Alexandrov's Soap Bubble Theorem (see \cite{Alexandrov}). This approach turns out to be very powerful, as it can be used to generalize Serrin' symmetry result to more general semilinear and quasilinear equations (see \cite{Serrin}).  

Right after Serrin's paper, Weinberger in \cite{Weinberger} provided a very short proof of the same result for the torsion problem \eqref{serrin1}, using an integral identity (namely, the Pohozaev identity) and the maximum principle applied to an auxiliary function, later called $P$-function. Over the years, further extensions and new techniques have appeared in the literature, including variational, geometric, and fully nonlinear generalizations (see, e.g., \cite{BianchiniCiraolo2018,BrandoliniNitschSalaniTrombetti,ChoulliHenrot,CiraoloRoncoroni2018,FarinaKawohl,FarinaValdinoci,FigalliZhang,FragalaGazzolaKawohl,FragalaVelichkov,GarofaloLewis,RWH}). Among these, we mention \cite{BianchiniCiraolo2018} and \cite{BrandoliniNitschSalaniTrombetti} where Serrin's overdetermined problem is proved by using integral identities.

The approaches based on integral identities, as well as Weinberger's approach with the $P$-function, may give information not only on the classification of the solution but also on the ambient manifold, once the problem is settled in a Riemannian manifold.
Indeed, these methods rely on the introduction of the $P$-function or a suitable vector field which combines the gradient and the values of $u$. Then, an application of Pohozaev's identity gives that the Hessian $\nabla^2 u$ is a multiple of the identity matrix at every point and hence the solution $u$ must be radial. This directly leads to the conclusion that $\Omega$ must be a ball.

The occurrence of functions whose Hessian is proportional to the identity is a classical argument appearing also in geometric analysis, particularly in rigidity phenomena. For instance, it is well-known from Tashiro \cite{Tashiro} that if a complete Riemannian manifold admits a nontrivial function whose Hessian is a constant multiple of the metric, then the manifold must be isometric to the Euclidean space. Thus, the structural conclusion arising in the overdetermined problem mirrors well-known geometric rigidity results.

It is therefore natural to view Serrin's problem not only as a classification result for solutions of an elliptic equation, but also a way to obtain rigidity theorems in Riemannian geometry. 

A first step in this direction has been done by Roncoroni \cite{Roncoroni} and Farina \& Roncoroni \cite{FarinaRoncoroni} who studied Serrin's overdetermined problem
\begin{equation}\label{serrinknu}
	\begin{cases}
		\Delta{u}+nku=-1 & \mbox{in} \ \Omega\\
		u=0 & \mbox{on}  \ \partial \Omega \\
		\frac{\partial u}{\partial \nu} = -c & \mbox{on} \ \partial \Omega \, ,
	\end{cases}
\end{equation} 
on warped product manifolds with Ricci curvature bounded from below. More recently, Freitas, Roncoroni \& Santos \cite{FreitasRoncoroniSantos} have studied problem \eqref{serrinknu} on manifolds endowed with a closed conformal vector field. Finally, Andrade, Freitas \& Mar\'{i}n \cite{AndradeFreitasMarin} considered the case of some specific class of manifolds endowed with a conformal vector field. 

In this paper, we aim to study Serrin's overdetermined problem in weighted manifolds, where the geometry is modified by the presence of a smooth density and the Laplacian and Ricci tensor are replaced by the weighted Laplacian and the Bakry--\'Emery Ricci tensor, respectively. In this setting, both the analytic behavior of solutions and the geometric interpretation of rigidity become subtler. Few partial results have been obtained (see for instance \cite{AraujoFreitasSantos} and \cite{RWH}) and a complete analogue of Serrin's theorem -- along with its geometric implications -- remains open. 
\subsection{The weighted setting and two motivating examples}
We recall that a weighted Riemannian manifold is defined as a triple $(M,g,d\mu_g)$, where $(M,g)$ is a Riemannian manifold with Riemannian measure $dV_g$, $e^{-f}:M\rightarrow \R$ is a smooth function, which denotes the weight, and $d\mu_g=e^{-f}dV_g$ is the weighted measure. The weighted Laplacian operator is defined by
\begin{equation}\label{operator}
	\lapw u \coloneq e^f \divg_g(e^{-f} \nabla u) = \lap_g u - g(\nabla f,\nabla u) \, ,
\end{equation}
where $$\lap_g u =\divg_g \nabla u$$ is the Laplacian of $u$.\\
Moreover, the $m$-dimensional Bakry-\'{E}mery-Ricci tensor is given by
\begin{equation}\label{Ricmf}
	\ric_f^m\coloneq \ric+\nabla^2f-\frac {df\otimes df}{m-n} \, ,
\end{equation}
where $m \in \R\setminus\{0\}$ and $\ric$ is the Ricci tensor of $(M,g)$. Finally, the $\infty$-dimensional is defined by
\begin{equation}\label{Ricinf}
	\ric_{f}^{\infty}\coloneq \ric+\nabla^2f \, .
\end{equation}
The weighted volume of a domain $\Om\subset M$ is defined as
$$|\Om|_f \coloneq \int_\Om d\mu_g=\int_\Om e^{-f} \, dV_g \, .$$
Throughout, we denote by $\overline \Om^g$ the closure of $\Om$ with respect to the metric $g$.
\medskip

We start by discussing two examples, which exhibit several intriguing feature that make them particularly interesting and are a motivation for this paper.

\begin{example} \label{ex_power}
	We consider the triple $(\R^n,\delta, |x|^\alpha dx)$, where $\delta$ is the standard Euclidean metric and $\alpha >-n$. Thus, in this case, $e^{-f}=|x|^\alpha$ and
	\begin{equation*}
		f(x) = -\alpha \log |x| \,.
	\end{equation*}
	
	It is straightforward to see that a solution to 
	\begin{equation} \label{torsion_ex}
		\begin{cases} 
			\lapw u = -1 & \textmd{ in } B_R(0) \\
			u=0  & \textmd{ on } \partial B_R(0)
		\end{cases}
	\end{equation}
	is given by 
	\begin{equation} \label{u_torsion_ex}
		u^*(x) = \frac{R^2 - |x|^2}{2(n+\alpha)} \,,
	\end{equation}
	and clearly \eqref{u_torsion_ex} fulfills 
	\begin{equation*}
		u^*_\nu = -\frac{R}{n+\alpha} \ \textmd{ on } \partial B_R(0) \quad \textmd{ and } \quad \nabla^2 u^* = - \frac{1}{n+\alpha} I  \ \textmd{ in }  B_R(0) \,.
	\end{equation*}   
	Hence, it is natural to consider a Serrin's type symmetry result for $(\R^n,\delta, |x|^\alpha dx)$, and whether this overdetermined problem leads also to a rigidity theorem for some related weighted manifolds. 
	
	Since rigidity results are often obtained under a nonnegativity assumption on a suitable Ricci tensor, it may be useful to understand for which manifolds it is reasonable to expect a symmetry result, it is important to understand for which $m$ we have that $\ric_f^m \geq 0$. Straightforward calculations give
	\begin{equation*}
		f_i = - \alpha \frac{x_i}{|x|^2} \quad \textmd{ and } f_{ij} =  - \frac{\alpha}{|x|^2}\left(\delta_{ij} - 2 \frac{x_i x_j}{|x|^2} \right)
	\end{equation*}
	and 
	\begin{equation*}
		\ric_f^m =  - \frac{\alpha}{|x|^2}\left(\delta_{ij} + \left(\frac{\alpha}{m-n} - 2\right) \frac{x_i x_j}{|x|^2} \right)  
	\end{equation*}
	where we set $\dfrac{\alpha}{m-n} = 0 $ if $m=\infty$. Thus, if $\alpha \neq 0$ then
	\begin{equation}\label{Ricex1}
		\ric_f^m \geq 0 \quad \iff \quad  \alpha < 0 \ \textmd{ and } \ n + \alpha \leq m \leq n \,.
	\end{equation}
	We will return to this condition later in the introduction.
\end{example}

\begin{example}
	Serrin's solution to \eqref{serrin1} -- \eqref{serrin2} is closely related to an isoperimetric
	type inequality for the so-called torsional rigidity. For a bounded connected domain $\Om \subset \R^n$ with
	smooth boundary $\p\Om$, the torsional rigidity is defined by
	\begin{equation*}
			\tau(\Om)\coloneq \sup_{v \in W_{0}^{1,2}(\Om)\setminus\{0\}}\frac{\displaystyle\left(\int_\Om v \, dx\right)^2}{\displaystyle\int_\Om |D v|^2 \, dx} \, .
	\end{equation*}
	By the direct method of Calculus of Variations, the supremum of $\tau(\Om)$ is achieved by a multiple
	of the function $u$ satisfying \eqref{serrin1}. The isoperimetric problem for $\tau(\Om)$ is answered by Saint Venant’s principle, which states that $\tau(\Om) \leq \tau(B_r)$, where $B_r$ is a ball such
	that $|\Om| = |B_r|$, with equality holding if and only if $\Om= B_r$.  The standard proof of
	this result uses rearrangement techniques and isoperimetric inequality (see \cite[Section 1.12]{PoylaSzego}). \\
	On the other hand, from the first variational formula or Hadamard’s formula for $\tau(\Om)$ it follows that that a stationary domain for $\tau(\Om)$ among domains with fixed volume must satisfy $\p_\nu u = -c$. Therefore, Serrin’s symmetry result implies that the only stationary domains for $\tau(\Omega)$ under fixed volume are balls.\\
	\indent Now, consider the torsion problem for the weighted operator
	\begin{equation*} 
		\begin{cases} 
			\lapw u = -1 & \textmd{ in } \Om \\
			u=0  & \textmd{ on } \partial \Om \, ,
		\end{cases}
	\end{equation*}
	where $\Om$ is a bounded domain in $\R^n$ and $\delta$ is the Euclidean metric. The weighted torsional rigidity of $\Om$ is defined as 
	$$\tau^f(\Om)\coloneq \sup_{v \in W_{0}^{1,2}(\Om)\setminus\{0\}}\frac{\displaystyle\left(\int_\Om v \, e^{-f} \, dx\right)^2}{\displaystyle\int_\Om |D v|^2 \, e^{-f} \, dx} \, .$$
Exploiting the results on the log-convex conjecture in \cite{Chambers}, in \cite{BrandoliniChiacchio} the authors consider smooth, concave, radial functions $f$ (equivalently, log-convex radial weights $e^{-f}$) and prove via weighted rearrangement techniques that, among domains with fixed weighted volume, the centered ball maximizes the weighted torsional rigidity.\\
	As in the classical setting, Hadamard’s formula for $\tau^f(\Omega)$ shows that a stationary domain under a weighted volume constraint must satisfy the overdetermined boundary condition $\p_\nu u = -c$. This naturally leads to the question of whether a Serrin-type symmetry result holds in the weighted Euclidean space $(\mathbb{R}^n, \delta, e^{-f}dx)$ for this class of radial weights.
\end{example}
\subsection{Main results}
In this paper, we prove two main results.
\par In Theorem \ref{main}, we establish a rigidity result for both the solution and the underlying manifold associated with \eqref{ourprob} in weighted Riemannian manifolds when the closure of the domain $\overline\Om^g$ is compact.
In \cite{AndradeFreitasMarin,FarinaRoncoroni,FreitasRoncoroniSantos,Roncoroni}, Serrin-type results for \eqref{serrinknu} are obtained via the $P$-function method. In contrast, we adopt a different approach based on integral identities, namely a Pohozaev-type identity adapted to our setting, which may be of independent interest. For completeness, we also provide an alternative proof based on the classical $P$-function method and compare the two approaches. In both cases, the classification of the ambient manifold follows from a local version of Tashiro’s theorem.

Note that the $P$-function method relies on the strong maximum principle and requires $\overline\Om^g$ to be compact; hence, it is not directly applicable when this condition fails. Our second result addresses this issue. In particular, in Theorem \ref{main_nc} we consider a class of weighted Riemannian manifolds for which $\overline\Om^g$ may not be compact, so that the $P$-function method is not applicable a priori, while our approach still applies. More precisely, we extend Serrin’s result to manifolds conformally equivalent to the Euclidean space, possibly degenerating or becoming singular at a point, where both the weight and the conformal factor are radial functions. However, due to the presence of a singularity, no rigidity result for the ambient manifold is obtained in this case.

In what follows, for simplicity, we will refer to the two settings as the compact case and the non-compact case, corresponding to whether the closure of the domain $\overline{\Omega}^g$ is compact or not.
\par Our first main result is the following:
\begin{theorem}\label{main}
	Let $(M,g,e^{-f}dV_g)$ be a weighted, Riemannian manifold of dimension $n$, with $e^{-f}\in C^2(M)$. Let $\Om$ be a bounded, open and connected set in $(M,g,e^{-f}dV_g)$ with boundary $\p\Om$ of class $C^{1}$. Assume that 
	\begin{itemize}
		\item [\textit(i)] there exist $\beta \in (0,1]$ and $V\in C^2(\Om)\cap C^1(\overline \Om^g)$ such that 
		\begin{equation} \label{V_hp}
			\nabla^2 V\leq \frac \beta n g \quad \text{and} \quad \lapw V=1 \quad \text{in } \Om\, ;
		\end{equation}
		\item [\textit(ii)] it holds 
		\begin{equation}\label{Ricf_hp}
			\ric_f^{N} \geq 0 \quad \text{in } \Om
		\end{equation}
		for some $N \leq N_\beta$, where 
		\begin{equation}
			N_\beta =
			\begin{cases}
				\frac{n}{\beta} & \textmd{ if } 0< \beta < 1 \\
				\infty& \textmd{ if } \beta = 1 \,;
			\end{cases}
		\end{equation}
		\item [\textit(iii)] if $\beta=1$, then
		\begin{equation}\label{iii}
			g(\nabla f,\nabla u)\leq 0 \quad \text{in} \ \Om \, .
		\end{equation}
	\end{itemize}
	If $u$ is a smooth solution to
	\begin{equation}\label{ourprob}
		\begin{cases}
			\lapw{u}=-1 & \mbox{in} \ \Omega\\
			u=0 & \mbox{on}  \ \partial \Om \\
			u_\nu = -c & \mbox{on} \ \partial \Om \, , 
		\end{cases}
	\end{equation}
	then $u$ is radial and $\Om$ is a metric ball $B_R^g(p)$ isometric to a Euclidean ball, $\beta=1$, $e^{-f}$ is constant and $u$ is given by
	\begin{equation}\label{uespl}
		u(x)=\frac{R^2-d_g(x,p)^2}{2n}\, .
	\end{equation}
\end{theorem}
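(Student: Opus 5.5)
The plan is to follow the integral-identity strategy: a weighted Pohozaev identity combined with a weighted Bochner formula, arranged so that every inequality must be an equality. First, from the divergence theorem applied to $e^{-f}\nabla u$, the constant is forced to be
\[
c=\frac{|\Om|_f}{|\p\Om|_f},\qquad |\p\Om|_f=\int_{\p\Om}e^{-f}\,d\sigma .
\]
Next, I would derive a Pohozaev-type identity using the auxiliary function $V$ of hypothesis (i). I would integrate $\divg(e^{-f}X)$ for the field
\[
X=g(\nabla V,\nabla u)\nabla u-\tfrac12|\nabla u|^2\nabla V+u\nabla V .
\]
This produces boundary terms of the form $c^2\int_{\p\Om}V_\nu e^{-f}$. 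Applying the divergence theorem to $e^{-f}\nabla V$ and using $\lapw V=1$ rewrites these as $c^2|\Om|_f$. The interior terms are $\int_\Om \nabla^2V(\nabla u,\nabla u)e^{-f}$, together with terms in $u$ and $|\nabla u|^2$ that can be reduced using $\lapw u=-1$, $\lapw V=1$ and $\int_\Om|\nabla u|^2e^{-f}=\int_\Om u\,e^{-f}$. Combining with $\nabla^2V\le\frac{\beta}{n}g$ should give an inequality of the form
\[
\int_\Om u\,e^{-f}\,\Big(\tfrac{n+2}{n}\text{-type constant depending on }\beta\Big)\ \ge\ c^2|\Om|_f .
\]
I will fix the constants by testing on the model case $V=|x|^2/(2n)$, $\beta=1$, $f$ constant.

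For the reverse inequality I would integrate the weighted Bochner identity against $u$ (or use the $P$-function $P=|\nabla u|^2+\frac{2}{N}u$ in integrated form). The starting point is
\[
\tfrac12\lapw|\nabla u|^2=|\nabla^2u|^2+g(\nabla u,\nabla\lapw u)+\ric_f^\infty(\nabla u,\nabla u).
\]
I would then use the refined estimate
\[
|\nabla^2u|^2+\ric_f^\infty(\nabla u,\nabla u)\ \ge\ \frac{(\lapw u)^2}{N}+\ric_f^N(\nabla u,\nabla u),
\]
which comes from $|\nabla^2u|^2\ge(\Delta u)^2/n$ and the algebraic inequality $\frac{a^2}{n}+\frac{b^2}{N-n}\ge\frac{(a+b)^2}{N}$ applied with $a=\Delta u$, $b=-g(\nabla f,\nabla u)$. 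With \eqref{Ricf_hp}, this gives $\lapw P\ge0$ after a suitable normalization. Integrating by parts twice against $u$, with boundary data $u=0$ and $|\nabla u|=c$, yields the opposite integral inequality. The restriction $N\le N_\beta=n/\beta$ is exactly what makes the two constants match.

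In the endpoint case $\beta=1$ one has $N=\infty$, so the term $\frac{(g(\nabla f,\nabla u))^2}{N-n}$ is lost. There I would instead handle the cross term $\int u\, g(\nabla f,\nabla u)e^{-f}$ using hypothesis (iii), which gives it the right sign.

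Equality then propagates through every inequality:
\begin{itemize}
\item $|\nabla^2u|^2=(\Delta u)^2/n$, so $\nabla^2u$ is a multiple of $g$;
\item $\frac{\beta}{n}=\frac1N$, and the $f$-terms vanish, so $g(\nabla f,\nabla u)\equiv0$;
\item hence $\Delta u=-1$ and $\nabla^2u=-\frac1n g$ in $\Om$.
\end{itemize}
Feeding this back into the comparison of constants should force $\beta=1$. The main obstacle is then showing that $e^{-f}$ is constant, since so far only $\nabla f\perp\nabla u$ is known. I would use equality in the Pohozaev step, $\nabla^2V(\nabla u,\nabla u)=\frac1n|\nabla u|^2$, together with the structure of $\nabla^2u$. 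Since $\nabla u$ is a radial field from a critical point $p$, I would attempt to show $\nabla f$ vanishes along the flow lines, combining $\ric_f^N(\nabla u,\nabla u)=0$ and $\ric(\nabla u,\cdot)=0$, which comes from differentiating $\nabla^2u=-\frac1n g$. A further integral identity forcing $\nabla f=0$ may be needed here, and I expect this to be the delicate point.

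Finally I would apply a local Tashiro-type argument. A function with $\nabla^2u=-\frac1n g$ has a unique maximum point $p$ inside $\Om$, and $u=\frac{R^2-d_g(x,p)^2}{2n}$ along geodesics from $p$. The Hessian condition makes the exponential map at $p$ an isometry from a Euclidean ball onto $\{u>0\}=\Om$, with $u=0$ on $\p\Om$ giving $\Om=B_R^g(p)$. This yields \eqref{uespl}.
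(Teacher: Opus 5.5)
Your overall architecture (Pohozaev identity built from $V$, the Bochner/Ciraolo--Corso identity integrated against $u$, propagation of equality, then a local Tashiro lemma) is the paper's, and your treatment of $\beta=1$ via (iii) works. The genuine gap is the equality analysis when $\beta<1$.

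For $n<N<N_\beta$ your two bounds
\[
\frac{N+2}{N}\int_\Omega u\,d\mu_g\le c^2|\Omega|_f\le \frac{n+2\beta}{n}\int_\Omega u\,d\mu_g
\]
are already incompatible. For $N=N_\beta=n/\beta$, however, every inequality can be an equality. Equality in $\frac{a^2}{n}+\frac{b^2}{N-n}\ge\frac{(a+b)^2}{N}$, with $a=\Delta_g u$, $b=-g(\nabla f,\nabla u)$ and $a+b=-1$, means $a/n=b/(N-n)=-1/N$, i.e.
\[
\nabla^2 u=-\frac{\beta}{n}\,g,\qquad g(\nabla f,\nabla u)=1-\beta .
\]
It does not give $g(\nabla f,\nabla u)\equiv 0$ and $\nabla^2u=-\frac1n g$. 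So no comparison of constants can force $\beta=1$: your identities only see $\ric_f^{N}(\nabla u,\nabla u)$. The model of Example~\ref{ex_power} (weight $|x|^\alpha$ with $\alpha>0$ on a centered ball, $V=-u^*$, $\beta=\frac{n}{n+\alpha}$) saturates every inequality in your chain. There $\ric_f^{N_\beta}=-\frac{\alpha}{|x|^2}\bigl(\delta_{ij}-\frac{x_ix_j}{|x|^2}\bigr)$ vanishes on the radial vector $\nabla u$.

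The missing step is the paper's exclusion of $\beta<1$:
\begin{enumerate}
\item Apply the Tashiro lemma to $u/\beta$ to get a Euclidean ball with $u=\frac{\beta}{2n}(R^2-r^2)$.
\item Integrate $g(\nabla f,\nabla u)=1-\beta$ to obtain $e^{-f}=r^{a}e^{s(\theta)}$ with $a=\frac{n(1-\beta)}{\beta}>0$.
\item Use the full tensor condition $\nabla^2 f-\frac1a\,df\otimes df\ge 0$ (recall $\ric=0$) in the \emph{tangential} directions. Tracing over $S^{n-1}$ gives $a(n-1)+\Delta_{\overline g}s+\frac1a|\nabla_\theta s|^2\le 0$, which integrates over the sphere to a contradiction.
\end{enumerate}
If the weight is assumed positive at $p$, the blow-up of $f=-a\log r-s(\theta)$ as $r\to 0$ gives the contradiction even more directly.

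The constancy of $e^{-f}$ when $\beta=1$, which you leave open, does not require showing $\nabla f=0$ from curvature along flow lines. Once the Tashiro lemma provides geodesic polar coordinates about $p$ with $\nabla u=-\frac{r}{n}\nabla r$, the equality $g(\nabla f,\nabla u)=0$ says $\partial_r f=0$, so $e^{-f}=e^{s(\theta)}$. The paper then uses $\ric=0$ to get $\nabla^2 f\ge 0$, hence $\Delta_g f\ge 0$. On the other hand, $\int_\Omega u\,\Delta_g f\,dV_g=-\int_\Omega g(\nabla u,\nabla f)\,dV_g=0$. Thus $\Delta_g f\equiv 0$, i.e.\ $\Delta_{\overline g}s=0$ on $S^{n-1}$, and $s$ is constant; continuity of the weight at $p$ would also suffice. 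This uses the radial structure, so the Tashiro lemma must be applied before, not after, the analysis of $f$.
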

\hfill\medskip\\
\noindent We now comment the assumptions of Theorem \ref{main}. 

The existence of a function $V$ satisfying \eqref{V_hp} is motivated as follows. Since we aim to prove a rigidity result based on a Serrin's overdetermined problem, it is reasonable to assume that at least the Hessian of the solution of the torsion problem in a ball is comparable to the metric. In particular, in the model examples of Example \ref{ex_power}, we can take $V=-u^*$ and hence \eqref{V_hp} is fulfilled by setting $\beta=\frac{n}{n+\alpha}. $

Condition \eqref{Ricf_hp} appears natural within the framework of weighted Riemannian manifolds. For instance, in \cite{AraujoFreitasSantos} the authors established a Serrin-type rigidity result for the overdetermined problem \eqref{serrinknu} in weighted generalized cones by imposing the curvature condition
$$\ric_{f}^{\alpha+n}\geq k(n+\alpha-1)g \, ,$$
where $e^{-f}$ is a homogeneous weight of degree $\alpha>0$. In our setting, this corresponds to the case $k=0$. Even in the non-weighted framework, several rigidity results have been obtained under a lower bound assumption on the Ricci curvature (see \cite{AndradeFreitasMarin}, \cite{FarinaRoncoroni}, \cite{FreitasRoncoroniSantos}).\\
\indent Finally, the compactness of $\overline\Om^g$ allow us to apply \Cref{farinaroncoroni} and derive a rigidity result for the underlying manifold. In \Cref{domaincpt} we show that this assumption is in fact necessary. 
\par \Cref{main} shows that, if $\overline\Om^g$ is compact, then there are no non-trivial weights satisfying conditions \eqref{V_hp} and \eqref{Ricf_hp} for which problem \eqref{ourprob} admits a solution. Unfortunately, if $M$ is $\R^n$ endowed with the Euclidean metric $\delta$, weights of the form 
\[
e^{-f} = |x|^\alpha
\] 
with $\alpha \neq 0$ do not fall into this class. Indeed, as noted in \Cref{ex_power}, in this case a necessary condition for \eqref{Ricf_hp} to hold is \(\alpha < 0\), while $\alpha\geq 0$ is necessary if we require the weight to be continuous. Consequently, rigidity for problem \eqref{ourprob} in the setting of \Cref{ex_power} remains a challenging open problem.
\par Let $O$ denote the origin of $\R^n$. In the second result of this paper, we consider the manifold 
\[
M = \mathbb{R}^n \setminus \{O\}
\] 
endowed with a Riemannian metric conformal to the Euclidean one, and we show that, in this setting, such weights can be recovered without forcing \(\alpha = 0\).
\\ Specifically, we consider the overdetermined problem \eqref{ourprob} in the weighted Riemannian manifold 
$$
(\R^n\setminus\{O\},g,e^{-f}dV_g)
$$
with the conformal metric
\begin{equation}\label{g}
	g_{ij}=e^{2\phi}\delta_{ij}\coloneqq|x|^{-2\gamma}\delta_{ij} \, ,
\end{equation}
where $\gamma<1$. As we will see in \Cref{Ofinitedist}, this condition ensures that the origin is not a point at infinity, that is, the geodesic distance from $x$ to $O$, defined by
\begin{equation*}
	d_g(x,O)\coloneq \inf\{L(\sigma) : \sigma:[0,1)\to M \ \text{admissible curve}, \sigma(0)=x, \lim_{t \to 1^-} \sigma(t) = O\} \, ,
\end{equation*}
where $L(\sigma)$ denotes the length of the curve $\sigma$ with respect to the metric $g$, is finite for every $x \in M$. In this case, we say that a function $u$ is radially symmetric with respect to the origin if it depends only on $d_g(x,O)$, and we also define the ball of radius $R$ centered at the origin with respect to $g$ as
\begin{equation}\label{defball}
	B_R^g \coloneqq \{ x \in M : d_g(x,O) < R \} \,.
\end{equation} 
We consider the case of homogeneous radial weights 
\begin{equation} \label{pesopower}
	e^{-f(x)}=d_g(x,O)^\alpha \, ,
\end{equation} 
where $\alpha \in \R$. In this setting, we will show that, for a suitable range of $\alpha$ and $\gamma$, conditions \eqref{V_hp} and \eqref{Ricf_hp} are satisfied for this class of weights, with $\beta<1$. Hence, in this case, we can obtain a Serrin-type symmetry result similarly to the compact case. Moreover, as noted in \Cref{gammamin1}, the assumption $\gamma<1$ is not restrictive: even for $\gamma>1$, we can obtain an analogous result for the overdetermined problem \eqref{ourprob}.\\
We consider problem \eqref{ourprob} in weak sense, i.e., $u$ is a weak solution if it satisfies
\begin{equation}\label{weak}
	\int_\Om g(\nabla u,\nabla \varphi)\,d\mu_g = \int_\Om \varphi \, d\mu_g \quad \forall \varphi \in C^\infty_{c} (\Om) \, ,
\end{equation}
where 
$$d\mu_g \coloneqq e^{-f}dV_g \, .$$ 
Thus, from \eqref{weak}, we observe that the natural space related to the problem \eqref{ourprob} is \\$W_0^{1,2}(\Om,d\mu_g)$, defined as the closure of $C^\infty_c(\Om)$ with respect to the norm given by 
\begin{equation}\label{weightnorm}
	\|u\|_{W^{1,2}_0(\Om,d\mu_g)}\coloneqq \left(\int_\Om u^2 \, d\mu_g  +\int_\Om |\nabla u|^2 \, d\mu_g \right)^{\frac 1 2} \, .
\end{equation}
As we will see in \Cref{preliminarynoncompl}, this choice is natural and justified by technical considerations related to the divergence theorem.
\begin{theorem}\label{main_nc}
	Let $n\geq 3$, $\Om$ be a bounded, open and connected set in $(\R^n\setminus\{O\},g)$ with compact boundary $\p\Om$ of class $C^{1,s}$, and let $u \in W^{1,2}_0(\Om,d\mu_g)\cap L^\infty(\Om)$ be a solution to \eqref{ourprob} with weight as in \eqref{pesopower} such that 
	\begin{equation}\label{regassu}
		|\nabla u|_g^2 \in L^\infty(\Om) \quad \text{and} \quad \nabla |\nabla u|_g^2 \in L^2(\Om,d\mu_g) \, .
	\end{equation}
	Define 
	\begin{equation*}
		\gamma_1 \coloneqq 1-\sqrt{\frac{n-2}{\alpha+n-2}} \, .
	\end{equation*}
	If \begin{equation}\label{parcond}
		\alpha>0 \quad \text{and} \quad \gamma_1 \leq \gamma <1 \, ,
	\end{equation} 
	then $u$ is radially symmetric, it is given by
	\begin{equation}\label{sol_nc}
		u(x)=\frac{R^2-d_g(x,O)^2}{2(n+\alpha)} \quad \text{in} \ \Om \, ,
	\end{equation}
	and $\Om$ is a ball centered at the origin of radius $R$ with respect to the metric $g$.
\end{theorem}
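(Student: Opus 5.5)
The plan is to run the integral-identity (Pohozaev/$P$-function-free) argument of the compact case, now in the singular conformal setting. The key input is an auxiliary function $V$ satisfying \eqref{V_hp} with $\beta<1$, together with $\ric_f^N\ge 0$ for some $N\le N_\beta=n/\beta$.

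First I compute the model. Writing $r=d_g(x,O)$ (finite since $\gamma<1$, with $r=|x|^{1-\gamma}/(1-\gamma)$), the candidate is $V=-u^*=\frac{r^2-R^2}{2(n+\alpha)}$. I compute $\nabla^2 V$ in the conformal metric $g=|x|^{-2\gamma}\delta$. Radially, $V''(r)=\frac1{n+\alpha}$. Tangentially, the Hessian equals $\frac{V'(r)}{r}\cdot\frac{r\,(\log|x|^{1-\gamma})'}{\ldots}$, which gives the factor $\frac{1}{n+\alpha}(1-\gamma)\cdot\frac{r}{|x|^{1-\gamma}}\cdot(1-\gamma)^{-1}$. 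The point is to check that both eigenvalues are at most $\frac{\beta}{n}$ with $\beta=\frac{n}{n+\alpha}<1$, and that $\Delta^f_g V=1$. Next I compute $\ric$ of the conformal metric from the standard formulas with $\phi=-\gamma\log|x|$, add $\nabla^2 f-\frac{df\otimes df}{N-n}$ with $f=-\alpha\log r$ and $N=n+\alpha$, and verify $\ric_f^{n+\alpha}\ge0$. I expect the resulting quadratic inequality in $(1-\gamma)$ to be exactly $(1-\gamma)^2\ge\frac{n-2}{\alpha+n-2}$, i.e.\ $\gamma\ge\gamma_1$; this is where \eqref{parcond} and $n\ge3$ enter.

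Second, I reproduce the compact-case integral identity. Using Bochner's formula for $\Delta_g^f$ with $\ric_f^N$ and the refined inequality
\[
|\nabla^2u|^2+\ric_f^{\infty}(\nabla u,\nabla u)\ge \frac{(\Delta_g^f u)^2}{N}+\ric_f^N(\nabla u,\nabla u),
\]
I consider $P=|\nabla u|^2+\frac{2}{N}u$, which satisfies $\Delta_g^f P\ge0$. I integrate $u\,\Delta^f_g P$ and use the Pohozaev-type identity built from the field $\nabla V$, together with $\nabla^2V\le\frac\beta n g$ and $u_\nu=-c$, to show that $\int_\Om u\,\Delta^f_gP\,d\mu_g\le 0$. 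Hence equality holds in Cauchy--Schwarz, so $\nabla^2u=-\frac1N g$ with $N=n+\alpha$, and $\nabla u$ is parallel to $\nabla V$ where they are compared.

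The main obstacle is justifying the integrations by parts: $O$ may lie in $\overline\Om$ (the domain may contain the singular point), and only weak regularity is available. I will cut off with $\eta_\eps$ vanishing near $O$, with $|\nabla\eta_\eps|\lesssim 1/\eps$ on $B^g_{2\eps}\setminus B^g_\eps$. The error terms are then of the form $\int |\nabla\eta_\eps|\,(|\nabla u|^2+|u|)|\nabla V|\,d\mu_g$. These tend to zero because $\mu_g(B^g_{2\eps})\sim\eps^{n+\alpha}$ with $n+\alpha>1$, because $|\nabla u|\in L^\infty$ and $|\nabla V|\sim r$, and because \eqref{regassu} controls the terms involving $\nabla|\nabla u|^2$. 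This is exactly why $W^{1,2}_0(\Om,d\mu_g)$ and \eqref{regassu} are assumed. Elliptic regularity away from $O$ gives smoothness in $\Om\setminus\{O\}$, and the boundary terms make sense since $\p\Om$ is compact and $C^{1,s}$.

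Finally, from $\nabla^2 u=-\frac{1}{n+\alpha}g$ on the connected set $\Om\setminus\{O\}$, the function $w=u+V$ has $\nabla^2w=0$ in the direction structure forced by the non-flat conformal metric. Here the tangential Hessian of a radial versus non-radial function differs, and a nonzero parallel field is incompatible with the curvature of $g$ when $\gamma\neq0$, or is handled through $\Delta^f_g w=0$ together with $w$ being constant on $\p\Om$. This shows that $u$ is a function of $r$ alone, hence equal to \eqref{sol_nc}. Since $u=0$ on $\p\Om$, the boundary is a level set $\{r=R\}$, so $\Om=B^g_R$.
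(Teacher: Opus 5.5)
Your proposal is correct and follows the paper's argument. You use the same $V=d_g(x,O)^2/(2(n+\alpha))$ up to a constant, with $\nabla^2V=\frac{1}{n+\alpha}g$, the same Pohozaev identity, the same curvature condition $\ric_g+\nabla^2 f-\frac1\alpha\,df\otimes df\ge0$ (i.e.\ $\ric_f^{n+\alpha}\ge0$), and the cutoff justification of \Cref{lemmadivthm}. Integrating $u\,\Delta_g^fP$ with $P=|\nabla u|_g^2+\frac{2}{n+\alpha}u$ is exactly the reformulation, given at the end of \Cref{compact_case}, of the paper's integration of \eqref{idciraolo} followed by Young's inequality with $k=\frac{\alpha}{n+\alpha}$.

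A few small corrections are needed. The curvature condition reads $(1-\gamma)^2\le\frac{n-2}{\alpha+n-2}$; you wrote $\ge$, although your conclusion $\gamma\ge\gamma_1$ is right. You also need $u>0$, which is \Cref{nonneg}, obtained from the weak formulation. It is required both for the monotone-convergence step and to pass from $\int_\Om u\,\Delta_g^fP\,d\mu_g\le0$ to $\Delta_g^fP\equiv0$.

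In the last step, drop the alternative ``$w$ constant on $\p\Om$'': it is circular, since it presupposes that $\p\Om$ is a level set of $r$. Your parallel-field argument is the right one. Since $\nabla w$ is parallel, $\ric_g(\nabla w,\cdot)=0$. Because $\ric_g$ is positive on vectors orthogonal to $x$ for $0<\gamma<1$ and $n\ge3$, this forces $\nabla w$ to be radial. Then $\nabla w=0$, because $\nabla^2 r=\frac1r(g-dr\otimes dr)$. This argument also covers the endpoint $\gamma=\gamma_1$, where \eqref{secondcond_nc} is vacuous and the paper's passage to \eqref{nablau_nc} is left implicit.
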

\begin{remark}
	Note that the range of admissible parameters in \Cref{main_nc} is always non-degenerate, since $0<\gamma_1<1$ for any $\alpha>0$. Moreover, the compactness of $\p\Om$ ensures that the singularity of the metric $g$ at the origin does not lie on the boundary of $\Om$. Therefore, since $\p\Om \in C^{1,s}$, classical regularity theory implies that $u \in C^\infty(\Om)\cap C^1(\overline \Om^g)$. 
\end{remark}
\begin{remark}\label{domaincpt}
	Note that, since the origin does not belong to the manifold, balls centered at the origin are not compact in $(\mathbb{R}^n \setminus \{O\}, g)$. From \Cref{main_nc}, we have $\Omega = B_R^g$ for some $R>0$. Hence, \Cref{farinaroncoroni} cannot be applied to classify the manifold $M$, since it leads to a contradiction. 
	Indeed, an application of \Cref{farinaroncoroni} implies that $\Omega$ is isometric to a Euclidean ball, which in turn leads to $\gamma = 0$. Since this value does not belong to the range of parameters specified in \eqref{parcond}, this results in a contradiction: no solution $u$ to \eqref{ourprob} can satisfy the assumptions of \Cref{main_nc}. However, \eqref{sol_nc} actually provides a solution to \eqref{ourprob} satisfying the assumptions of \Cref{main_nc}.\\
	This shows that \Cref{farinaroncoroni} cannot be applied in this case. In particular, since the solution $u$ given by \eqref{sol_nc} satisfies all the assumptions of \Cref{farinaroncoroni}, we conclude that the compactness of $\overline{\Omega}^g$ is not only sufficient, but also necessary to obtain a rigidity result for the manifold.
\end{remark}
\par\textbf{Organization of the paper.} The organization of the paper is as follows. In \Cref{compact_case}, we address the compact case. We begin by proving some preliminary results, including properties of solutions to \eqref{ourprob} and a new Pohozaev-type identity. We then prove \Cref{main}. For completeness, we also provide an alternative proof based on the $P$-function method and compare the two approaches.

In \Cref{nccompact_case}, we consider the non-compact case. We first adapt the preliminaries from the compact case to this setting, including a version of the divergence theorem suitable for bounded and non-compact domains. We then prove \Cref{main_nc} using an integral identity approach and discuss the non-applicability of the $P$-function method. Finally, we present a counterexample to \Cref{main_nc}, by providing a Riemannian metric of the form \eqref{g} for which the overdetermined problem \eqref{ourprob} does not have a metric ball as solution.

In \Cref{appendix}, we provide a proof of a local version of Tashiro's theorem, which is used to characterize both the solution and the ambient manifold in the compact case.
\subsection{Notations}\label{notations}	
Given a function $u :\R^n \rightarrow \R$, we denote by $Du=(\p_i u)_{i=1}^d$ and $D^2u=(\p_{ij} u)_{i,j=1}^d$ the Euclidean gradient and the Euclidean Hessian of $u$, respectively. The Euclidean Laplacian of $u$ is denoted by $\lap u$.\\ Let $|\cdot|$ be the Euclidean norm and $a \cdot b$ the scalar product between two points $a$ and $b$ of $\R^n$. Given a vector field $F:\R^n \rightarrow \R^n$, $\divg F=\p_i F^i$ is the divergence of $F$. From now on, we will adopt the Einstein summation convention over repeated indices.
\par Let $(M,g,e^{-f}dV_g)$ be a weighted Riemannian manifold of dimension $n$ and let $u$ be a smooth function on $M$. We denote by $\nabla u = (\nabla^i u)_{i=1}^n$ the Riemannian gradient of $u$. If $F$ is a smooth vector field on $M$, its divergence is defined as
$$\divg_g F = \frac{1}{\sqrt{|g|}}\p_i(\sqrt{|g|}F^i) \, .$$ The Laplacian of $u$ is given by
$$\lap_g u = \divg_g \nabla u $$
and the Hessian of $u$, when seen as a (0,2)-tensor, is defined as follows
$$(\nabla^2 u)_{ij} = \p_{ij}u-\Gamma_{ij}^k\p_k u \, ,$$
where $\Gamma_{ij}^k$ are the Christoffel symbols of the Levi-Civita connection associated with the Riemannian metric $g$. Given a (0,2)-tensor field $Y$ and a vector field $X$, we have that tr$_g(Y)=g^{ij}y_{ij}$ where $Y=(y_{ij})_{i,j=1,\dots,n}$ and $g^{-1}=(g^{ij})_{i,j=1,\dots,n}$, and we set
$$|Y|_g^2 = \mbox{tr}_g(y_{ij}^2)=g^{ij}y_{ij}^2$$
and
\begin{equation*}
	|X|_g^2=g(X, X) \, .
\end{equation*}
\par Let $k\in\mathbb{R}$. We denote by $S^n_k$ the $n$-dimensional space forms of constant sectional curvature $k$. Recall that
$$
S_k^n = \left([0,R_k)\times \Sp,\, dr^2 + \mathrm{sn}_k^2(r)\, g_{\Sp}\right),
$$
where $g_{\Sp}$ denotes the standard round metric on $\Sp$,
$$
\mathrm{sn}_k(r)\coloneq
\begin{cases}
	\dfrac{1}{\sqrt{k}}\sin(\sqrt{k}\,r) & \text{if } k>0,\\[6pt]
	r & \text{if } k=0,\\[6pt]
	\dfrac{1}{\sqrt{-k}}\sinh(\sqrt{-k}\,r) & \text{if } k<0 \, ,
\end{cases}
$$
and
\begin{equation}\label{Rk}
	R_k \coloneq
	\begin{cases}
		\dfrac{\pi}{\sqrt{k}} & \text{if } k>0,\\
		+\infty & \text{if } k\leq 0.
	\end{cases}
\end{equation}
Moreover, $S^n_k$ is the round sphere of radius $1/\sqrt{k}$ if $k>0$, the Euclidean space $\mathbb{R}^n$ if $k=0$, and the hyperbolic space of constant curvature $k$ if $k<0$.
\par Finally, we recall that, for every \( x \in \mathbb{R}^n \), there exists a natural identification of the tangent space \( T_x \mathbb{R}^n \) with \(\mathbb{R}^n\) via the map
\[
x \mapsto x^i \partial_i \,.
\]
Accordingly, the symbol \( x \) will sometimes denote a point in \(\mathbb{R}^n\) and sometimes the corresponding vector field; its meaning will always be clear from the context.
\section{The compact case}\label{compact_case}
\subsection{Preliminary results}
In this section, we establish several preliminary results that will be essential for the proof of \Cref{main}. 
We begin by proving two useful properties of solutions to the problem \eqref{ourprob}. 
\begin{lemma}\label{propsolutions}
	Let $\Om$ be a bounded domain of class $C^1$ and let $u$ be a smooth solution to \eqref{ourprob}.
	Then it holds
	\begin{equation}\label{cweight}
		c=\frac {|\Om|_f}{|\p \Om|_f} \, ,
	\end{equation}
	and
	\begin{equation}\label{intnablau}
		\int_\Om |\nabla u|_g^2 \,d\mu_g = \int_\Om u \,d\mu_g \, .
	\end{equation}
\end{lemma}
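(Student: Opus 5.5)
Both identities follow from the weighted divergence theorem, applied to suitable fields on the bounded $C^1$ domain $\Om$, whose closure we regard as compact in this section. The relevant formula is
\[
\int_\Om \divg_g(e^{-f}X)\,dV_g=\int_{\p\Om} g(X,\nu)\,e^{-f}\,d\sigma_g ,
\]
where $d\sigma_g$ is the induced boundary measure. The weighted perimeter is $|\p\Om|_f=\int_{\p\Om}e^{-f}d\sigma_g$. By \eqref{operator} we have $\divg_g(e^{-f}\nabla w)=e^{-f}\lapw w$. Hence, for any $w$ smooth in $\Om$ and $C^1$ up to the boundary,
\[
\int_\Om \lapw w\,d\mu_g=\int_{\p\Om} w_\nu\,e^{-f}d\sigma_g .
\]

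\textbf{First identity.} Take $w=u$. The left-hand side equals $\int_\Om(-1)\,d\mu_g=-|\Om|_f$. The right-hand side equals $-c\,|\p\Om|_f$ by the overdetermined condition $u_\nu=-c$. Equating the two gives \eqref{cweight}.

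\textbf{Second identity.} Apply the divergence theorem to $X=u\nabla u$. Pointwise,
\[
\divg_g(e^{-f}u\nabla u)=e^{-f}\bigl(|\nabla u|_g^2+u\,\lapw u\bigr)=e^{-f}\bigl(|\nabla u|_g^2-u\bigr).
\]
The boundary term is $\int_{\p\Om}u\,u_\nu e^{-f}d\sigma_g$, which vanishes because $u=0$ on $\p\Om$. Therefore $\int_\Om|\nabla u|_g^2\,d\mu_g-\int_\Om u\,d\mu_g=0$, which is \eqref{intnablau}. Alternatively, one can test the weak formulation \eqref{weak} with $\varphi=u$, after approximating $u$ by $C^\infty_c$ functions.

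\textbf{Regularity.} The only delicate point is justifying the divergence theorem with merely $C^1$ boundary and $u$ smooth in $\Om$. This needs $u\in C^1(\overline\Om^g)$, which the statement's "smooth solution" presumably includes. It also needs $e^{-f}\in C^2(M)$, which is bounded on the compact closure. The divergence theorem on $C^1$ (indeed Lipschitz) domains with $C^1(\overline\Om)$ fields is standard. If needed, I would exhaust $\Om$ by smooth subdomains $\Om_\varepsilon$ and pass to the limit using uniform continuity of $\nabla u$ up to the boundary. This approximation step is the only place where care is required.
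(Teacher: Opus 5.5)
Your proof is correct and follows essentially the same route as the paper. The paper also obtains \eqref{cweight} by integrating $\divg_g(e^{-f}\nabla u)=-e^{-f}$ over $\Om$ and using $u_\nu=-c$. It obtains \eqref{intnablau} by integrating $-u\,\divg_g(e^{-f}\nabla u)$ by parts, with the boundary term killed by $u=0$ on $\p\Om$; your extra remarks on justifying the divergence theorem up to a $C^1$ boundary go beyond what the paper spells out but are consistent with it.
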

\begin{proof}
	Recalling that $u_\nu=-c$ on $\p\Om$, and applying the divergence theorem, we obtain
	$$|\Om|_f=\int_\Om e^{-f}\,dV_g=-\int_\Om \divg_g(e^{-f}\nabla u)\,dV_g=-\int_{\p\Om} e^{-f}g(\nabla u,\nu)\,d\tilde V_g = c\,|\partial\Om|_f \, ,$$
	from which we deduce \eqref{cweight}. Moreover, recalling that $\lapw u=-1$ in $\Om$, another application of the divergence theorem yields
	\begin{align*}
		\int_\Om u e^{-f}\,dV_g &=-\int_\Om u e^{-f} \lapw u \,dV_g\\
		&=-\int_\Om u\divg_g(e^{-f}\nabla u)\,dV_g\\
		&=\int_\Om g(\nabla u,\nabla u)e^{-f}\,dV_g-\int_{\p\Om}ue^{-f}g(\nabla u,\nu)\,d\tilde V_g \, ,
	\end{align*}
	from which we conclude \eqref{intnablau}, since $u=0$ on $\p\Om$. 
\end{proof}
We now establish a Pohozaev-type integral identity that will play a crucial role in the proof of Theorem~\ref{main}.
Recall the classical Pohozaev identity associated with Serrin’s overdetermined problem \eqref{serrin1} -- \eqref{serrin2}:
\begin{equation}\label{pohoclassica}
	\frac{n-2}{2} \int_\Omega |Du|^2 \, dx
	+ \frac{c^2}{2} \int_{\partial \Omega} x \cdot \nu \, d\sigma
	= n \int_\Omega u \, dx \, ,
\end{equation}
which follows by integrating over $\Om$ the differential identity
\begin{equation}\label{pohodiff}
	\divg\left(\frac{|Du|^2}{2}\, x - (x \cdot Du)\, Du\right)
	= \frac{n-2}{2}\, |Du|^2 - (x \cdot Du)\, \Delta u 
\end{equation}
and applying the divergence theorem. Hence, it is clear that the position vector $x$ plays a crucial role in the Euclidean setting. In the following we obtain a Pohozaev-type integral identity on a Riemannian manifold, where the role of the position vector $x$ is replaced by the gradient of $V \in C^2(\Om)\cap C^1(\overline \Om^g)$, with $V$ such that 
$$\lapw V=1 \quad \text{in } \Om \, ,$$
as we see below. We mention that this approach differs from \cite{FreitasRoncoroniSantos} and \cite{AraujoFreitasSantos}, where the authors addressed this issue by assuming the existence of a closed conformal vector field (see also \cite{AndradeFreitasMarin}, where the closedness assumption was removed).
\begin{proposition}\label{pohozaevprop}
	Let $\Om\subset M$ be a bounded domain of class $C^1$ and $u$ a smooth solution to \eqref{ourprob}. Let $V\in C^2(\Om)\cap C^1(\overline \Om^g)$ such that $\lapw V =1$ in $\Om$. Then
	\begin{equation}\label{pohozaev}
		c^2|\Om|_f =\int_\Om u \, d\mu_g +2\int_\Om \nabla^2V(\nabla u,\nabla u) \, d\mu_g \, .
	\end{equation}
\end{proposition}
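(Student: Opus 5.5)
The plan is to mimic the Euclidean derivation \eqref{pohodiff}, replacing the position vector $x$ by $\nabla V$ and working with the weighted divergence. I would introduce the vector field
\[
W \coloneqq e^{-f}\Big(\tfrac{1}{2}|\nabla u|_g^2\,\nabla V - g(\nabla V,\nabla u)\,\nabla u\Big),
\]
compute $\divg_g W$ pointwise in $\Om$, integrate over $\Om$, and use the divergence theorem together with the boundary conditions in \eqref{ourprob} and \Cref{propsolutions}.

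\textbf{Pointwise identity.} Using $\divg_g(e^{-f}\varphi X)=e^{-f}\big(\varphi\,\divg_g X - \varphi\, g(\nabla f,X) + g(\nabla\varphi,X)\big)$, together with $\nabla\big(\tfrac12|\nabla u|_g^2\big)=\nabla^2u(\nabla u,\cdot)$ and $\nabla\big(g(\nabla V,\nabla u)\big)=\nabla^2V(\nabla u,\cdot)+\nabla^2u(\nabla V,\cdot)$, I get
\begin{align*}
\divg_g\Big(e^{-f}\tfrac12|\nabla u|_g^2\nabla V\Big)&=e^{-f}\Big(\tfrac12|\nabla u|_g^2\,\lapw V+\nabla^2u(\nabla u,\nabla V)\Big),\\
\divg_g\big(e^{-f}g(\nabla V,\nabla u)\nabla u\big)&=e^{-f}\Big(g(\nabla V,\nabla u)\lapw u+\nabla^2V(\nabla u,\nabla u)+\nabla^2u(\nabla V,\nabla u)\Big).
\end{align*}
The mixed Hessian terms $\nabla^2u(\nabla u,\nabla V)$ cancel by symmetry of $\nabla^2 u$. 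Inserting $\lapw V=1$ and $\lapw u=-1$ gives
\[
\divg_g W = e^{-f}\Big(\tfrac12|\nabla u|_g^2 + g(\nabla V,\nabla u) - \nabla^2V(\nabla u,\nabla u)\Big).
\]

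\textbf{Integration.} On $\p\Om$ we have $u=0$, so $\nabla u=u_\nu\,\nu=-c\,\nu$. Hence
\[
g(W,\nu)=e^{-f}\Big(\tfrac{c^2}{2}V_\nu - c^2V_\nu\Big)=-\tfrac{c^2}{2}e^{-f}V_\nu .
\]
Moreover, the divergence theorem and $\lapw V=1$ give $\int_{\p\Om}e^{-f}V_\nu\,d\tilde V_g=|\Om|_f$, so the boundary term equals $-\tfrac{c^2}{2}|\Om|_f$.

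For the interior terms:
\begin{itemize}
\item Integrating by parts with $u=0$ on $\p\Om$ and $\lapw V=1$ gives $\int_\Om g(\nabla V,\nabla u)\,d\mu_g=-\int_\Om u\,\lapw V\,d\mu_g=-\int_\Om u\,d\mu_g$.
\item By \eqref{intnablau}, $\int_\Om|\nabla u|_g^2\,d\mu_g=\int_\Om u\,d\mu_g$.
\end{itemize}
Therefore
\[
-\tfrac{c^2}{2}|\Om|_f=-\tfrac12\int_\Om u\,d\mu_g-\int_\Om\nabla^2V(\nabla u,\nabla u)\,d\mu_g ,
\]
and multiplying by $-2$ gives \eqref{pohozaev}.

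\textbf{Main obstacle: regularity.} The only delicate point is justifying the divergence theorem, since $V$ is only $C^2(\Om)\cap C^1(\overline\Om^g)$ and $\p\Om$ is only $C^1$. Second derivatives of $V$ appear in $\divg_g W$ but are not controlled up to the boundary. I would first apply everything on the sets $\Om_\eps=\{u>\eps\}$. Since $|\nabla u|_g=c>0$ on $\p\Om$ and $u\in C^1(\overline\Om^g)$, for small $\eps$ the level set $\{u=\eps\}$ is a regular hypersurface near $\p\Om$, and $\overline{\Om_\eps}\subset\Om$ is compact, so $W$ is $C^1$ there. On $\p\Om_\eps$ one has $\nabla u=-|\nabla u|_g\nu_\eps$. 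The boundary terms
\[
\int_{\p\Om_\eps}e^{-f}\Big(-\tfrac12|\nabla u|_g^2V_{\nu_\eps}\Big)\,d\tilde V_g
\qquad\text{and}\qquad
\int_{\p\Om_\eps}e^{-f}\,u\,g(\nabla V,\nu_\eps)\,d\tilde V_g
\]
converge as $\eps\to0$ by $C^1$-continuity of $u$ and $V$ up to $\p\Om$.

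The interior integrand $\nabla^2V(\nabla u,\nabla u)e^{-f}$ is handled through the identity itself. It equals a combination of integrable terms plus $\divg_g W$, whose integral over $\Om_\eps$ converges. Hence $\int_{\Om_\eps}\nabla^2V(\nabla u,\nabla u)\,d\mu_g$ has a limit as $\eps\to0$, and I interpret the right-hand side of \eqref{pohozaev} as this limit, which is the natural meaning when $\nabla^2V$ is not integrable up to $\p\Om$. Under the bound $\nabla^2 V\le \frac{\beta}{n}g$ of \eqref{V_hp}, monotone convergence makes this interpretation consistent with the Lebesgue integral.
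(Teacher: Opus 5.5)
Your proposal is correct and is essentially the paper's proof. Both use the same weighted vector field and the same pointwise identity \eqref{divthmpoho1}, substitute $\nabla u=-c\nu$ on $\p\Om$, and close with \eqref{poho2}, \eqref{poho3} and \eqref{intnablau}. Your exhaustion by $\{u>\eps\}$ is an extra justification that the paper omits, since it applies the divergence theorem directly; it is sound, but make explicit that it relies on $u>0$ in $\Om$ (maximum principle) and on compactness of $\overline\Om^g$, so that for small $\eps$ the level set $\{u=\eps\}$ lies in a collar of $\p\Om$ where $|\nabla u|_g\ge c/2$.
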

\begin{proof}
	The proof follows as in the usual Pohozaev identity. We begin by noting that the divergence theorem yields
	\begin{equation}\label{poho2}
		\int_{\p\Om} g(\nabla V,\nu) \, d\tilde\mu_g =\int_\Om \divg_g (e^{-f}\nabla V) \, dV_g = \int_\Om \lapw V \, d\mu_g \, .
	\end{equation}
	Next, we compute
	\begin{equation}\label{poho3}
		\int_\Om g(\nabla V,\nabla u) \, d\mu_g =\int_\Om \divg_g(e^{-f}u\nabla V)\, dV_g-\int_\Om e^{-f}u \, \lapw V \, dV_g= -\int_\Om u \, \lapw V \, d\mu_g \, ,
	\end{equation}
	where we used the fact that $u=0$ on $\p\Om$. Now we consider the following differential identity
	\begin{equation}
		\begin{aligned}\label{diffpoho}
			\begin{split}
				&\divg_g\left(e^{-f}\frac{|\nabla u|_g^2}{2}\nabla V-e^{-f} g(\nabla V,\nabla u)\nabla u\right)\\
				&=e^{-f}\left\{\frac{|\nabla u|_g^2}{2}\lapw V+g(\nabla \left(\frac{|\nabla u|_g^2}{2}\right),\nabla V)-g(\nabla V,\nabla u)\lapw u-g(\nabla(g(\nabla V,\nabla u)),\nabla u)\right\} \quad \text{in } \Om\, .
			\end{split}
		\end{aligned}
	\end{equation}
	A straightforward computation shows that
	$$g(\nabla \left(\frac{|\nabla u|_g^2}{2}\right),\nabla V)-g(\nabla(g(\nabla V,\nabla u)),\nabla u) =-\nabla^2V(\nabla u,\nabla u)\, ,$$
	and thus, using $\lapw u=-1$ in $\Om$, \eqref{diffpoho} simplifies to
	\begin{equation}\label{divthmpoho1}
		\divg_g\left(e^{-f}\frac{|\nabla u|_g^2}{2}\nabla V-e^{-f} g(\nabla V,\nabla u)\nabla u\right)\\
		=e^{-f}\left\{\frac{|\nabla u|_g^2}{2}\lapw V+g(\nabla V,\nabla u)-\nabla^2 V(\nabla u,\nabla u)\right\} \, .
	\end{equation}
	By integrating over $\Om$, applying the divergence theorem and using the fact that $u=0$ on $\p\Om$, we obtain
	$$\int_{\p\Om} g(\frac{|\nabla u|_g^2}{2}\nabla V- g(\nabla V,\nabla u)\nabla u,\nu) \, d\tilde\mu_g =\int_\Om \left\{\frac{|\nabla u|_g^2}{2}\lapw V+g(\nabla V,\nabla u)-\nabla^2 V(\nabla u,\nabla u)\right\} \, d\mu_g \, .$$
	Recalling that $\nabla u=-c\nu$ on $\p\Om$, we get
	\begin{equation}\label{poho1}
		-\frac{c^2}{2}\int_{\p\Om} g(\nabla V,\nu) \, d\tilde\mu_g =\int_\Om \frac{|\nabla u|_g^2}{2}\lapw V \, d\mu_g +\int_\Om g(\nabla V,\nabla u) \, d\mu_g -\int_\Om \nabla^2V(\nabla u,\nabla u) \, d\mu_g \, .
	\end{equation}
	Hence, substituting \eqref{poho2} and \eqref{poho3} into \eqref{poho1}, we deduce that 
	$$-\frac{c^2}{2}\int_\Om \lapw V \, d\mu_g =\int_\Om \frac{|\nabla u|_g^2}{2}\lapw V \, d\mu_g -\int_\Om u \, \lapw V \, d\mu_g -\int_\Om \nabla^2 V(\nabla u,\nabla u) \, d\mu_g \, .$$
	Since $\lapw V=1$, we have that
	$$-\frac{c^2}{2}|\Om|_f =\frac 1 2 \int_\Om |\nabla u|_g^2 \, d\mu_g -\int_\Om u \, d\mu_g -\int_\Om \nabla^2 V(\nabla u,\nabla u) \, d\mu_g \, .$$
	Recalling from \Cref{propsolutions} that 
	$$\int_\Om u\, d\mu_g =\int_\Om |\nabla u|_g^2 \, d\mu_g \, ,$$
	we get
	$$-\frac{c^2}{2}|\Om|_f =-\frac 1 2 \int_\Om u \, d\mu_g -\int_\Om \nabla^2 V(\nabla u,\nabla u) \, d\mu_g \, ,$$
	which concludes the proof.
\end{proof}
The proof of \Cref{main} is based on the Pohozaev identity \eqref{pohozaev} and on the following differential identity, which was proved in \cite[Proposition 3.2]{CiraoloCorso} 
\begin{lemma}
	Let $u$ be a $C^3$ function. It holds
	\begin{equation}
		\begin{split}
			&e^f\divg_g \left[e^{-f}\left(u\nabla \left(\frac{|\nabla u|_g^2}{2}\right)-u\,\lapw u \nabla u-\frac 1 2 |\nabla u|_g^2\nabla u\right)\right]\\
			&=u\left[|\nabla ^2u|_g^2-(\lapw u)^2\right]-\frac 3 2 |\nabla u|^2\lapw u+u\,(\ric_g +\nabla ^2f)[\nabla u,\nabla u].
		\end{split}\label{idciraolo}
	\end{equation}
\end{lemma}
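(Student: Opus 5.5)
The plan is a direct pointwise computation. Set
$$X \coloneq u\,\nabla\Big(\frac{|\nabla u|_g^2}{2}\Big)-u\,\lapw u\,\nabla u-\frac12|\nabla u|_g^2\nabla u ,$$
and write $\divg_f X\coloneq e^f\divg_g(e^{-f}X)=\divg_g X-g(\nabla f,X)$ for the weighted divergence. Two tools do all the work. The first is the Leibniz rule $\divg_f(\varphi Y)=g(\nabla\varphi,Y)+\varphi\,\divg_f Y$, together with $\divg_f\nabla w=\lapw w$. The second is the weighted Bochner formula
$$\lapw\Big(\frac{|\nabla u|_g^2}{2}\Big)=|\nabla^2u|_g^2+g(\nabla u,\nabla\lapw u)+(\ric_g+\nabla^2 f)[\nabla u,\nabla u].$$
Only $C^3$ regularity of $u$ is needed, since at most third derivatives of $u$ appear.

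\textbf{Step 1: the weighted Bochner formula.} I would derive it from the classical Bochner formula $\Delta_g(\tfrac12|\nabla u|_g^2)=|\nabla^2u|_g^2+g(\nabla u,\nabla\Delta_g u)+\ric_g(\nabla u,\nabla u)$. On the left-hand side, $\lapw(\tfrac12|\nabla u|_g^2)=\Delta_g(\tfrac12|\nabla u|_g^2)-\nabla^2u(\nabla f,\nabla u)$. On the right-hand side, differentiating $\lapw u=\Delta_g u-g(\nabla f,\nabla u)$ gives
$$g(\nabla u,\nabla\lapw u)=g(\nabla u,\nabla\Delta_g u)-\nabla^2 f(\nabla u,\nabla u)-\nabla^2u(\nabla f,\nabla u).$$
Substituting these two relations into the classical formula, the terms $\nabla^2u(\nabla f,\nabla u)$ cancel and the term $\nabla^2 f[\nabla u,\nabla u]$ appears, which gives the claimed formula. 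This is the only step with real content, and the main point to watch is the sign bookkeeping in this cancellation.

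\textbf{Step 2: expand $\divg_f X$ term by term} using the Leibniz rule.

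The first term gives
$$g\Big(\nabla u,\nabla\tfrac{|\nabla u|_g^2}{2}\Big)+u\,\lapw\Big(\tfrac{|\nabla u|_g^2}{2}\Big).$$

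The second term gives
$$-|\nabla u|_g^2\lapw u-u\,g(\nabla\lapw u,\nabla u)-u(\lapw u)^2.$$

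The third term gives
$$-g\Big(\nabla\tfrac{|\nabla u|_g^2}{2},\nabla u\right)-\tfrac12|\nabla u|_g^2\lapw u.$$

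\textbf{Step 3: insert Bochner and collect.} Inserting Step 1 into the first term, the cancellations are as follows. The two $g(\nabla u,\nabla\frac{|\nabla u|_g^2}{2})$ terms cancel. The $u\,g(\nabla u,\nabla\lapw u)$ term from Bochner cancels against the corresponding term coming from the second piece. What remains is
$$u\big[|\nabla^2u|_g^2-(\lapw u)^2\big]-\tfrac32|\nabla u|_g^2\lapw u+u(\ric_g+\nabla^2 f)[\nabla u,\nabla u],$$
which is exactly \eqref{idciraolo}.
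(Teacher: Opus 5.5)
Your computation is correct: the Leibniz expansion of the three pieces, the cancellation of the two $g(\nabla u,\nabla\frac{|\nabla u|_g^2}{2})$ terms and of the $u\,g(\nabla u,\nabla\lapw u)$ terms, and your derivation of the weighted Bochner formula from the classical one (where the $\nabla^2u(\nabla f,\nabla u)$ terms cancel) all check out. The only implicit extra hypothesis is $f\in C^2$, which the paper's assumption $e^{-f}\in C^2$ supplies.

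The paper gives no proof of its own and simply imports the identity from \cite[Proposition 3.2]{CiraoloCorso}. Your route through the Bochner--Weitzenb\"ock formula \eqref{bochnweitz} is the standard one. It is also exactly the relation the paper later uses to rewrite \eqref{idciraolo} as \eqref{idcP}.
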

\noindent Moreover, we establish an Obata-type result, which we will use directly to deduce the rigidity of both the solutions and the domain of \eqref{ourprob}. We defer its proof to \Cref{appendix}.
	\begin{lemma}\label{farinaroncoroni}
	Let $(M,g)$ be an $n$-dimensional Riemannian manifold and let $\Om$ be a domain in $M$ such that $\overline\Om^g$ is compact. Assume that there exists $u \in C^0(\overline\Om^g)\cap C^2(\Om)$ solution to 
	\begin{equation}\label{eqhessu}
		\begin{cases}
			\nabla^2 u = -\left(\frac 1 n +ku\right)g & \text{in} \ \Om \, ,\\
			u>0 & \text{in} \ \Om \, ,\\
			u=0 & \text{on} \ \p\Om \, ,
		\end{cases}
	\end{equation}
	where $k \in \R$. Then $u$ is radial and $\Om$ is a metric ball $B_R^g(p)$. Moreover, $\Om$ is isometric to a metric ball in the space form $S_k^n$, and $u$ is given by
	\begin{equation}\label{ufin}
		u(r)=\begin{cases}
			\frac{\cos(\sqrt{k}r)}{kn \cos(\sqrt{k}R)}-\frac{1}{nk} & \text{if } k>0\, ,\\
			\frac{R^2-r^2}{2n} & \text{if } k=0 \, ,\\
			\frac{\cosh(\sqrt{-k}r)}{kn\cosh(\sqrt{-k}R)}-\frac{1}{nk} & \text{if } k<0 \, .
		\end{cases} 
	\end{equation}
\end{lemma}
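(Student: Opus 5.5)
The proof of \Cref{farinaroncoroni} is a local version of the Tashiro--Obata argument. Our strategy is to show that $\Om$ is swept out by unit-speed geodesics emanating from a single maximum point $p$ of $u$, that $u$ depends only on $r=d_g(\cdot,p)$, and that the metric in polar coordinates around $p$ is $dr^2+\mathrm{sn}_k^2(r)g_{\Sp}$.

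\textbf{Step 1: a maximum point and the ODE along geodesics.} Since $\overline\Om^g$ is compact, $u\in C^0(\overline\Om^g)$, $u>0$ in $\Om$ and $u=0$ on $\p\Om$, the function $u$ attains a positive maximum at some interior point $p$, where $\nabla u(p)=0$. Let $\sigma(t)=\exp_p(t\theta)$, with $\theta\in T_pM$ a unit vector, be a geodesic as long as it stays in $\Om$. Then $h(t)=u(\sigma(t))$ satisfies
\[
h''=\nabla^2u(\dot\sigma,\dot\sigma)=-\tfrac1n-kh,\qquad h(0)=u(p),\quad h'(0)=0.
\]
Hence $h=\psi(t)$, where $\psi$ is the explicit radial profile, independent of $\theta$. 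In particular, $u$ is strictly decreasing along every such geodesic until it reaches the level $0$, which happens at a time $R$ depending only on $u(p)$ and $k$. For instance, if $k=0$ then $\psi(t)=u(p)-t^2/(2n)$ and $R=\sqrt{2nu(p)}$.

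\textbf{Step 2: $\Om=B_R^g(p)$.} The main obstacle is to show that each geodesic from $p$ stays in $\Om$ for $t<R$, and that every point of $\Om$ is reached by such a geodesic. For the first part, note that along $\sigma$ we have $u=\psi(t)>0$ for $t<R$, so $\sigma$ cannot hit $\p\Om$ (where $u=0$) before time $R$. Completeness of $\overline\Om^g$ lets us extend $\sigma$, since geodesics can only stop by leaving through the boundary. This gives $B_R^g(p)\subset\Om$, using that $\exp_p$ is defined on the ball of radius $R$ and takes values in $\Om$. For the converse, we use that $\nabla u\neq0$ away from $p$ on $\Om$. This holds because $\nabla u$ is parallel to $\dot\sigma$ along radial geodesics: the vector field $\nabla u/\psi'$ satisfies $\nabla_{\nabla u}\nabla u=-(\tfrac1n+ku)\nabla u$, so integral curves of $\nabla u$ are reparametrized geodesics. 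Starting from any $x\in\Om$, we follow the gradient flow upward. Since $\overline\Om^g$ is compact it converges to a critical point, and critical points in $\Om$ are isolated nondegenerate maxima because $\nabla^2u<0$ wherever $u>0$ (and $\tfrac1n+ku>0$ on the relevant range). Connectedness of $\Om$ together with the fact that the basin of attraction of each maximum is open forces a unique maximum $p$. The flow line through $x$ is then a radial geodesic from $p$, so $x\in B_R^g(p)$ and $u(x)=\psi(d_g(x,p))$. Thus $u$ is radial and $\Om=B_R^g(p)$.

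\textbf{Step 3: the metric.} In geodesic polar coordinates on $B_R^g(p)\setminus\{p\}$ we have $u=\psi(r)$, so
\[
\nabla^2u=\psi''\,dr^2+\psi'\,\nabla^2r .
\]
Comparing this with $-(\tfrac1n+k\psi)g=\psi''g$ gives $\nabla^2 r=\frac{\psi''}{\psi'}(g-dr^2)$. Writing $g=dr^2+g_r$, this becomes $\p_r g_r=2\frac{\psi''}{\psi'}g_r$, whence $g_r=(\psi'(r))^2 C\, g_{\Sp}$. The behaviour as $r\to0$, where $g_r\sim r^2g_{\Sp}$, fixes the constant and yields $\psi'\propto \mathrm{sn}_k$ and $g=dr^2+\mathrm{sn}_k^2(r)g_{\Sp}$, the space form metric. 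Here we check that $\psi'$ is a multiple of $\mathrm{sn}_k$, which is immediate from the ODE in Step 1. Imposing $\psi(R)=0$ finally gives the formulas \eqref{ufin}.
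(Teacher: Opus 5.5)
Your Step 2 is a reasonable self-contained substitute for the paper's appeal to Farina--Roncoroni (Lemma 6 and Proposition 12). Every critical point is a nondegenerate maximum, gradient-flow basins are open, and connectedness then gives a unique critical point $p$ with $\Om=\exp_p(B(0,R))$. The genuine gap is in Step 3. There you write $g=dr^2+g_r$ in geodesic polar coordinates on all of $B_R^g(p)\setminus\{p\}$ and integrate $\p_r g_r=2\frac{\psi''}{\psi'}g_r$ from $r=0$ to $r=R$. This presupposes that $\exp_p\colon B(0,R)\setminus\{0\}\to B_R^g(p)\setminus\{p\}$ is a diffeomorphism, i.e.\ $\inj(p)\ge R$. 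Nothing you have proved excludes conjugate points along the radial geodesics, or two distinct radial geodesics meeting before time $R$; Step 2 only gives surjectivity of $\exp_p$. Without this, your computation is valid only on $B^g_\varepsilon(p)$ with $\varepsilon=\min\{R,\inj(p)\}$, and it does not show that $\Om$ is isometric to a ball in $S^n_k$. This is exactly the content of the paper's Step 3. Having $g=dr^2+\mathrm{sn}_k^2(r)\overline g$ on $B^g_\varepsilon(p)$, it assumes $\inj(p)<R$. It rules out a conjugate point at radius $\inj(p)$ because $\mathrm{sn}_k(\inj(p))\neq0$. It rules out two geodesics from $p$ arriving at one point with opposite velocities because both velocities must equal $-\nabla u/|\nabla u|_g$ there.

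Within your framework the repair is short. It rests on the identity $\nabla u(\gamma_v(s))=\psi'(s)\dot\gamma_v(s)$ along every radial geodesic $\gamma_v(s)=\exp_p(sv)$. Your stated reason for it does not work: knowing that integral curves of $\nabla u$ are reparametrized geodesics does not tell you they are the geodesics $\exp_p(t\theta)$ you started from. Instead, observe that $\nabla_{\dot\gamma_v}\bigl(\nabla u-\psi'\dot\gamma_v\bigr)=\psi''\dot\gamma_v-\psi''\dot\gamma_v=0$ and that this field vanishes at $s=0$.

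Injectivity then follows. If $\exp_p(sv)=\exp_p(s'w)$, strict monotonicity of $\psi$ gives $s=s'$, the identity gives $\dot\gamma_v(s)=\dot\gamma_w(s)$, and so $v=w$.

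For conjugate points, differentiate the identity along a variation $\exp_p(sv(\tau))$ with $|v(\tau)|=1$. This gives $\psi'J'=\psi''J$ for the normal Jacobi field $J$, whence $J(s)=\mathrm{sn}_k(s)\,P_s\bigl(J'(0)\bigr)$ with $P_s$ parallel transport. This excludes conjugate points on $(0,R)$ (recall $R<\pi/(2\sqrt k)$ if $k>0$) and at the same time yields $\exp_p^*g=ds^2+\mathrm{sn}_k^2(s)\overline g$.

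Finally, you also need to identify the geodesic parameter with $d_g(\cdot,p)$, both for $u=\psi(d_g(\cdot,p))$ and for $B_R^g(p)\subset\Om$. This deserves a line, for instance the local Hopf--Rinow fact: if $\exp_p$ is defined on $B(0,R)$, every point at distance less than $R$ from $p$ is joined to $p$ by a minimizing geodesic. Combine this with the strict monotonicity of $\psi$.
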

A combination of the generalized Bochner identity, the Cauchy-Schwarz inequality and the elementary inequality 
\begin{equation}\label{cs1}
	(s+t)^2\geq \frac{s^2}{1+\alpha}-\frac{t^2}{\alpha} \quad \alpha>0 \,, \quad s,t \in \R \, ,
\end{equation}
yields the following Bochner-type inequality, as stated in \cite{Ledoux}. It  will play a crucial role in the proof of \Cref{main} via the $P$-function method and, for the reader’s convenience, we report its proof.
\begin{lemma}\label{weighted_Bochner}
	Let $(M,g,e^{-f}dV_g)$ be a weighted Riemannian manifold, $u \in C^3(M)$ and $m>n=dim(M)$. Then
	\begin{equation}\label{weighted_Bochnineq}
		\frac 1 2 \lapw |\nabla u|_g^2 \geq \frac{(\lapw u)^2}{m}+g(\nabla u,\nabla \lapw u)+\ric_f^m(\nabla u,\nabla u) \, ,
	\end{equation}
	where equality holds if and only if
	\begin{equation}\label{bochneq}
		\nabla^2 u=\frac{\lap_g u}{n}g
	\end{equation}
	and
	\begin{equation}\label{bochneq1}
		\lapw u=-\frac{m}{m-n} g(\nabla f,\nabla u) \, .
	\end{equation}
\end{lemma}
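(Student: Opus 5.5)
We start from the generalized Bochner formula for the weighted Laplacian,
\[
\frac12 \lapw |\nabla u|_g^2 = |\nabla^2 u|_g^2 + g(\nabla u,\nabla \lapw u) + \ric_f^\infty(\nabla u,\nabla u),
\]
which follows from the classical Bochner identity $\frac12\Delta_g|\nabla u|_g^2=|\nabla^2u|_g^2+g(\nabla u,\nabla\Delta_g u)+\ric(\nabla u,\nabla u)$. We subtract $\frac12 g(\nabla f,\nabla|\nabla u|_g^2)=\nabla^2 u(\nabla f,\nabla u)$ from it. We also use $g(\nabla u,\nabla\Delta_g u)=g(\nabla u,\nabla\lapw u)+\nabla^2 f(\nabla u,\nabla u)+\nabla^2u(\nabla f,\nabla u)$. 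Since $\ric_f^m=\ric_f^\infty-\frac{df\otimes df}{m-n}$, it then suffices to show
\[
|\nabla^2u|_g^2 \ge \frac{(\lapw u)^2}{m} - \frac{g(\nabla f,\nabla u)^2}{m-n}.
\]

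For this we use Cauchy--Schwarz on the traceless part: $|\nabla^2 u|_g^2\ge \frac{(\Delta_g u)^2}{n}$, with equality iff $\nabla^2u=\frac{\Delta_g u}{n}g$, which is \eqref{bochneq}. Next we write $\Delta_g u = \lapw u + g(\nabla f,\nabla u)$ and apply \eqref{cs1} with $s=\lapw u$, $t=g(\nabla f,\nabla u)$ and $\alpha=\frac{m-n}{n}>0$ (this is where $m>n$ is used). Then $1+\alpha=\frac mn$, and we get
\[
\frac{(\Delta_g u)^2}{n}\ge \frac{(\lapw u)^2}{m}-\frac{g(\nabla f,\nabla u)^2}{m-n}.
\]
Combining the two inequalities gives \eqref{weighted_Bochnineq}.

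For the equality case, we check when \eqref{cs1} is sharp. The inequality is equivalent to $\bigl(\sqrt{\tfrac{\alpha}{1+\alpha}}\,s+\sqrt{\tfrac{1+\alpha}{\alpha}}\,t\bigr)^2\ge0$. So equality holds iff $\alpha s=-(1+\alpha)t$, that is, $\frac{m-n}{n}\lapw u=-\frac mn g(\nabla f,\nabla u)$, which is \eqref{bochneq1}. Equality in \eqref{weighted_Bochnineq} requires equality in both steps, since each step is an inequality in the same direction. Conversely, \eqref{bochneq} and \eqref{bochneq1} together give equality.

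The only delicate point is the bookkeeping of the $\nabla^2u(\nabla f,\nabla u)$ and $\nabla^2 f(\nabla u,\nabla u)$ terms when converting the unweighted Bochner identity to the weighted one. Everything else is elementary.
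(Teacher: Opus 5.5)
Your proof is correct and follows the paper's own route: the weighted Bochner--Weitzenb\"ock formula, then $|\nabla^2 u|_g^2\ge (\Delta_g u)^2/n$, then \eqref{cs1} with $s=\lapw u$, $t=g(\nabla f,\nabla u)$ and $m=n(1+\alpha)$, with equality exactly when both steps are sharp. The differences are cosmetic. You derive the weighted identity from the classical Bochner formula instead of quoting Bakry, and you spell out the equality case of \eqref{cs1} via the perfect square, which the paper leaves implicit. You also correctly write $|\nabla^2 u|_g^2$, where the paper's \eqref{bochnweitz} drops the square.
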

\begin{proof}
	Let 
	$$\Gamma_2(u,u)\coloneq \frac 1 2 \lapw |\nabla u|_g^2 -g(\nabla \lapw u,\nabla u) \, .$$
	From Bakry \cite{Bakry}, the generalized Bochner-Weitzenb\"{o}ck formula holds
	\begin{equation}\label{bochnweitz}
		\Gamma_2(u,u)=|\nabla^2 u|_g+\ric_f^\infty(\nabla u,\nabla u) \, ,
	\end{equation}
	where $\ric_{f}^\infty$ is given by \eqref{Ricinf}.
	Since 
	\begin{equation}\label{cs}
		|\nabla^2 u|_g\geq \frac 1 n (\lap_g u)^2 \, ,
	\end{equation}
	and using the elementary inequality \eqref{cs1}
	with $s=\lapw u$ and $t=g(\nabla f,\nabla u)$,
	from \eqref{bochnweitz} we obtain
	\begin{align*}
		\Gamma_2(u,u)&\geq \frac 1 n (\lap_g u)^2+\ric_f^\infty(\nabla u,\nabla u)\\
		&=\frac 1 n (\lapw u+g(\nabla f,\nabla u))^2+\ric_f^\infty(\nabla u,\nabla u)\\
		&\geq \frac{1}{n(1+\alpha)}(\lapw u)^2 - \frac{(df \otimes df)(\nabla u,\nabla u)}{n\alpha} +\ric_f^\infty(\nabla u,\nabla u) \, ,
	\end{align*}
	where we have identified $df\otimes df(\nabla u,\nabla u)$ with $g(\nabla f,\nabla u)^2$.
	Setting $m\coloneq n(1+\alpha)$, we conclude that 
	$$\Gamma_2(u,u)\geq \frac 1 m (\lapw u)^2 +\ric_f^m(\nabla u,\nabla u) \,,$$
	where $\ric_f^m$ is given by \eqref{Ricmf}.
	Since $\alpha>0$ is arbitrary, by the definition of $\Gamma_2(u,u)$ follows \eqref{weighted_Bochnineq}. Moreover, equality holds if and only if equality is attained in both \eqref{cs} and \eqref{cs1}, yielding \eqref{bochneq} and \eqref{bochneq1}, respectively.
\end{proof}
\subsection{Proof of \Cref{main}}
\hfill\\We are now ready to prove \Cref{main}.
\begin{proof}[Proof of \Cref{main}]
	Note that, since $\lapw u=-1$ in $\Om$ and $u=0$ on $\p\Om$, the strong maximum principle implies that $u$ is positive in $\Om$. Moreover, we observe that in $\Om$ it holds
	\begin{align}
		& |\nabla ^2u|_g^2-(\lapw u)^2 \nonumber\\
		&= |\nabla ^2 u|_g^2-(\lap_g u-g(\nabla f,\nabla u))^2\nonumber\\
		&=\left|\nabla ^2 u-\frac{\lap_g u}{n}g\right|_g^2+2\,g(\nabla f,\nabla u)\lap_g u-g(\nabla f,\nabla u)^2-\frac{n-1}{n}(\lap_g u)^2\, . \nonumber 
	\end{align}
	By using the identity
	$$\lap_g u= \lapw u+g(\nabla f,\nabla u) \, ,$$
	we deduce that
	\begin{equation}\label{id}
		\begin{aligned}
			& |\nabla ^2u|_g^2-(\lapw u)^2\\
			&=\left|\nabla ^2 u-\frac{\lap_g u}{n}g\right|_g^2-\frac{n-1}{n}(\lapw u)^2 +\frac 2 n \,g(\nabla f,\nabla u)\lapw u+\frac 1 n \, g(\nabla f,\nabla u)^2 \, .
		\end{aligned}
	\end{equation}
	Thus, integrating identity \eqref{idciraolo} and using \eqref{id}, we obtain
	\begin{equation*}
		\begin{split}
			&\int_\Om \divg_g \Big\{e^{-f}\left(u\nabla \left(\frac{|\nabla u|_g^2}{2}\right)-u \, \lapw u \nabla u-\frac{|\nabla u|_g^2}{2}\nabla u\right)\Big\}\,dV_g\\
			&=\int_\Om e^{-f}u\,\Big\{|\nabla ^2 u-\frac{\lap_g u}{n}g|_g^2-\frac{n-1}{n}(\lapw u)^2+\frac 2 n \, g(\nabla f,\nabla u)\lapw u +\frac 1 n \, g(\nabla f,\nabla u)^2 \Big\}\, dV_g\\
			&+\int_\Om e^{-f}\Big\{-\frac 3 2 |\nabla u|_g^2 \, \lapw u + u \, (\ric_g+\nabla ^2 f)[\nabla u,\nabla u]\Big\}\, dV_g \, .
		\end{split}
	\end{equation*}
	Since $u$ is such that $\lapw u=-1$ in $\Om$ and $u=0$ on $\p\Om$, from the divergence theorem we find
	\begin{align}
		-\int_{\p\Om} e^{-f} \frac{|\nabla u|_g^2}{2}\, g(\nabla u,\nu)\,d\tilde V_g &= \int_\Om e^{-f} \Big( -\frac{n-1}{n}\,u+\frac 3 2 |\nabla u|_g^2\Big)\, dV_g \nonumber\\
		&\begin{multlined}[t]+\int_\Om e^{-f} u \, \Big\{ \Big|\nabla ^2 u-\frac{\lap_g u}{n}g\Big|_g^2 +\frac 2 n \,g(\nabla f,\nabla u)\lapw u \\+\frac 1 n \,g(\nabla f,\nabla u)^2 + (\ric_g+\nabla ^2 f)[\nabla u,\nabla u]\Big\}\, dV_g \, . \label{prima}
		\end{multlined} 
	\end{align}
	Using \eqref{cweight} and the fact that $u_\nu =-c$ on $\p\Om$, the left-hand side of \eqref{prima} simplifies to $\frac {c^2}{2}|\Om|_f$. Moreover, from \eqref{intnablau}, we have that
	\begin{align*}
		\int_\Om \Big( -\frac{n-1}{n}u+\frac 3 2 |\nabla u|_g^2 \Big) \, d\mu_g = \frac{n+2}{2n}\int_\Om u\,d\mu_g \, .
	\end{align*}
	Substituting into \eqref{prima}, we obtain
	\begin{equation}\label{seconda}
		\begin{multlined}
			\frac{c^2}{2}|\Omega|_f = \frac{n+2}{2n} \int_\Omega u \, d\mu_g \\
			+ \int_\Omega u \, \Big\{ 
			\left|\nabla^2 u - \frac{\Delta_g u}{n} g \right|_g^2 
			+ \frac{2}{n} g(\nabla f, \nabla u)\lapw u
			+ \frac{1}{n} g(\nabla f, \nabla u)^2 
			+ (\mathrm{Ric}_g + \nabla^2 f)[\nabla u, \nabla u] 
			\Big\} \, d\mu_g \, . 
		\end{multlined}
	\end{equation}
	Next, applying the Pohozaev identity \eqref{pohozaev}, we get 
	\begin{equation}\label{postpoho}
		\begin{multlined}
			0=\frac 1 n \int_\Om u \, d\mu_g - \int_\Om \nabla^2 V(\nabla u,\nabla u) \, d\mu_g \\+ \int_\Omega u \, \Big\{ 
			\left|\nabla^2 u - \frac{\Delta_g u}{n} g \right|_g^2 
			+ \frac{2}{n} g(\nabla f, \nabla u)\lapw u
			+ \frac{1}{n} g(\nabla f, \nabla u)^2 
			+ (\mathrm{Ric}_g + \nabla^2 f)[\nabla u, \nabla u] 
			\Big\} \, d\mu_g \, . 
		\end{multlined}
	\end{equation}
	Using the assumption
	\begin{equation}\label{hessV}
		\nabla^2V\leq \frac \beta n g \, ,
	\end{equation}
	we obtain
	$$\int_\Om \nabla^2V(\nabla u,\nabla u)\, d\mu_g \leq \frac \beta n\int_\Om |\nabla u|_g^2 \, d\mu_g =\frac \beta n \int_\Om u \, d\mu_g \, ,$$
	where the last equality follows from \eqref{intnablau}. Hence, from \eqref{postpoho} we get 
	\begin{equation}\label{princ}
		\begin{multlined}
			0\geq \left(\frac{1-\beta}{n}\right)\int_\Om u \, d\mu_g  \\+ \int_\Omega u \, \Big\{ 
			\left|\nabla^2 u - \frac{\Delta_g u}{n} g \right|_g^2 
			+ \frac{2}{n} g(\nabla f, \nabla u)\lapw u
			+ \frac{1}{n} g(\nabla f, \nabla u)^2 
			+ (\mathrm{Ric}_g + \nabla^2 f)[\nabla u, \nabla u] 
			\Big\} \, d\mu_g \, . 
		\end{multlined}
	\end{equation}
	Now, we want to bound the right-hand side of \eqref{princ} from below by zero. For this, we distinguish between the cases $0<\beta < 1$ and $\beta = 1$.\smallskip\\
	\indent \textbf{Case} $\bm{0<\beta<1:}$ 
	By applying the weighted Young inequality $$2|ab|\leq k a^2+\frac 1 k b^2 \quad \text{with} \quad a=\frac{1}{\sqrt n}\lapw u \ \text{and} \ b=\frac{1}{\sqrt n} g(\nabla f,\nabla u) \quad \text{in} \ \Om \, ,$$ we find
	\begin{equation}\label{youngw}
		\frac 2 n \, g(\nabla f,\nabla u)\lapw u \geq -\frac k n (\lapw u)^2-\frac{1}{kn}g(\nabla f,\nabla u)^2 \quad \text{in} \ \Om\, ,
	\end{equation}
	where $k>0$ will be chosen appropriately. Note that the equality is attained if and only if $k \cdot \lapw u = -g(\nabla f,\nabla u)$, i.e., $k=g(\nabla f,\nabla u$).\\
	Plugging \eqref{youngw} into  \eqref{princ}, we obtain
	\begin{equation}\label{postyoung}
		\begin{split}
			0 &\geq \left(\frac{1-k-\beta}{n}\right)\int_\Om u\, d\mu_g+\int_\Om u\, \Big|\nabla ^2 u-\frac{\lap_g u}{n}g\Big|_g^2 \, d\mu_g \\
			&+\int_\Om u\, \Big\{\frac 1 n\Big(1-\frac{1}{k}\Big)\,g(\nabla f,\nabla u)^2+(\ric_g +\nabla^2f)[\nabla u,\nabla u]\Big\}\,d\mu_g \, . 
		\end{split}
	\end{equation}
	Now, we choose $k > 0$ such that 
	$$\frac{1-k-\beta}{n}=0 \, ,$$
	that is
	\begin{equation}\label{k}
		k=1-\beta \, .
	\end{equation}
	Recall that $k$ has to be positive, as it represents the coefficient used in the Young inequality \eqref{youngw}. In the case under consideration, we have $\beta<1$, so this condition is indeed satisfied. From \eqref{postyoung}, and observing that $$\frac 1 n \left(1-\frac 1 k\right)=-\frac{\beta}{n(1-\beta)} \quad \text{and} \quad g(\nabla f,\nabla u)^2=(df \otimes df) [\nabla u,\nabla u] \, ,$$
	we get
	\begin{equation}\label{ultima}
		\begin{aligned}
			0 \geq  \int_\Om u \,\Big|\nabla ^2 u-\frac{\lap_g u}{n}g\Big|_g^2 \, d\mu_g  
			+\int_\Om u \, (\ric_g +\nabla^2f-\frac{\beta}{n(1-\beta)} \, df \otimes df)[\nabla u,\nabla u]\,d\mu_g \, . 
		\end{aligned}
	\end{equation}
	Note that, since $u$ is positive in $\Om$ and 	
	\begin{equation}\label{inhessu}
		\Big |\nabla^2 u -\frac{\lap_g u}{n}g\Big |_g \geq 0\,,
	\end{equation}
	the first integral on the right hand side in \eqref{ultima} is nonnegative. Recall that we have assumed  
	\begin{equation*}
		\ric_g +\nabla ^2f-\frac{\beta}{n(1-\beta)} \, df \otimes df 
		\geq 0 \, ,
	\end{equation*} 
	which implies that
		\begin{equation}\label{ricwgeq0}
		(\ric_g +\nabla ^2f-\frac{\beta}{n(1-\beta)} \, df \otimes df)[\nabla u,\nabla u]\geq 0 \, .
	\end{equation} 
	This enables us to estimate both the integral terms on the right hand side of  \eqref{ultima} from below by zero. Recalling that $u>0$ in $\Om$, this implies that equality is attained in all the inequalities \eqref{hessV}, \eqref{youngw}, \eqref{inhessu} and \eqref{ricwgeq0}. From \eqref{youngw}, \eqref{inhessu} and \eqref{ricwgeq0}, we find respectively
	\begin{equation}\label{scalarprod}
		g(\nabla f,\nabla u)=1-\beta\,,
	\end{equation}
	\begin{equation}\label{hessu}
		\nabla ^2 u=\frac{\lap_g u}{n}g \, ,
	\end{equation}
	and
	\begin{equation*}
		(\ric_g +\nabla^2f-\frac{\beta}{n(1-\beta)} \, df \otimes df)[\nabla u,\nabla u]= 0\, .
	\end{equation*}
	Recalling that $\lapw u=-1$ and 
	$$\lap_g u=\lapw u+g(\nabla f,\nabla u) \, ,$$
	from \eqref{scalarprod} and \eqref{hessu} we obtain
	\begin{equation}\label{hessuespl}
		\nabla^2 u =- \frac \beta n g \, .
	\end{equation}
	Thus, by applying \Cref{farinaroncoroni} to $u/\beta$ with $k=0$, we conclude that $\Om$ is a metric ball $B_R^g(p)$ isometric to a Euclidean one, $u$ is radial and is given by 
	\begin{equation}\label{uesplbeta}
		u(x)=\frac{\beta}{2n}(R^2-d_g(x,p)^2)\, .
	\end{equation}
	\par Using \eqref{scalarprod} and \eqref{uesplbeta}, we first determine the form of the weight \(e^{-f}\). Then, by imposing condition \eqref{Ricf_hp}, we see that this leads to a contradiction. Thus, we conclude that the case $0<\beta<1$ cannot occour.
	
	Applying \Cref{farinaroncoroni} with \(k = 0\), from \eqref{formgOm} we obtain that the metric \(g\) has the form
	\begin{equation}\label{formgom1}
		g = dr \otimes dr + r^2 \, \overline g_{\alpha\beta}(\theta) \, d\theta^\alpha \otimes d\theta^\beta \, ,
	\end{equation}
	where \(\{r, \theta^\alpha\}_{\alpha=1}^{n-1}\) are coordinates as in \eqref{coordinates}, and \(\overline g_{\alpha\beta}(\theta)d\theta^\alpha\otimes d\theta^\beta\) is the standard metric on $S^{n-1}$ - the unit sphere in $T_pM$ - induced from $\mathbb{R}^n$. 
	Hence, the 1-form $df$ has the expression
	\begin{equation}\label{df}
		df = f_r \, dr + f_\alpha \, d\theta^\alpha \, ,
	\end{equation}
	where \(f_r\) and \(f_\alpha\) denote the partial derivatives with respect to \(r\) and \(\theta^\alpha\), respectively. Note that \eqref{scalarprod} can be rewritten as
	\begin{equation}\label{dfnablau}
		df(\nabla u) = 1 - \beta \, .
	\end{equation}
	Since \(u\) is radial, recalling that $r(x)=d_g(x,p)$, we have 
	$$
	\nabla u = u_r \, \nabla r \, .
	$$
	Combining this with \eqref{df} and \eqref{dfnablau}, we obtain
	$$
	f_r u_r = 1 - \beta \, .
	$$
	From \eqref{uesplbeta}, it follows that
	$$
	r f_r = - \frac{n(1-\beta)}{\beta} \, ,
	$$
	which integrates to
	\begin{equation}\label{weight}
		e^{-f} = r^{\frac{n(1-\beta)}{\beta}} \, e^{s(\theta)} \, ,
	\end{equation}
	for some smooth function $s(\theta)$.\\
	Next, we show that condition \eqref{Ricf_hp} cannot be satisfied in this case, that is, when $0<\beta<1$. Since \(\Omega\) is isometric to a Euclidean ball, we have
	$$
	\mathrm{Ric} = 0 \quad \text{in } \Omega \, .
	$$
	Hence, setting
	$$
	a \coloneq \frac{n(1-\beta)}{\beta} > 0 \, ,
	$$
	condition \eqref{Ricf_hp} reduces to
	\begin{equation}\label{cond_eucl}
		\nabla^2 f - \frac{1}{a} \, df \otimes df \geq 0 \quad \text{in } \Om\, .
	\end{equation}
	Recall that in local coordinates $\{x^i\}_{i=1}^n$, the Hessian of a smooth function $h$ is given by
	\begin{equation}\label{hessh}
		(\nabla^2 h)_{ij} = h_{ij} - \Gamma_{ij}^k \, h_k \, ,
	\end{equation}
	where $\Gamma_{ij}^k$ are the Christoffel symbols associated to the Levi-Civita connection of $g$:
	\begin{equation}\label{Christoffel}
		\Gamma_{ij}^k = \frac{1}{2} g^{kl} (\partial_i g_{jl} + \partial_j g_{il} - \partial_l g_{ij}) \, .
	\end{equation}
	From the form of $g$ in \eqref{formgom1}, it is straightforward to verify that the Christoffel symbols are given by
	$$
	\Gamma_{rr}^r = \Gamma_{rr}^\alpha = \Gamma_{r\alpha}^r = 0 \, , \quad 
	\Gamma_{r\alpha}^\beta = \frac{1}{r} \delta_\alpha^\beta \, , \quad 
	\Gamma_{\alpha\beta}^r = -r \, \overline g_{\alpha\beta} \, , \quad 
	\Gamma_{\alpha\beta}^\gamma = \hat \Gamma_{\alpha\beta}^\gamma \, ,
	$$
	where $\hat \Gamma_{\alpha\beta}^\gamma$ are the Christoffel symbols for $\overline g$.
	Hence, from \eqref{hessh} we get
	\begin{multline}\label{hessf}
		\nabla^2 f = f_{rr} \, dr \otimes dr + \left(f_{r\alpha} - \frac{1}{r} f_\alpha\right) (dr \otimes d\theta^\alpha + d\theta^\alpha \otimes dr) \\+ (f_{\alpha\beta} + r f_r \, \overline g_{\alpha\beta} - \hat\Gamma_{\alpha\beta}^\gamma f_\gamma) \, d\theta^\alpha \otimes d\theta^\beta \, .
	\end{multline}
	Moreover, we have
	\begin{equation}\label{dfotdf}
		df \otimes df = f_r^2 \, dr \otimes dr + f_r f_\alpha \, (dr \otimes d\theta^\alpha + d\theta^\alpha \otimes dr) + f_\alpha f_\beta \, d\theta^\alpha \otimes d\theta^\beta \, .
	\end{equation}
	Combining \eqref{hessf} and \eqref{dfotdf}, we obtain
	\begin{multline}\label{hessf-1adfotdf}
		\nabla^2 f - \frac{1}{a}\,  df \otimes df = \Big(f_{rr} - \frac{1}{a} f_r^2\Big) \, dr \otimes dr - \Big(-f_{r\alpha}+\frac{1}{r} f_\alpha + \frac{1}{a} f_r f_\alpha \Big) \, (dr \otimes d\theta^\alpha + d\theta^\alpha \otimes dr) \\  +\left(f_{\alpha\beta} + r f_r \, \overline g_{\alpha\beta} - \hat\Gamma_{\alpha\beta}^\gamma f_\gamma-\frac 1 a f_\al f_\bb\right) \, d\theta^\alpha \otimes d\theta^\beta \, .
	\end{multline}
	From \eqref{weight}, we have
	\begin{equation}\label{frfa}
		f_r = -\frac{a}{r} \, , \quad f_\alpha = -s_\alpha \, .
	\end{equation}
	Thus, substituting \eqref{frfa} into \eqref{hessf-1adfotdf}, condition \eqref{cond_eucl} becomes 
	\begin{equation}\label{condsn-1}
		\left(- a \, \overline g_{\alpha\beta} - s_{\alpha\beta} + \hat \Gamma_{\alpha\beta}^\gamma s_\gamma - \frac{1}{a} s_\alpha s_\beta \right) \, d\theta^\alpha \otimes d\theta^\beta \geq 0 \, .
	\end{equation}
	Note that 
	$$(\nabla^2_\theta s)_{\al\bb}=s_{\alpha\beta} - \hat \Gamma_{\alpha\beta}^\gamma s_\gamma \, ,$$
	where $\nabla^2_\theta s$ denotes the Hessian of $s$ on $(S^{n-1},\overline g)$. Thus, taking the trace of \eqref{condsn-1} on $S^{n-1}$ gives 
	$$a(n-1)+\lap_{\overline g} s +\frac 1 a |\nabla_\theta s|^2 \leq 0 \quad \text{on } S^{n-1} \, .$$
	Integrating over $S^{n-1}$ and using
	$$
	\int_{{S}^{n-1}} \Delta_{\overline g} s \, d{V_{\overline g}} = 0\, ,
	$$
	we obtain
	$$
	a(n-1)|{S}^{n-1}| + \frac{1}{a} \int_{{S}^{n-1}} |\nabla_\theta s|^2 \, d\tilde{V_{\overline g}} \leq 0\, ,
	$$
	which is a contradiction since $a>0$. Hence, the case $0<\beta<1$ cannot occur.\smallskip\\
	\indent \textbf{Case} $\bm{\beta=1:}$ Note that, in this case, \eqref{princ} becomes 
	\begin{equation}\label{betaeq1a}
		0\geq \int_\Omega u \, \Big\{ 
		\left|\nabla^2 u - \frac{\Delta_g u}{n} g \right|_g^2 
		+ \frac{2}{n} g(\nabla f, \nabla u)\lapw u \\
		+ \frac{1}{n} g(\nabla f, \nabla u)^2 
		+ (\mathrm{Ric}_g + \nabla^2 f)[\nabla u, \nabla u] 
		\Big\} \, d\mu_g \, ,
	\end{equation}
	If we applied the weighted Young inequality \eqref{youngw} as in the case $\beta<1$, \eqref{k} would yield $k=0$, not admissible since $k$ has to be positive. However, recalling that $\lapw u=-1$, we rewrite \eqref{betaeq1a} as
	\begin{equation}\label{betaeq1}
		0\geq \int_\Omega u \, \Big\{ 
		\left|\nabla^2 u - \frac{\Delta_g u}{n} g \right|_g^2 
		+ \frac{1}{n}g(\nabla f, \nabla u)(g(\nabla f,\nabla u)-2) 
		+ (\mathrm{Ric}_g + \nabla^2 f)[\nabla u, \nabla u] 
		\Big\} \, d\mu_g \, .
	\end{equation}
	From the assumption \eqref{iii} we have 
	\begin{equation}\label{condiii}
		g(\nabla f,\nabla u)(g(\nabla f,\nabla u)-2)\geq 0 \quad \text{in} \ \Om \,.
	\end{equation}
	We notice that \eqref{condiii} is satisfied also if
	\begin{equation*}
		g(\nabla f,\nabla u)\geq 2 \quad \text{in} \ \Om \, ,
	\end{equation*}
	which is not admissible, since in this case
	$$\lap_g u = \lapw u+g(\nabla f,\nabla u)\geq 1 \, .$$ 
	and the maximum principle implies $u<0$ in $\Om$, a contradiction.\\ 
	Hence, from
	\begin{equation}\label{ricinf}
		(\ric_{f}^{\infty})[\nabla u,\nabla u] = (\ric_g+\nabla^2 f)[\nabla u,\nabla u] \geq 0 \, ,
	\end{equation}
	using \eqref{inhessu}, \eqref{condiii}, \eqref{ricinf}, and the fact that $u$ is positive in $\Om$, we get that the equality sign is attained in \eqref{betaeq1} and, moreover, equality is attained in all the inequalities \eqref{iii}, \eqref{hessV}, \eqref{inhessu} and \eqref{ricinf}. Now, as in the case $\beta<1$, by applying \Cref{farinaroncoroni} to $u$ with $k=0$, we conclude that $\Om$ is a metric ball isometric to a Euclidean one, $u$ is radial and is explicitly given by \eqref{uespl}.
	\par Finally, we show that, in this case, condition \eqref{Ricf_hp} forces the weight $e^{-f}$ to be constant.\\
	Note that equality in \eqref{iii} correponds to
	\begin{equation}\label{iiieq}
		g(\nabla f, \nabla u) = 0 \quad \text{in } \Omega\, ,
	\end{equation}
	which coincides with \eqref{scalarprod} for $\beta = 1 $. Therefore, proceeding as in the case $0 < \beta < 1 $, we obtain \eqref{weight}, which now reads
	\begin{equation}\label{weightbeta1}
		e^{-f} = e^{s(\theta)}
	\end{equation}
	for some smooth function $s(\theta)$.
	Moreover, in this setting, condition \eqref{Ricf_hp} becomes
	$$
	\ric + \nabla^2 f \geq 0 \quad \text{in } \Omega\, .
	$$
	Since $\Omega$ is isometric to a Euclidean ball, we have $\ric = 0$, and hence
	$$
	\nabla^2 f \geq 0 \quad \text{in } \Om\, ,
	$$
	which in particular implies
	\begin{equation}\label{lapfgeq0}
		\Delta_g f \geq 0 \quad \text{in } \Om\, .
	\end{equation}
	Applying the divergence theorem in $\Omega$, and using \eqref{iiieq} together with the boundary condition $u=0$ on $\p\Om$, we obtain
	$$
	\int_\Omega u \, \Delta_g f \, dV_g= 0 \, .
	$$
	Hence, since $u > 0$ in $\Om$, from \eqref{lapfgeq0} it follows that
	$$
	\Delta_g f \equiv 0 \quad \text{in } \Om \, .
	$$
	Consequently, using \eqref{weightbeta1}, we deduce that
	$$
	\Delta_{\overline g} s \equiv 0 \quad \text{on } S^{n-1}.
	$$
	Since $(S^{n-1}, \overline{g})$ is a compact Riemannian manifold without boundary, this implies that $s$ is constant. Therefore, by \eqref{weightbeta1}, the weight $e^{-f}$ must also be constant.
\end{proof}
\begin{proof}[An alternative proof of \Cref{main} (via the P-function method)]
	Let 
	\begin{equation}\label{Pfunction}
		P(u)\coloneq |\nabla u|_g^2+\frac{2\beta}{n} u \, .
	\end{equation}
	We distinguish between the cases $0<\beta<1$ and $\beta =1$.\\
	\textbf{Case} $\bm{0<\beta <1:}$
	From \Cref{weighted_Bochner} with $m=\frac n \beta>n$, we have that $\lapw P(u)\geq 0$, since by assumption $\ric_f^m\geq 0$. Then, from the strong maximum principle, $P(u)$ cannot attain a maximum in $\Om$ unless it is constant throughout $\Om$. Hence, since $\overline\Om^g$ is compact, $P(u)\in C^0(\overline\Om^g)$ and $P(u)=c^2$ on $\partial\Om$, then either 
	\begin{equation}\label{Pconst}
		P(u)\equiv c^2 \quad \text{in} \ \Om \, ,
	\end{equation}
	or
	\begin{equation}\label{Pleq}
		P(u)<c^2 \quad \text{in} \ \Om \, .
	\end{equation}
	By contradiction assume that \eqref{Pleq} is satisfied. Thus,
	\begin{equation*}
		|\nabla u|_g^2 +\frac {2\beta}{n}u <c^2 \quad \text{in} \ \Om\, .
	\end{equation*}
	By integrating in $\Om$ 
	\begin{equation}
		\int_\Om |\nabla u|_g^2 \, d\mu_g +\frac{2\beta}{n}\int_\Om u \, d\mu_g< c^2 |\Om|_f \, ,
	\end{equation}
	and using the Pohozaev identity \eqref{pohozaev}, we obtain
	$$\int_\Om |\nabla u|_g^2 \, d\mu_g +\frac{2\beta}{n}\int_\Om u \, d\mu_g<\int_\Om u \, d\mu_g +2 \int_\Om \nabla^2 V(\nabla u,\nabla u) \, d\mu_g \, .$$
	Assumption
	$$\nabla ^2 V\leq \frac \beta n g $$
	and \eqref{intnablau} yield
	$$\int_\Om |\nabla u|_g^2 \, d\mu_g +\frac{2\beta}{n}\int_\Om u \, d\mu_g<\int_\Om |\nabla u|_g^2\, d\mu_g +\frac{2\beta}{n}\int_\Om u \, d\mu_g \, ,$$
	a contradiction. Thus, $P(u)$ must be constant and, consequently, $\lapw P(u)=0$. In particular, equality is attained in \eqref{weighted_Bochnineq} and from \Cref{weighted_Bochner} we have
	$$\nabla^2 u =\frac{\lap_g u}{n} g \quad \text{in} \ \Om$$
	and 
	$$\lapw u =-\frac{m}{m-n}\, g(\nabla f,\nabla u) \quad \text{in} \ \Om\, .$$
	Since $m=\frac n \beta$ and $\lapw u=-1$, we get
	$$\nabla^2 u=-\frac \beta n g \quad \text{in} \ \Om$$
	and 
	$$g(\nabla f,\nabla u)=1-\beta \quad \text{in} \ \Om \, ,$$
	which correspond to \eqref{hessuespl} and \eqref{scalarprod}, respectively. Now, we obtain a contradiction by arguing as in the previous proof. Hence, the case $0<\beta<1$ cannot occour.\\
	\textbf{Case} $\bm{\beta =1:}$ In this case, we can not apply \Cref{weighted_Bochner} as above, since $m=n$. However, arguing as in the proof of \Cref{weighted_Bochner}, we have that
	\begin{align*}
		\frac 1 2 \lapw P(u)&\geq \frac 1 n (\lapw u+g(\nabla f,\nabla u))^2+\ric_f^\infty(\nabla u,\nabla u) +\frac 1 n \lapw u \\
		& = -\frac 2 n g(\nabla f,\nabla u)+\frac 1 n g(\nabla f,\nabla u)^2+\ric_f^\infty(\nabla u,\nabla u)\\
		&=\frac 1 n g(\nabla f,\nabla u)(g(\nabla f,\nabla u)-2)+\ric_f^\infty(\nabla u,\nabla u) \, ,
	\end{align*}
	where we have used the Bochner-Weitzenb\"{o}ck formula \eqref{bochnweitz}, \eqref{cs} and $\lapw u=-1$ in $\Om$.
	Since by assumption 
	\begin{equation}\label{assiii}
		g(\nabla f,\nabla u)\leq 0 \quad \text{in}\ \Om
	\end{equation}
	and $\ric_f^\infty\geq 0$, it follows that
	$$\lapw P(u)\geq 0 \quad \text{in} \ \Om \, .$$
	As in the case $\beta<1$, integrating \eqref{Pleq} and applying the Pohozaev identity \eqref{pohozaev} leads to a contradiction. Therefore, by the maximum principle, 
	$$\lapw P(u)\equiv 0\quad \text{in} \ \Om \, .$$ 
	Thus, equality is attained both in \eqref{cs} and \eqref{assiii}, which gives
	$$\nabla^2 u =\frac{\lap_g u}{n}g$$ 
	and
	$$g(\nabla f,\nabla u)=0 \quad \text{in} \ \Om \, ,$$
	respectively. 
	Recalling that $\lapw u=\lap_g u-g(\nabla f,\nabla u)$, we obtain
	$$\nabla^2 u =-\frac 1 n g \, .$$
	Hence, the application of \Cref{farinaroncoroni} completes the proof.
\end{proof}
	We now compare the two proofs of \Cref{main} given above.\\
	Recall that we define
	$$P(u)\coloneq |\nabla u|_g^2+\frac{2\beta}{n}u \, .$$
	In the $P$-function approach, we aim to prove that equality holds in \eqref{bochneq} and \eqref{bochneq1}. Then, using the equation $\lapw u=-1$ in $\Om$ and applying \Cref{farinaroncoroni}, one deduces that $\Om$ is isometric to a Euclidean ball, $u$ is radial and its explicit form is determined.\\
	From \Cref{weighted_Bochner} it follows that $\lapw P(u)\geq 0$ in $\Om$ and equality is attained in \eqref{bochneq} and \eqref{bochneq1} if and only if
	\begin{equation}\label{lapPzero}
		\lapw P(u)\equiv 0 \quad \text{in} \ \Om \, .
	\end{equation} 
	This condition is achieved by applying the strong maximum principle together with the Pohozaev identity \eqref{pohozaev}. Now, we show that our first argument - entirely based on integral identities - actually establishes \eqref{lapPzero} without invoking the strong maximum principle for $P$.\\
	Using the generalized Bochner-Weitzenb\"{o}ck formula \eqref{bochnweitz} we have
	\begin{equation}\label{lapP}
		\frac 1 2 \lapw P(u) 
		= |\nabla^2 u|_g^2 + \ric_f^\infty(\nabla u,\nabla u)+\frac \beta n \lapw u \, .
	\end{equation} 
	Thus, using \eqref{lapP}, we can rewrite \eqref{idciraolo} as
	\begin{equation}\label{idcP}
		\begin{aligned}
			&e^f\divg_g \left[e^{-f}\left(u\nabla \left(\frac{|\nabla u|_g^2}{2}\right)-u\,\lapw u \nabla u-\frac 1 2 |\nabla u|_g^2\nabla u\right)\right]\\
			&=  u \, \frac{\lapw P(u)}{2}-u (\lapw u)^2-\frac 3 2 |\nabla u|_g^2 \lapw u -u \, \frac \beta n \lapw u \, .
		\end{aligned}
	\end{equation}
	Therefore, by proceeding exactly as in the first proof of \Cref{main} — namely, integrating \eqref{idcP} in $\Om$, using the divergence theorem together with $\lapw u=-1$ in $\Om$ and applying the Pohozaev identity \eqref{pohozaev} — we get
	\begin{equation*}
		0= \int_\Om u \, \frac{\lapw P(u)}{2} \, d\mu_g -\int_\Om \nabla^2 V(\nabla u,\nabla u) \, d\mu_g +\frac \beta n \int_\Om u \, d\mu_g \, .
	\end{equation*}
	Hence, using \eqref{intnablau} and the assumption 
	$$\nabla^2 V\leq \frac \beta n g \, ,$$
	we find 
	\begin{equation}\label{intPin}
		0\geq  \int_\Om u \, \frac{\lapw P(u)}{2} \, d\mu_g \, .
	\end{equation}
	Next, combining \eqref{lapP} with \eqref{id}, we compute
	\begin{equation*}
		\frac 1 2 \lapw P(u) 
		\begin{multlined}[t]
			= (\lapw u)^2+\left|\nabla^2 u-\frac{\lap_g u}{n}g\right|_g^2-\frac{n-1}{n}(\lapw u)^2 +\frac 2 n g(\nabla f,\nabla u)\lapw u +\frac 1 n g(\nabla f,\nabla u)^2 \\+\ric_f^\infty(\nabla u,\nabla u)+\frac \beta n \lapw u \, .
		\end{multlined}
	\end{equation*}
	Since $\lapw u=-1$ in $\Om$, this simplifies to
	\begin{equation}\label{lapP1}
		\frac 1 2 \lapw P(u)=\left|\nabla^2 u-\frac{\lap_g u}{n}g\right|_g^2 +\frac{1-\beta}{n}+\frac 2 n g(\nabla f,\nabla u)\lapw u +\frac 1 n g(\nabla f,\nabla u)^2+\ric_f^\infty(\nabla u,\nabla u) \,.
	\end{equation}
	Hence, multiplying by $u$ and integrating in $\Om$, we deduce that
	\begin{equation}\label{intlapP}
		\begin{multlined}
			\int_\Om u \,\frac {\lapw P(u)}{2} \, d\mu_g
			=\left(\frac{1-\beta}{n}\right)\int_\Om u \, d\mu_g  \\+ \int_\Omega u \, \Big\{ 
			\left|\nabla^2 u - \frac{\Delta_g u}{n} g \right|_g^2 
			+ \frac{2}{n} g(\nabla f, \nabla u)\lapw u 
			+ \frac{1}{n} g(\nabla f, \nabla u)^2 
			+ \ric_f^\infty(\nabla u, \nabla u) 
			\Big\} \, d\mu_g \, .
		\end{multlined}
	\end{equation} 
	Our argument shows that, under the assumptions of \Cref{main}, the integrand on the right-hand side of \eqref{intlapP} - which coincides with $u \,\lapw P(u)/2$ from \eqref{lapP1} - is nonnegative. Therefore, since $u>0$ in $\Om$, it follows that
	\begin{equation}\label{lapPgeq0}
		\lapw P(u)\geq 0 \quad \text{in} \ \Om \, .
	\end{equation} 
	Combining \eqref{lapPgeq0} with \eqref{intPin}, we conclude that
	$$\int_\Om u \, \frac{\lapw P(u)}{2} = 0 \, .$$
	Since $u$ is positive, from \eqref{lapPgeq0} we deduce that
	$$\lapw P(u)\equiv 0 \quad \text{in} \ \Om \, ,$$
	and hence \eqref{lapPzero} holds.
\section{The non-compact case}\label{nccompact_case}
\subsection{Preliminary results}\label{preliminarynoncompl}
In this section, we establish several preliminary results that will be essential for the proof of \Cref{main_nc}. We begin by proving a version of the divergence theorem in the Riemannian setting $(\R^n\setminus\{O\},g)$, where the metric $g$ is defined as in \eqref{g}. We then derive some fundamental properties of solutions to \eqref{ourprob}, and conclude with a discussion on the energy space $W^{1,2}_0(\Om,d\mu_g)$ where the problem is formulated. \\Throughout, we consider $\Om$ to be a set in $(\mathbb{R}^n \setminus \{O\}, g)$ satisfying the following assumptions:
\begin{equation} \label{domain}
	\begin{split}
		\Omega \text{ is a bounded, open, and connected set in } (\mathbb{R}^n \setminus \{O\}, g) \\
		\text{ with compact boundary } \partial\Omega \text{ of class } C^{1,s}.
	\end{split}
\end{equation}
 We denote by $\overline \Om$ the closure of $\Om$ with respect to the Euclidean metric in $\R^n$. Moreover, we consider a larger range of parameters than the one in \Cref{main_nc} and, for simplicity, we assume only $\gamma < 1$ a priori; as shown in \Cref{gammamin1}, this assumption is not restrictive. It ensures that the origin does not lie at infinity. Indeed, when $\gamma>0$, the origin of $\R^n$ represents a singularity, since the metric $g$ is not defined at this point. Now, we show that, if $\gamma <1$, then this singularity lies at a finite distance. More precisely, for a fixed point $x \in \R^n\setminus\{O\}$, consider
\begin{equation}\label{defdistance}
	d_g(x,O)\coloneq \inf\{L(\sigma) : \sigma:[0,1)\to M \ \text{admissible curve}, \sigma(0)=x, \lim_{t \to 1^-} \sigma(t) = O\} \, ,
\end{equation}
where $L(\sigma)$ denotes the length of the curve $\sigma$ with respect to the metric $g$.
\begin{lemma}\label{Ofinitedist}
	Let $\gamma<1$. Then,
	\begin{equation}\label{dist0}
		d_g(x,O)=\frac{|x|^{1-\gamma}}{1-\gamma} \, , \quad x \in \R^n\setminus\{O\} 
	\end{equation}
	and
	\begin{equation}\label{ballgeucl}
		B_R^g = B_{((1-\gamma)R)^{\frac{1}{1-\gamma}}}^\delta(O)\setminus\{O\}  \, .
	\end{equation}
	Let $\Om$ satisfy \eqref{domain}. If $\alpha>-n$, then 
	\begin{equation*}
		e^{-f} \in L^1(\Om,dV_g) \, ,
	\end{equation*}
	where $e^{-f}$ is defined by \eqref{pesopower}.
    In this case, the weighted volume of $B_R^g$ is given by
	\begin{equation}\label{wvolumeball}
		|B_R^g|_f =w_n (1-\gamma)^{n-1} \frac{R^{\alpha+n}}{\alpha+n} \, ,
	\end{equation}
	where $w_n=|\Sp|$ is the measure of the unit sphere.
\end{lemma}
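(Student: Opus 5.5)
The plan is to work directly with the conformal structure $g=|x|^{-2\gamma}\delta$, first computing $d_g(x,O)$ explicitly and then deducing the remaining claims from it.

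\textbf{Step 1: lower bound on lengths.} For an admissible curve $\sigma$ from $x$ to $O$, its $g$-length is
\[
L(\sigma)=\int_0^1 |\sigma(t)|^{-\gamma}|\sigma'(t)|\,dt .
\]
Since $\bigl|\tfrac{d}{dt}|\sigma(t)|\bigr|\le|\sigma'(t)|$ wherever $\sigma\neq O$, we get
\[
L(\sigma)\ge \int_0^1 |\sigma|^{-\gamma}\Bigl|\tfrac{d}{dt}|\sigma|\Bigr|\,dt \ge \Bigl|\int_0^1 \tfrac{d}{dt}\tfrac{|\sigma|^{1-\gamma}}{1-\gamma}\,dt\Bigr| = \frac{|x|^{1-\gamma}}{1-\gamma}.
\]
In the last step we use $\gamma<1$, so that $|\sigma(t)|^{1-\gamma}\to 0$ as $t\to1^-$. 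The primitive is well defined precisely because $\gamma<1$; for $\gamma\ge1$ the integral would diverge, and this is the point where the hypothesis enters.

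\textbf{Step 2: attaining the bound.} Take the radial segment $\sigma(t)=(1-t)x$. Its length is
\[
\int_0^{|x|} s^{-\gamma}\,ds=\frac{|x|^{1-\gamma}}{1-\gamma},
\]
which is finite, again by $\gamma<1$. This gives \eqref{dist0}. Then \eqref{ballgeucl} follows immediately: $d_g(x,O)<R$ if and only if $|x|<((1-\gamma)R)^{1/(1-\gamma)}$.

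\textbf{Step 3: integrability of the weight.} Write $r=d_g(x,O)$. Then $e^{-f}=r^\alpha=c\,|x|^{\alpha(1-\gamma)}$ and $dV_g=|x|^{-n\gamma}\,dx$. Since $\Omega$ is bounded, it sits inside some $B^g_{R_0}$. Its boundary is compact and avoids $O$; the only place integrability can fail is near the origin, and possibly near $O$ if $\Omega$ contains a punctured neighbourhood of it. In Euclidean polar coordinates the integrand near $O$ behaves like $\rho^{\alpha(1-\gamma)-n\gamma+n-1}$. This is integrable if and only if $\alpha(1-\gamma)+n(1-\gamma)>0$, which holds because $\alpha>-n$ and $\gamma<1$.

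\textbf{Step 4: weighted volume of balls.} A cleaner route is to change variables to $r$. We have $dr=|x|^{-\gamma}\,d|x|$ and $|x|=((1-\gamma)r)^{1/(1-\gamma)}$, so
\[
dV_g=|x|^{-n\gamma}|x|^{n-1}\,d|x|\,d\theta=|x|^{(n-1)(1-\gamma)}\,dr\,d\theta=((1-\gamma)r)^{n-1}\,dr\,d\theta .
\]
Hence
\[
|B_R^g|_f=w_n(1-\gamma)^{n-1}\int_0^R r^{\alpha+n-1}\,dr=w_n(1-\gamma)^{n-1}\frac{R^{\alpha+n}}{\alpha+n},
\]
which is \eqref{wvolumeball}. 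The same computation also reproves integrability in Step 3 directly, since $r^{\alpha+n-1}$ is integrable at $0$ exactly when $\alpha>-n$.

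\textbf{Main obstacle.} The only delicate point is the lower bound in Step 1 for arbitrary admissible curves, which need not be monotone in $|x|$ and may oscillate. Writing the length via the function $|\sigma|^{1-\gamma}$ and using absolute continuity handles this. One must also check that the curve does not pass through $O$ before time $1$, which holds by definition of an admissible curve in $M$.
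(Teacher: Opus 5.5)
Your proposal is correct and follows essentially the same route as the paper: the lower bound from $|\dot\sigma|\ge\bigl|\tfrac{d}{dt}|\sigma|\bigr|$ integrated against $|\sigma|^{-\gamma}$ (convergent at $O$ because $\gamma<1$), the radial segment $(1-t)x$ to attain it, and polar coordinates near the origin for integrability. The only difference is cosmetic: in Step 4 you integrate in the $g$-radial variable $r=d_g(x,O)$ using $dV_g=((1-\gamma)r)^{n-1}\,dr\,d\theta$, whereas the paper integrates in the Euclidean radius up to $((1-\gamma)R)^{1/(1-\gamma)}$; both give the same weighted volume formula.
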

\begin{proof}
	Let $\sigma$ be a curve as in \eqref{defdistance}. Write $\sigma$ in polar coordinates as
	$$\sigma(t)=r(t)\theta(t) \, ,$$
	where $r(t)=|\sigma(t)|$ and $\theta(t)=\sigma(t)/|\sigma(t)| \in \Sp$.
	We have
	\begin{equation}\label{dersigma}
		|\dot{\sigma}|^2 = |\dot r |^2+r^2 |\dot \theta|^2 \, .
	\end{equation}
	Hence, the lenght of $\sigma$ satisfies
	$$L(\sigma)=\int_0^1 \sqrt{g(\dot \sigma,\dot \sigma)} \, dt\\
	=\int_0^1 |\sigma(t)|^{-\gamma} |\dot \sigma(t)| \, dt \, .$$
	Since from \eqref{dersigma} it follows that $|\dot \sigma |\geq |\dot r|$, we obtain
	$$L(\sigma)\geq \int_0^1 r(t)^{-\gamma} |\dot r(t)| \, dt \geq \left|\int_{|x|}^{0} r^{-\gamma} \, dr \right| \, .$$
	If $\gamma<1$ the last integral converges, giving
	\begin{equation}\label{distin}
		d_g(x,O)\geq \frac{|x|^{1-\gamma}}{1-\gamma} \, .
	\end{equation}
	Now, consider the radial curve
	$$\tilde\sigma(t)\coloneq (1-t)x \, .$$
	which lies in the class given by \eqref{defdistance}. It is straightforward to see that 
	$$L(\tilde\sigma)=\frac{|x|^{1-\gamma}}{1-\gamma} \, .$$
	Thus, from \eqref{distin} we conclude \eqref{dist0}.\\ Note that this also shows that $(M,g)$ is geodesically incomplete: the radial curve $\tilde\sigma$ is a geodesic, as $L(\tilde\sigma) = d_g(x,O)$, and it reaches $p=O$ in finite time. Since the metric is not defined at the singularity, $\tilde\sigma$ cannot be extended beyond $t=1$.
	\par Let $\Om$ satisfy \eqref{domain}. Recall that 
	$$e^ {-f}=d_g(x,O)^\alpha \, .$$
	From \eqref{dist0}, this can be rewritten as
	$$e^{-f}=\frac{|x|^{\alpha(1-\gamma)}}{(1-\gamma)^\alpha} \, .$$
	Note that, if the origin does not belong to $\overline\Om$, then $e^{-f} \in C^0(\overline\Om)$, and hence $e^{-f}\in L^1(\Om,dV_g)$. Thus, we may assume that $O\in \overline \Om$. In this case, $e^{-f} \in L^1(\Om,dV_g)$ if and only if $e^{-f} \in L^1(B_R^g,dV_g)$, for any $R>0$. Let $w_n=|\Sp|$ denote the measure of the unit sphere.
	Using \eqref{ballgeucl}, we compute
	\begin{equation}\label{measwBR}
		\int_{B_R^g} e^{-f} \, dV_g = \frac{w_n}{(1-\gamma)^\alpha} \int_0^{((1-\gamma)R)^{1/(1-\gamma)}} r^{(1-\gamma)(\alpha+n)-1} \, dr \, ,
	\end{equation}
	which is finite if and only if $\alpha>-n$.
	\par Finally, note that, if $\alpha>-n$, from \eqref{measwBR} we get
	$$|B_R^g|_f = w_n (1-\gamma)^{n-1} \frac{R^{\alpha+n}}{\alpha+n} \, ,$$
	which concludes the proof.
\end{proof}
\begin{remark}
	In this setting we recall that for conformal metrics $g$ and $\delta$ we have (see for instance \cite[Formulas (2.68), (2.74), (2.77) and (2.94)]{CatinoMastrolia})
	\begin{align}\label{ric}
		\ric_g &= \ric_\delta -(n-2)(D^2\phi -d\phi\otimes d\phi) -(\lap\phi+(n-2)|D\phi|^2)\delta\nonumber\\
		&=-(n-2)(D^2\phi -d\phi\otimes d\phi) -(\lap\phi+(n-2)|D\phi|^2)\delta \, ,
	\end{align}
	since $\ric_\delta=0$. If $u$ is a smooth function and $X$ is a smooth vector field on $M$, then the following holds:
	\begin{align}
		&\nabla u = e^{-2\phi}Du \label{grad} \, ,\\
		&\nabla^2 u = D^2u-(du\otimes d\phi+d\phi\otimes du)+(Du\cdot D\phi)\delta \label{hess} \, ,\\
		&\lap_g u = e^{-2\phi}(\lap u+(n-2)(Du\cdot D\phi)) \label{lap} \, ,\\
		&\divg_g X = \divg X+n X \cdot D\phi\label{div}\, .
	\end{align}
\end{remark}
\par Note that, since the origin is excluded from the manifold, $\overline\Om^g$  is generally not compact in $(\R^n\setminus \{O\},g)$. As a consequence, the divergence theorem cannot be applied directly on $\Om$ for all vector fields that are smooth on $\overline \Om^g$. To address this, we prove a useful lemma based on a suitable cutoff function argument, which we will apply to justify the integration by parts in what follows.
\begin{lemma}\label{lemmadivthm}
	Let $\Om$ satisfy \eqref{domain} and let $g$ be given by \eqref{g}. Let $F \in C^0(\overline\Om^g)$ be a vector field such that $|F|_g \in L^2(\Om,d\mu_g)$, and either $\divg_g F \in L^1(\Om,d\mu_g)$ or $\divg_g (e^{-f}F)\geq 0$ in $\Om$. If
	\begin{equation}\label{condgammadiv}
		\gamma<1 \quad \text{and} \quad \alpha>2-n \, ,
	\end{equation}
	then
	\begin{equation}
		\int_\Om \divg_g(e^{-f}F)\, dV_g= \int_{\p\Om}e^{-f}g(F,\nu) \, d\tilde V_g \,  .
	\end{equation}
\end{lemma}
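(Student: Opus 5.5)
We argue by a cutoff near the origin, the only place where $\overline\Om^g$ fails to be compact. If $O\notin\overline\Om$, then $\overline\Om^g$ is compact and $F$ is continuous up to a $C^{1,s}$ boundary, so the classical divergence theorem applies directly. We may therefore assume $O\in\overline\Om$. Since $\p\Om$ is compact in $\R^n\setminus\{O\}$, the origin has positive Euclidean distance from $\p\Om$. Hence for $\eps$ small the Euclidean ball $B^\delta_{2\eps}(O)$ minus the origin is contained in $\Om$.

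Fix a radial cutoff $\eta_\eps(x)=\eta(|x|/\eps)$ with $\eta\in C^\infty(\R)$, $\eta=0$ on $[0,1]$, $\eta=1$ on $[2,\infty)$ and $0\le\eta\le1$. The field $\eta_\eps e^{-f}F$ vanishes near $O$, and $\eta_\eps=1$ near $\p\Om$. The classical divergence theorem on the compact set $\overline\Om^g\setminus B^\delta_\eps$ therefore gives
\[
\int_\Om \eta_\eps\,\divg_g(e^{-f}F)\,dV_g+\int_\Om e^{-f}g(\nabla\eta_\eps,F)\,dV_g=\int_{\p\Om}e^{-f}g(F,\nu)\,d\tilde V_g .
\]
We then let $\eps\to0$. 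For the first term, if $\divg_g F\in L^1(\Om,d\mu_g)$ — or more precisely if $\divg_g(e^{-f}F)\in L^1(dV_g)$, which follows because $g(\nabla f,F)$ is integrable by Cauchy--Schwarz, using $|\nabla f|_g\in L^2(d\mu_g)$ and $|F|_g\in L^2(d\mu_g)$ — dominated convergence applies. If instead $\divg_g(e^{-f}F)\ge0$, monotone convergence applies: once the gradient term is shown to vanish, the left-hand side stays bounded, so the integral is finite and the identity holds. The check that $|\nabla f|_g\in L^2(d\mu_g)$ uses $|\nabla f|_g=|\alpha|/d_g$ and $\int d_g^{\alpha-2}\,dV_g<\infty$ near $O$, which in the computation of \eqref{measwBR} amounts to $\alpha+n-2>0$. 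In the nonnegative case we bypass this check entirely.

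The main obstacle is showing that the gradient term vanishes, and this is where \eqref{condgammadiv} enters. By Cauchy--Schwarz,
\[
\Big|\int_\Om e^{-f}g(\nabla\eta_\eps,F)\,dV_g\Big|\le \|F\|_{L^2(A_\eps,d\mu_g)}\Big(\int_{A_\eps}|\nabla\eta_\eps|_g^2\,d\mu_g\Big)^{1/2},
\]
where $A_\eps=\{\eps<|x|<2\eps\}$. The first factor tends to $0$ by absolute continuity of the integral, since $|F|_g\in L^2(d\mu_g)$. For the second factor, \eqref{grad} gives $|\nabla\eta_\eps|_g^2=e^{-2\phi}|D\eta_\eps|^2\le C|x|^{2\gamma}\eps^{-2}$ on $A_\eps$. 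Moreover $d\mu_g=C|x|^{\alpha(1-\gamma)}|x|^{-n\gamma}dx$. Hence
\[
\int_{A_\eps}|\nabla\eta_\eps|_g^2\,d\mu_g\le C\eps^{2\gamma-2}\eps^{\alpha(1-\gamma)-n\gamma}\eps^{n}=C\eps^{(1-\gamma)(\alpha+n-2)},
\]
which is bounded, and even tends to $0$, because $\gamma<1$ and $\alpha>2-n$. The product therefore tends to zero, and passing to the limit yields the claimed identity.
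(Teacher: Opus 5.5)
Your proposal is correct and follows essentially the same route as the paper: a cutoff vanishing near the origin, the divergence theorem away from $O$, Cauchy--Schwarz with the scaling $\eps^{(1-\gamma)(\alpha+n-2)}$ (the paper's $\eps^{\alpha+n-2}$ expressed in terms of the $g$-radius) to kill the gradient term, and dominated or monotone convergence for the bulk term according to which of the two hypotheses holds. Your Euclidean radial cutoff is the same as the paper's $g$-radial one, since the balls $B_R^g$ are punctured Euclidean balls by \eqref{ballgeucl}. Your remark that compactness of $\p\Om$ keeps the origin at positive distance from $\p\Om$ justifies the choice of $\eps$, a step the paper leaves implicit.
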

\begin{remark}\label{lemmadivutile}
	If the origin does not belong to $\overline \Om \subset \R^n$, then $\overline \Om^g \subset \R^n\setminus\{O\}$ is compact in $(\R^n\setminus\{O\},g)$ and the divergence theorem can be applied for all $F \in C^1(\overline \Om^g)$. Thus, this lemma is useful only when the origin in fact belongs to $\overline \Om$.
\end{remark}
\begin{remark}
	In the proof, we first multiply the vector field $F$ by a cutoff function with compact support in $\Omega$, allowing the classical divergence theorem to be applied in a localized setting. Although it is usually stated under the stronger regularity assumption $F \in C^1(\overline{\Omega}^g)$, the theorem remains valid under weaker conditions. In particular, it suffices to assume $F \in C^0(\overline{\Omega}^g)$; see \cite[Lemma 4.3]{CiraoloLi}.
\end{remark}
\begin{proof}
	According to \textit{Remark} \ref{lemmadivutile}, we assume that $0 \in \overline\Om$. As already recalled, we define
	\begin{equation*}
		B_R^g \coloneqq \{x \in \R^n\setminus\{O\} : d_g(x,O)< R\} \, .
	\end{equation*} 
	Let $\eps>0$ be such that $B_{4\eps}^g \subset \Om$, $\varphi_\eps \in C^\infty_c(\Om)$ a cut-off function such that $\varphi_\eps \equiv 0$ in $B_{\eps}^g$, $\varphi_\eps \equiv 1$ in $\Om\setminus B_{2\eps}^g$, $0\leq \varphi_\eps \leq 1$, and $|\nabla\varphi_\eps|_g\leq \frac 2 \eps$. \\Since the divergence theorem applies in $\Om\setminus\overline B_\eps^g$, we have 
	\begin{equation}\label{reldiv}
		\int_\Om \divg_g(e^{-f}F\pphi_\eps) \, dV_g =\int_{\Om\setminus B_\eps^g} \divg_g(e^{-f}F\pphi_\eps) \, dV_g =\int_{\p\Om} e^{-f} g(F,\nu)\, d\tilde V_g \, .
	\end{equation}
	Thus, it suffices to prove that
	\begin{equation}\label{limintdivg}
		\int_\Om \divg_g(e^{-f}F\pphi_\eps) \, dV_g \to \int_\Om \divg_g(e^{-f}F) \, dV_g \quad \text{as} \ \eps\to 0 \, .
	\end{equation}
	Note that 
	$$\divg_g(e^{-f}F\pphi_\eps)= e^{-f}\left\{g(F,\nabla \pphi_\eps)+\left(\divg_gF-g(\nabla f,F)\right)\pphi_\eps\right\} \quad \text{in} \ \Om \, .$$
	Hence, recalling that $d\mu_g = e^{-f}dV_g$, we get
	\begin{equation}\label{intdivg}
		\int_{\Om\setminus B_\eps^g} \divg_g(e^{-f}F \pphi_\eps) \, dV_g = \int_{\Om\setminus B_\eps^g} g(F,\nabla \pphi_\eps) \, d\mu_g +\int_{\Om\setminus B_\eps^g} (\divg_g F-g(\nabla f,F)) \, \pphi_\eps \, d\mu_g \, .
	\end{equation}
	Thus, to establish \eqref{limintdivg}, it is enough to show that
	\begin{equation}\label{limto0}
		\int_{\Om\setminus B_\eps^g} g(F,\nabla \pphi_\eps) \, d\mu_g \to 0  \quad \text{as} \ \eps \to 0  \, ,
	\end{equation}
	and
	\begin{equation}\label{limtodivg}
		\begin{multlined}
			\int_{\Om\setminus B_\eps^g} (\divg_g F-g(\nabla f,F))\, \pphi_\eps \, d\mu_g \to \int_{\Om}(\divg_g F-g(\nabla f,F)) \, d\mu_g =\int_\Om \divg_g(e^{-f}F)\, dV_g \\\text{as } \eps\to 0 \, .
		\end{multlined}
	\end{equation}
	We first verify \eqref{limto0}. Since $\nabla \pphi_\eps$ is supported in $B_{2\eps}^g\setminus B_\eps^g$, $|F|_g \in L^2(\Om,d\mu_g)$ and $|\nabla \pphi_\eps|_g\leq \frac 2 \eps$, we have
	\begin{align*}
		\int_{\Om\setminus B_\eps^g} g(F,\nabla \pphi_\eps) \, d\mu_g&=\int_{\Om} g(F,\nabla \pphi_\eps)\chi_{\Om\setminus B_\eps^g} \, d\mu_g \\
		&\leq \left(\int_{\Om} |F|_g^2 \, d\mu_g\right)^{1/2} \left(\int_\Om|\nabla \pphi_\eps|_g^2 \, d\mu_g \right)^{1/2}\\
		&\leq 4\left(\int_{\Om} |F|_g^2 \, d\mu_g\right)^{1/2} \left(\frac{1}{\eps^2}|B_{2\eps}^g\setminus B_{\eps}^g|_f\right)^{1/2} \, .
	\end{align*}
	Using \eqref{wvolumeball}, we obtain
	\begin{equation}\label{measdiffball}
		\frac{1}{\eps^2}|B_{2\eps}^g\setminus B_{\eps}^g|_f\sim \eps^{\alpha+n-2} \to 0 \quad \text{as} \ \eps \to 0\, ,
	\end{equation} 
	since by assumption $\alpha>2-n$.
	Consequently, \eqref{limto0} holds.\\
	\indent To prove \eqref{limtodivg}, note that:
	\begin{equation}\label{secdiv}
		\int_{\Om\setminus B_\eps^g} (\divg_g F-g(\nabla f,F))\, \pphi_\eps \, d\mu_g=\int_{\Om} (\divg_g F-g(\nabla f,F)) \, \pphi_\eps \, \chi_{\Om\setminus B_\eps^g} \, d\mu_g \, .
	\end{equation}
	\textbf{Case 1}: $\divg_g F \in L^1(\Om,d\mu_g)$. Recalling that $\pphi_\eps\leq 1$, we have
	$$|(\divg_g F-g(\nabla f,F)) \, \pphi_\eps \, \chi_{\Om\setminus B_\eps^g}|\leq |\divg_g F-g(\nabla f,F)|\leq |\divg_g F|+|\nabla f|_g |F|_g \, .$$
	If we show that $|\divg_gF|+|\nabla f|_g|F|_g \in L^1(\Om,d\mu_g)$, then we can apply the dominated convergence theorem to \eqref{secdiv} and obtain the desired result. By assumption, $\divg_g F \in L^1(\Om,d\mu_g)$ and $|F|_g \in L^2(\Om,d\mu_g)$. Thus, it sufficies to show that $|\nabla f|_g \in L^2(\Om,d\mu_g)$. Note that 
	$$f(x)=-\alpha(1-\gamma) \log|x|-\alpha \log (1-\gamma) \, ,$$
	since, from \eqref{dist0} we have $$e^{-f}=\frac{|x|^{\alpha(1-\gamma)}}{(1-\gamma)^\alpha} \, .$$
	Recalling that $g_{ij}=|x|^{-2\gamma}\delta_{ij}$ and using \eqref{grad}, we compute
	$$|\nabla f|_g^2=g(\nabla f,\nabla f)=|x|^{2\gamma}|Df|^2 =  \frac{\alpha^2(1-\gamma)^2}{|x|^{2-2\gamma}} \, .$$
	Therefore,
	\begin{equation}\label{nablafinl2}
		\int_\Om|\nabla f|_g^2 \, d\mu_g =\int_\Om \frac{\alpha^2(1-\gamma)^2}{|x|^{2-2\gamma}} \, d\mu_g =\frac{\alpha^2(1-\gamma)^2}{(1-\gamma)^\alpha}\int_\Om |x|^{2(\gamma-1)+\alpha(1-\gamma)-n\gamma} \, dx \, ,
	\end{equation}
	which is finite under the assumption \eqref{condgammadiv}, recalling that, from \eqref{ballgeucl}, $\Om$ is bounded with respect to the Euclidean metric.\\
	\textbf{Case 2}: $\divg_g (e^{-f}F)\geq 0$ in $\Om$. Here, we apply the monotone convergence theorem to conclude
	$$\int_\Om \divg_g (e^{-f}F) \, \pphi_\eps\, d\mu_g \to \int_\Om \divg_g (e^{-f}F) \, d\mu_g \quad \text{as} \ \eps \to 0  \, ,$$
	and thus \eqref{limtodivg} holds in this case as well.
\end{proof}
We now establish two useful properties of solutions to the problem \eqref{ourprob}.
\begin{lemma}\label{nonneg}
	Let $\Om$ be a domain satisfying \eqref{domain}. If $u \in W_0^{1,2}(\Om,d\mu_g)$ is a solution to \eqref{ourprob}, then $u$ is positive in $\Om$.
\end{lemma}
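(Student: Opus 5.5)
The plan is a weak maximum principle in the energy space $W_0^{1,2}(\Om,d\mu_g)$, followed by the strong maximum principle in the interior. Since $u \in W^{1,2}_0(\Om,d\mu_g)$, its negative part $u^- = \max\{-u,0\}$ also lies in $W^{1,2}_0(\Om,d\mu_g)$, with $\nabla u^- = -\nabla u\,\chi_{\{u<0\}}$. This follows by approximating $u$ with $C^\infty_c(\Om)$ functions and using that truncation is continuous in the weighted $W^{1,2}$ norm \eqref{weightnorm}; the weight $e^{-f}$ is smooth and positive on $\Om$, so the usual Stampacchia argument applies locally.

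\emph{Step 1: extending the test functions.} The weak formulation \eqref{weak} is stated only for $\varphi\in C^\infty_c(\Om)$, and we want to extend it to every $\varphi\in W_0^{1,2}(\Om,d\mu_g)$. The left-hand side is continuous in $\varphi$ by Cauchy--Schwarz, because $|\nabla u|_g\in L^2(\Om,d\mu_g)$. The right-hand side $\int_\Om\varphi\,d\mu_g$ is continuous because $|\Om|_f<\infty$: Lemma~\ref{Ofinitedist} gives this when $\alpha>-n$, and we are in the range of \eqref{condgammadiv}, where $\alpha>2-n$. Hence $\left|\int_\Om \varphi \,d\mu_g\right| \le |\Om|_f^{1/2}\|\varphi\|_{L^2(\Om,d\mu_g)}$, and density gives the extension.

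\emph{Step 2: testing with $u^-$.} Taking $\varphi = u^-$ gives
\[
-\int_\Om |\nabla u^-|_g^2\, d\mu_g = \int_\Om u^- \, d\mu_g \ge 0 .
\]
Therefore $\nabla u^- = 0$ a.e. and $\int_\Om u^-\,d\mu_g=0$, so $u^-=0$ a.e. Thus $u\ge 0$ in $\Om$, and since $u$ is smooth in $\Om$ by interior regularity, this holds pointwise.

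\emph{Step 3: strict positivity.} In $\Om$ the operator $\lapw$ is uniformly elliptic with smooth coefficients on compact subsets, since the origin is not a point of $\Om$. The function $u$ is a nonnegative supersolution, $\lapw u=-1\le 0$. By the strong maximum principle (minimum version), on the connected set $\Om$ either $u\equiv 0$ or $u>0$. The case $u\equiv 0$ is impossible because then $\lapw u = 0 \neq -1$. Hence $u>0$ in $\Om$.

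The main obstacle is the functional-analytic point in Step 1: the behaviour near the origin, which may lie on $\overline\Om$ while not being a point of $M$. The boundary condition must be understood in the $W_0^{1,2}(\Om,d\mu_g)$ sense, so both the membership $u^-\in W^{1,2}_0(\Om,d\mu_g)$ and the integrability of the weight near $O$ have to be checked. Everything else is standard.
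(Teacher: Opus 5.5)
Your proof is correct and follows the paper's argument: testing \eqref{weak} with the negative part gives $u\ge 0$, and interior zeros are then excluded. For that last step the paper avoids the strong maximum principle: at an interior minimum $x_0$ one has $Du(x_0)=0$ and $D^2u(x_0)\ge 0$, so $-1=\Delta_g^f u(x_0)=e^{-2\phi(x_0)}\Delta u(x_0)\ge 0$, a contradiction. Your Step 1 spells out the density extension of \eqref{weak} to test functions in $W_0^{1,2}(\Om,d\mu_g)$, which the paper uses without comment; it needs $|\Om|_f<\infty$ (e.g.\ $\alpha>-n$), which is not a stated hypothesis of the lemma, since \eqref{condgammadivpoho} is only imposed afterwards, but it holds wherever the lemma is applied.
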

\begin{proof}
	Recall that $u \in C^1(\overline\Om^g)$. We first show that $u$ is nonnegative in $\Om$ by testing the equation in \eqref{ourprob} against its negative part. Define $$v\coloneqq \min\{u,0\} \quad \text{in}  \ \Om \, .$$ 
	It is straightforward to verify that $v \in W_0^{1,2}(\Om,d\mu_g) \, .$ Taking $\pphi=v$ as a test function in the weak formulation \eqref{weak}, we obtain
	$$\int_\Om g(\nabla u,\nabla v) \, d\mu_g = \int_\Om v \, d\mu_g \, ,$$
	which reduces to
	$$\int_{\{u<0\}} |\nabla u|_g^2 \, d\mu_g =\int_{\{u<0\}} u \, d\mu_g \, .$$
	Note that the left-hand side is nonnegative. If the measure of \(\{u < 0\}\) were positive, then the right-hand side would be strictly negative, leading to a contradiction. Therefore, the measure of \(\{u < 0\}\) must be zero. Hence, $u\geq 0$ almost everywhere in $\Om$ and since $u \in C^1(\overline\Om^g)$, we conclude that $u\geq 0$ in $\Om$. To prove the positivity of $u$, suppose by contradiction that $u(x_0)=0$ for some $x_0 \in \Om$. Thus, $x_0$ is a minimum point for $u$ and $Du(x_0)=0$, $D^2u(x_0)\geq 0$. Hence, recalling \eqref{lap} and \eqref{grad}, we have
	\begin{align*}
		-1&=\lapw u(x_0)\\
		&=e^{-2\phi(x_0)}(\lap u(x_0)+(n-2)Du(x_0)\cdot D\phi(x_0)-Df(x_0)\cdot Du(x_0))\\
		&=e^{-2\phi(x_0)}\lap u(x_0)\geq 0 \, ,
	\end{align*}
	a contradiction. 
\end{proof}
In the statements that follow, we shall assume
\begin{equation}\label{condgammadivpoho}
	\gamma<1 \quad \text{and} \quad \alpha>2-n \, ,
\end{equation}
in order to apply \Cref{lemmadivthm} and to ensure that the weighted volume of $\Om$ is finite, i.e., $e^{-f} \in L^1(\Om,dV_g)$, as observed in \Cref{Ofinitedist}.\\
We first establish \Cref{propsolutions} in this framework, following the same argument and invoking \Cref{lemmadivthm} to justify the use of the divergence theorem.
\begin{lemma}
	Let $\Om$ be a domain satisfying \eqref{domain}, and let $u \in W^{1,2}_0(\Om,d\mu_g)\cap L^\infty(\Om)$ be a solution to \eqref{ourprob}. Assume that $\alpha$ and $\gamma$ satisfy \eqref{condgammadivpoho}.
	Then it holds
	\begin{equation}\label{cweight_nc}
		c=\frac {|\Om|_f}{|\p \Om|_f} \, ,
	\end{equation}
	and
	\begin{equation}\label{intnablau_nc}
		\int_\Om |\nabla u|_g^2 \,d\mu_g = \int_\Om u \,d\mu_g \, .
	\end{equation}
\end{lemma}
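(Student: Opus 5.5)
The plan is to repeat the argument of \Cref{propsolutions}, replacing each use of the classical divergence theorem by \Cref{lemmadivthm}. For this we must check, for each vector field, its continuity on $\overline\Om^g$, the integrability $|F|_g\in L^2(\Om,d\mu_g)$, and either $\divg_g F\in L^1(\Om,d\mu_g)$ or $\divg_g(e^{-f}F)\geq 0$. Throughout we use that $u\in C^1(\overline\Om^g)$, which follows from the regularity remark after \Cref{main_nc}, and that $e^{-f}\in L^1(\Om,dV_g)$ by \Cref{Ofinitedist}, since $\alpha>2-n>-n$.

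\textbf{Identity \eqref{cweight_nc}.} Take $F=-\nabla u$. Then $\divg_g(e^{-f}F)=-e^{-f}\lapw u=e^{-f}\geq 0$, so the second alternative of \Cref{lemmadivthm} holds without computing $\divg_g F$ itself. The condition $|F|_g\in L^2(\Om,d\mu_g)$ holds because $u\in W^{1,2}_0(\Om,d\mu_g)$, or alternatively from \eqref{regassu} together with the finiteness of $|\Om|_f$. \Cref{lemmadivthm} then gives
\[
|\Om|_f=\int_\Om e^{-f}\,dV_g=-\int_{\p\Om}e^{-f}g(\nabla u,\nu)\,d\tilde V_g=c\,|\p\Om|_f,
\]
since $u_\nu=-c$ on the compact boundary $\p\Om$.

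\textbf{Identity \eqref{intnablau_nc}.} Take $F=u\nabla u$. It is continuous on $\overline\Om^g$. Since $u\in L^\infty(\Om)$ and $|\nabla u|_g\in L^2(\Om,d\mu_g)$, we get $|F|_g\leq \|u\|_\infty|\nabla u|_g\in L^2(\Om,d\mu_g)$. For the divergence,
\[
\divg_g(e^{-f}u\nabla u)=e^{-f}\big(|\nabla u|_g^2+u\lapw u\big)=e^{-f}\big(|\nabla u|_g^2-u\big),
\]
which lies in $L^1(\Om,dV_g)$ because $|\nabla u|_g^2\in L^\infty$ by \eqref{regassu} and $u\in L^\infty$, while $e^{-f}\in L^1$. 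We need $\divg_g F\in L^1(\Om,d\mu_g)$, and $\divg_g F=|\nabla u|_g^2+u\Delta_g u$ differs from the expression above by $u\,g(\nabla f,\nabla u)$. This term is also integrable: $|\nabla f|_g\in L^2(\Om,d\mu_g)$ by \eqref{nablafinl2} under \eqref{condgammadivpoho}, and $u|\nabla u|_g\in L^2$. Hence $\divg_g F\in L^1(\Om,d\mu_g)$. \Cref{lemmadivthm} together with $u=0$ on $\p\Om$ then yields $\int_\Om(|\nabla u|_g^2-u)\,d\mu_g=0$, which is \eqref{intnablau_nc}.

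Alternatively, \eqref{intnablau_nc} follows directly from the weak formulation \eqref{weak}. Approximate $u$ in $W^{1,2}_0(\Om,d\mu_g)$ by functions $\varphi_k\in C_c^\infty(\Om)$; the right-hand side converges because $|\Om|_f<\infty$ makes $L^2(d\mu_g)\subset L^1(d\mu_g)$.

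\textbf{Main obstacle.} The only delicate point is verifying the integrability hypotheses of \Cref{lemmadivthm} near the origin, namely the $L^1$ bound on $u\,g(\nabla f,\nabla u)$. This is exactly where $\alpha>2-n$ and $\gamma<1$ enter, through \eqref{nablafinl2}. Choosing the positivity alternative in the first identity avoids this issue there.
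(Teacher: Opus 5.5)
Your proposal is correct. Its backbone is the paper's: both identities come from \Cref{lemmadivthm}, i.e.\ the cutoff argument near $O$. The differences are in packaging, plus one genuinely different alternative.

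For \eqref{cweight_nc}, the paper takes $F=\nabla u$ and uses the $L^1$ alternative. It bounds $|\divg_g F|=|g(\nabla f,\nabla u)-1|\le |\nabla f|_g|\nabla u|_g+1$ via \eqref{nablafinl2}. Your choice $F=-\nabla u$, with $\divg_g(e^{-f}F)=e^{-f}\ge 0$, uses the positivity alternative instead. It therefore never needs $|\nabla f|_g\in L^2(\Om,d\mu_g)$ at that step.

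For \eqref{intnablau_nc}, the paper re-runs the cutoff argument by hand on $u\,\divg_g(e^{-f}\nabla u\,\varphi_\eps)$. You instead feed $F=u\nabla u$ directly into \Cref{lemmadivthm}. The estimates are the same ($u\in L^\infty$ and $|\nabla u|_g,|\nabla f|_g\in L^2(\Om,d\mu_g)$), so this is a cleaner organization of the same proof.

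Your alternative through the weak formulation \eqref{weak} is genuinely different and more economical. You test with $C^\infty_c(\Om)$ approximants of $u$ in $W^{1,2}_0(\Om,d\mu_g)$. This yields \eqref{intnablau_nc} using only $|\Om|_f<\infty$, with no need for $u\in L^\infty(\Om)$ or $\alpha>2-n$. It shows that this identity is really built into the choice of energy space; only the boundary-flux identity \eqref{cweight_nc} genuinely needs the cutoff lemma.

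Two small corrections.

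\begin{itemize}
\item You cite \eqref{regassu} for $|\nabla u|_g^2\in L^\infty$, but that is a hypothesis of \Cref{main_nc}, not of this lemma. You do not need it: $e^{-f}(|\nabla u|_g^2-u)\in L^1(\Om,dV_g)$ already follows from $u\in W^{1,2}_0(\Om,d\mu_g)\cap L^\infty(\Om)$ and $|\Om|_f<\infty$.
\item In your closing remark, $\alpha>2-n$ does not enter only through \eqref{nablafinl2}. It is also used inside \Cref{lemmadivthm} itself, for the decay $\eps^{\alpha+n-2}$ of the cutoff term. So your proof of \eqref{cweight_nc} still depends on it.
\end{itemize}
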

\begin{proof}
	Define $$F\coloneqq\nabla u \, ,$$ so that $|F|_g \in L^2(\Om,d\mu_g)\cap C^0(\overline \Om^g)$, since $u \in W_0^{1,2}(\Om,d\mu_g)\subset W^{1,2}(\Om,d\mu_g)$ and \\$u \in C^1(\overline \Om^g)$. Moreover, from the equation in \eqref{ourprob}, we have
	$$|\divg_g(F)|=|\lap_g u|=|g(\nabla f,\nabla u)-1|\leq |\nabla f|_g |\nabla u|_g+1 \in L^1(\Om,d\mu_g) \, ,$$
	since $|\nabla u|_g=|F|_g \in L^2(\Om,d\mu_g)$ and $|\nabla f|_g \in L^2(\Om,d\mu_g)$ under assumption \eqref{condgammadivpoho}, as established in \eqref{nablafinl2}. Therefore, \Cref{lemmadivthm} applies and from $\lapw u=-1$ we obtain
	$$|\Om|_f=\int_\Om e^{-f}\,dV_g=-\int_\Om \divg_g(e^{-f}\nabla u)\,dV_g=-\int_{\p\Om} e^{-f}g(\nabla u,\nu)\,d\tilde V_g = c\,|\partial\Om|_f \, ,$$
	from which we deduce \eqref{cweight_nc}. Moreover, we have
	\begin{align*}
		\int_\Om u e^{-f}\,dV_g &=-\int_\Om u e^{-f} \lapw u \,dV_g\\
		&=-\int_\Om u\divg_g(e^{-f}\nabla u)\,dV_g\\
		&=\int_\Om g(\nabla u,\nabla u)e^{-f}\,dV_g-\int_{\p\Om}ue^{-f}g(\nabla u,\nu)\,d\tilde V_g \, ,
	\end{align*}
	from which we conclude \eqref{intnablau_nc}, since $u=0$ on $\p\Om$.
	The last equality follows from a cutoff argument similar to that used in the proof of \Cref{lemmadivthm}, relying on the fact that $u \in L^\infty(\Om)$. For completeness, we report all the details. 
	Let $\eps>0$ be such that $B_{4\eps}^g$ is contained in $\Om$, where $B_{4\eps}^g$ is defined as in \eqref{defball}. Consider a smooth cutoff function \(\varphi_\eps \in C_c^\infty(\Omega)\) such that \(\varphi_\eps \equiv 0\) on \(B_\varepsilon^g\), \(\varphi_\eps\equiv 1\) on \(\Omega \setminus B_{2\varepsilon}^g\), with \(0 \leq \varphi_\eps \leq 1\) and \(|\nabla \varphi_\eps|_g \leq \frac{2}{\varepsilon}\). As before, set $F=\nabla u \in C^0(\overline\Om^g)$. Note that
	$$\divg_g(ue^{-f}F\pphi_\eps)=e^{-f}g(\nabla u,F)\pphi_\eps+u\divg_g(e^{-f}F\pphi_\eps) \, .$$
	Then, we have 
	\begin{align*}
		\int_\Om u \, \divg_g(e^{-f}F\pphi_\eps) \, dV_g &=
		\int_{\Om\setminus B_\eps^g} u \, \divg_g(e^{-f}F\pphi_\eps) \, dV_g \\
		&=\int_{\Om\setminus B_\eps^g} \divg_g(ue^{-f}F\pphi_\eps) \, dV_g-\int_{\Om\setminus B_\eps^g} e^{-f}g(\nabla u,F)\pphi_\eps \, dV_g \, .
	\end{align*}
	Since the divergence theorem applies in $\Om\setminus B_\eps^g$, it follows that
	\begin{equation}\label{reldivu}
		\int_\Om u \, \divg_g(e^{-f}F\pphi_\eps) \, dV_g =\int_{\p\Om} ue^{-f} \,g(F,\nu)\, d\tilde V_g \, - \int_{\Om\setminus B_\eps^g} e^{-f}\,g(\nabla u,F)\pphi_\eps \, dV_g \, .
	\end{equation}
	Thus, it suffices to prove that
	\begin{equation}\label{limintdivgu}
		(i) \quad\int_\Om u \,\divg_g(e^{-f}F\pphi_\eps) \, dV_g \to \int_\Om u \, \divg_g(e^{-f}F) \, dV_g \quad \text{as} \ \eps\to 0 
	\end{equation}
	and
	\begin{equation}\label{limintdivgnabla}
		(ii) \quad\int_{\Om\setminus B_\eps^g} e^{-f} g(\nabla u,F)\pphi_\eps \, dV_g \to \int_\Om e^{-f} g(\nabla u,F) \, dV_g  \quad \text{as} \ \eps\to 0 \, .
	\end{equation}
	\textbf{Proof of \textit{(i)}} : Note that 
	$$\divg_g(e^{-f}F\pphi_\eps)= e^{-f}\left\{g(F,\nabla \pphi_\eps)+\left(\divg_gF-g(\nabla f,F)\right)\pphi_\eps\right\} \quad \text{in} \ \Om \, .$$
	Hence, recalling that $d\mu_g = e^{-f}dV_g$, we get
	\begin{equation}\label{intdivgu}
		\int_{\Om\setminus B_\eps^g} u \, \divg_g(e^{-f}F \pphi_\eps) \, dV_g = \int_{\Om\setminus B_\eps^g} u \, g(F,\nabla \pphi_\eps) \, d\mu_g +\int_{\Om\setminus B_\eps^g} u \, (\divg_g F-g(\nabla f,F)) \, \pphi_\eps \, d\mu_g \, .
	\end{equation}
	Thus, to establish \eqref{limintdivgu} it is enough to show that
	\begin{equation}\label{limto0u}
		\int_{\Om\setminus B_\eps^g} u \, g(F,\nabla \pphi_\eps) \, d\mu_g \to 0  \quad \text{as} \ \eps \to 0 
	\end{equation}
	and
	\begin{equation}\label{limtodivgu}
		\begin{multlined}
			\int_{\Om\setminus B_\eps^g} u \,(\divg_g F-g(\nabla f,F))\, \pphi_\eps \, d\mu_g \to \int_{\Om}(\divg_g F-g(\nabla f,F)) \, d\mu_g =\int_\Om \divg_g(e^{-f}F)\, dV_g \\ \text{as } \eps \to 0 \, .
		\end{multlined}
	\end{equation}
	We first verify \eqref{limto0u}. Since $\nabla \pphi_\eps$ is supported in $B_{2\eps}^g\setminus B_\eps^g$, $u \in L^\infty(\Om)$, $|F|_g \in L^2(\Om,d\mu_g)$ and $|\nabla \pphi_\eps|_g\leq \frac 2 \eps$, we have
	\begin{align*}
		\int_{\Om\setminus B_\eps^g} u \, g(F,\nabla \pphi_\eps) \, d\mu_g&=\int_{\Om} u \, g(F,\nabla \pphi_\eps)\chi_{\Om\setminus B_\eps^g} \, d\mu_g \\
		&\leq \|u\|_{L^\infty(\Om)}\left(\int_{\Om} |F|_g^2 \, d\mu_g\right)^{1/2} \left(\int_\Om|\nabla \pphi_\eps|_g^2 \, d\mu_g \right)^{1/2}\\
		&\leq 4\|u\|_{L^\infty(\Om)}\left(\int_{\Om} |F|_g^2 \, d\mu_g\right)^{1/2} \left(\frac{1}{\eps^2}|B_{2\eps}^g\setminus B_{\eps}^g|_f\right)^{1/2}
	\end{align*}
	From \eqref{wvolumeball}, we have 
	$$\frac{1}{\eps^2}|B_{2\eps}^g\setminus B_{\eps}^g|_f\sim \eps^{\alpha+n-2} \to 0 \quad \text{as} \ \eps \to 0 \, ,$$
	since by assumption $$\alpha>2-n\, .$$
	\indent To prove \eqref{limtodivgu}, note that:
	\begin{equation}\label{secdivu}
		\int_{\Om\setminus B_\eps^g} u \, (\divg_g F-g(\nabla f,F))\, \pphi_\eps \, d\mu_g=\int_{\Om} u \, (\divg_g F-g(\nabla f,F)) \, \pphi_\eps \, \chi_{\Om\setminus B_\eps^g} \, d\mu_g \, .
	\end{equation}
	Recalling that $\pphi_\eps\leq 1$, we have
	\begin{align*}
		|u \, (\divg_g F-g(\nabla f,F)) \, \pphi_\eps \, \chi_{\Om\setminus B_\eps^g}|&\leq \|u\|_{L^\infty(\Om)} \, |\divg_g F-g(\nabla f,F)|\\
		&\leq \|u\|_{L^\infty(\Om)}(|\divg_g F|+|\nabla f|_g |F|_g) \in L^1(\Om,d\mu_g) \, ,
	\end{align*}
	since $|\divg_g F| \in L^1(\Om,d\mu_g), |F|_g \in L^2(\Om,d\mu_g)$ and $|\nabla f|_g \in L^2(\Om,d\mu_g)$ under condition \eqref{condgammadivpoho}, as verified in \eqref{nablafinl2}. Then, we can apply the dominated convergence theorem to \eqref{secdivu}, obtaining \eqref{limintdivgu}.\smallskip\\
	\textbf{Proof of \textit{(ii)}} : Note that
	\begin{align}\label{terzdivu}
		\int_{\Om\setminus B_\eps^g} e^{-f}g(\nabla u,F)\pphi_\eps \, dV_g = \int_{\Om} g(\nabla u,F)\pphi_\eps \chi_{\Om\setminus B_\eps^g}\, d\mu_g
	\end{align}
	Recalling that $u \in L^\infty(\Om)$ and $\pphi_\eps\leq 1$, we observe
	$$|g(\nabla u,F)\pphi_\eps \chi_{\Om\setminus B_\eps^g}|\leq |\nabla u|_g^2 |F|_g^2 \in L^1(\Om,d\mu_g) \, ,$$
	since $|F|\in L^2(\Om,d\mu_g)$ and $u \in W_0^{1,2}(\Om,d\mu_g)\subset W^{1,2}(\Om,d\mu_g)$.
	Thus, by applying the dominated convergence theorem to \eqref{terzdivu}, we conclude that \eqref{limintdivgnabla} holds as well.
\end{proof}
We now establish a Pohozaev-type identity adapted to our setting, derived from \eqref{pohozaev}. 
\begin{proposition}
	Let $\Om$ be a domain satisfying \eqref{domain}, and let $u \in W_0^{1,2}(\Om,d\mu_g)\cap L^\infty(\Om)$ be a solution to \eqref{ourprob} such that $|\nabla u|_g^2 \in L^\infty(\Om)$. Assume that $\alpha$ and $\gamma$ satisfy \eqref{condgammadivpoho}. Then,
	\begin{equation}\label{pohozaev_nc}
		\int_\Om u \,d\mu_g = \frac{n+\alpha}{n+\alpha+2}c^2 |\Om|_f \, .
	\end{equation}
\end{proposition}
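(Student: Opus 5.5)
The plan is to apply the Pohozaev identity of \Cref{pohozaevprop} with an explicit auxiliary function $V$ for which $\nabla^2 V$ is an exact multiple of $g$. The boundary terms are then justified through \Cref{lemmadivthm} instead of the classical divergence theorem.

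\textbf{Step 1: choice of $V$.} The Euclidean position field $x$ is homothetic for $g$: since $g=|x|^{-2\gamma}\delta$, we get $\mathcal L_x g=(2-2\gamma)g$. It is also a gradient field, because $x^\flat=|x|^{-2\gamma}x\cdot dx=d\big(|x|^{2-2\gamma}/(2-2\gamma)\big)$. Set $r=d_g(x,O)=|x|^{1-\gamma}/(1-\gamma)$, as in \Cref{Ofinitedist}, and $V_0=(1-\gamma)r^2/2$. Then $\nabla V_0=x$, which gives $\nabla^2V_0=(1-\gamma)g$ and $\Delta_gV_0=n(1-\gamma)$. Since $f=-\alpha\log r$, we have $g(\nabla f,\nabla V_0)=-\alpha(1-\gamma)$, so $\lapw V_0=(1-\gamma)(n+\alpha)$. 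One can double-check this against \eqref{hess} with $\phi=-\gamma\log|x|$. We therefore take
\[
V=\frac{r^2}{2(n+\alpha)},\qquad \lapw V=1,\qquad \nabla^2V=\frac{1}{n+\alpha}\,g,\qquad |\nabla V|_g=\frac{r}{n+\alpha}.
\]
Note that $V\in C^2(\Om)\cap C^1(\overline\Om^g)$, and $\nabla V$ is bounded on $\Om$ because $\Om$ is bounded.

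\textbf{Step 2: re-derive \eqref{pohozaev}.} We repeat the proof of \Cref{pohozaevprop} verbatim. Each of the three uses of the divergence theorem is replaced by \Cref{lemmadivthm}, and \eqref{intnablau_nc} is used in place of \eqref{intnablau}. This gives
\[
c^2|\Om|_f=\int_\Om u\,d\mu_g+\frac{2}{n+\alpha}\int_\Om|\nabla u|_g^2\,d\mu_g=\frac{n+\alpha+2}{n+\alpha}\int_\Om u\,d\mu_g ,
\]
which is \eqref{pohozaev_nc}.

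\textbf{Step 3: hypotheses of \Cref{lemmadivthm}.} This is the main obstacle. We check the three vector fields in turn.
\begin{itemize}
\item For $F=\nabla V$: $F$ is continuous and bounded, and $\divg_g(e^{-f}F)=e^{-f}$, which is integrable by \Cref{Ofinitedist}.
\item For $F=u\nabla V$: this is bounded since $u\in L^\infty(\Om)$. Its weighted divergence is $e^{-f}\big(g(\nabla u,\nabla V)+u\big)$, which lies in $L^1(d\mu_g)$ because $|\nabla u|_g\in L^\infty(\Om)$.
\item For $F=\frac{|\nabla u|_g^2}{2}\nabla V-g(\nabla V,\nabla u)\nabla u$: since $|\nabla u|_g^2\in L^\infty$ and $|\nabla V|_g$ is bounded, $F$ is bounded, hence $|F|_g\in L^2(\Om,d\mu_g)$ because $|\Om|_f<\infty$. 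By \eqref{divthmpoho1},
\[
e^{f}\divg_g(e^{-f}F)=\frac{|\nabla u|_g^2}{2}+g(\nabla V,\nabla u)-\frac{|\nabla u|_g^2}{n+\alpha},
\]
which is bounded, so the $L^1$ requirement of Case 1 of \Cref{lemmadivthm} holds. In Case 1 what is really needed is $\divg_g(e^{-f}F)\in L^1(dV_g)$. The lemma's proof controls $\divg_g F$ together with $|\nabla f|_g|F|_g$, and the latter is integrable by \eqref{nablafinl2}.
\end{itemize}
In all three cases, condition \eqref{condgammadivpoho} ensures both $\alpha+n-2>0$ (for the cutoff estimate \eqref{measdiffball}) and $|\nabla f|_g\in L^2(\Om,d\mu_g)$. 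The identity \eqref{divthmpoho1} itself holds pointwise in $\Om$ because $u$ is smooth in the interior. Finally, the boundary computation uses $\nabla u=-c\nu$ on the compact $C^{1,s}$ boundary, which stays away from $O$, so that part is unchanged.
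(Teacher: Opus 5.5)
Your proposal is correct and follows the paper's proof essentially verbatim. You use the same auxiliary function $V=d_g(x,O)^2/(2(n+\alpha))$, with $\nabla^2V=\frac{1}{n+\alpha}g$ and $\lapw V=1$, rerun the proof of \Cref{pohozaevprop} with \Cref{lemmadivthm} in place of the divergence theorem, and conclude via \eqref{intnablau_nc}. Your verification is slightly more complete than the paper's: you also justify the integration by parts for $F=u\nabla V$ in \eqref{poho3}, which the paper leaves implicit, and you use boundedness of $|\nabla V|_g=r/(n+\alpha)$ where the paper uses $|x|_g\in L^2(\Om,d\mu_g)$.
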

\begin{remark}\label{choiceV_nc}
	Define
	\begin{equation}\label{V_nc}
		V(x) \coloneqq \frac{d_g(x,O)^2}{2(n+\alpha)} \, .
	\end{equation}
	Recalling that
	$$d_g(x,O)=\frac{|x|^{1-\gamma}}{1-\gamma}$$ from \eqref{grad}, \eqref{hess} and \eqref{lap} we obtain
	\begin{equation}\label{nablaV_nc}
		\nabla V =\frac{x}{(1-\gamma)(n+\alpha)} \, ,
	\end{equation}
	\begin{equation}\label{hessV_nc}
		\nabla^2 V=\frac{1}{(n+\alpha)}\,g\, ,
	\end{equation}
	and
	\begin{equation}\label{lapwV_nc}
		\lapw V=1 \, .
	\end{equation}
	Hence, $V$ satisfies the assumptions of \Cref{pohozaevprop}, and the argument used in the proof of \Cref{pohozaevprop} can be repeated in the same way in this case. The only point requiring verification is that \Cref{lemmadivthm} may be applied in place of the divergence theorem, which holds under suitable regularity assumptions on $u$, as we will show below. 
\end{remark}
\begin{proof}
	Recall that $u\in C^\infty(\Om)\cap C^1(\overline\Om^g)$. As mentioned in \Cref{choiceV_nc}, we can follow the proof of \Cref{pohozaevprop}, with $V$ defined as in \eqref{V_nc}; the only point to check is that \Cref{lemmadivthm} justifies the application of the divergence theorem. In particular, we apply it after integrating the differential equality \eqref{divthmpoho1}, and directly in the integral identity \eqref{poho2}. \\Using the notation of \Cref{lemmadivthm}, the vector field in the first case is
	$$F=\frac{|\nabla u|_g^2}{2}\, \nabla V-g(\nabla V,\nabla u)\nabla u \, .$$
	Note that, from \eqref{nablaV_nc} and $u \in C^1(\overline\Om^g)$, it follows $F\in C^0(\overline\Om^g)$. We verify that $|F|_g \in L^2(\Om,d\mu_g)$ and $\divg_gF\in L^1(\Om,d\mu_g)$. Note that
	$$|F|_g\leq \frac 3 2 \||\nabla u|_g^2\|_{L^\infty(\Om)} |\nabla V|_g \, ,$$
	and thus, from \eqref{nablaV_nc} we obtain
	\begin{equation}\label{Fineq}
		|F|_g\leq \frac {3}{2(n+\alpha)} \||\nabla u|_g^2\|_{L^\infty(\Om)} |x|_g \, .
	\end{equation}
	Since $g_{ij}=|x|^{-2\gamma}\delta_{ij}$, 
	we have
	\begin{equation}\label{modulusx}
		|x|_g =|x|^{1-\gamma} \, ,
	\end{equation}
	and hence from \eqref{dist0} it follows that
	\begin{equation}\label{distmodulus}
		d_g(x,O)=\frac{|x|_g}{1-\gamma} \, .
	\end{equation}
	Thus, since $\Om$ is bounded with respect to $g$ and $|\Om|_f$ is finite, we conclude that 
	\begin{equation}\label{modulusxl2}
		|x|_g\in L^2(\Om,d\mu_g)\, ,
	\end{equation}
	which, together with \eqref{Fineq}, implies
	$$|F|_g \in L^2(\Om,d\mu_g) \, .$$ 
	Next, note that 
	$$\divg_g F =e^{f}\divg_g (e^{-f}F)+g(F,\nabla f) \, ,$$
	so that, using \eqref{divthmpoho1} with $\lapw V=1$, we obtain
	\begin{equation}\label{divF}
		\divg_g F =\frac{|\nabla u|_g^2}{2}\left(1+g(\nabla V,\nabla f)\right)-\nabla^2V(\nabla u,\nabla u)+g(\nabla V,\nabla u)(1-g(\nabla u,\nabla f))\, .
	\end{equation}
	Since $f(x)=-\alpha(1-\gamma) \log |x|-\alpha\log(1-\gamma)$, from \eqref{grad} and \eqref{modulusx} it follows that
	\begin{equation}\label{nablaf}
		\nabla f = -\alpha(1-\gamma)\frac{x}{|x|_g^2} \, .
	\end{equation}
	Substituting \eqref{nablaV_nc}, \eqref{hessV_nc} and \eqref{nablaf} into \eqref{divF}, we get
	$$|\divg_g F| \leq \frac{|\nabla u|_g^2}{2}\left(1+\frac{\alpha}{n+\alpha}\right)+\frac{1}{(n+\alpha)(1-\gamma)}\,|x|_g |\nabla u|_g +\frac{\alpha+1}{(n+\alpha)}\, |\nabla u|_g^2 \in L^1(\Om,d\mu_g) \, ,$$
	since both $|\nabla u|_g, |x|_g$ belong to $L^2(\Om,d\mu_g)$. Thus, \Cref{lemmadivthm} applies.\\
	It remains to justify the use of the divergence theorem in identity \eqref{poho2}. In this case, 
	$$F=\nabla V$$
	and from \eqref{nablaV_nc} together with \eqref{modulusxl2} it immediately follows that $|F|_g\in L^2(\Om,d\mu_g)$. Moreover,
	$$\divg_g F=\divg_g(\nabla V)=\frac{\divg_g(x)}{n+\alpha} \, .$$
	Using \eqref{div} and recalling that $\phi(x)=-\gamma \log|x|$, we obtain 
	$$\divg_g F=\frac{n(1-\gamma)}{n+\alpha}\, .$$ 
	Therefore, we have $|\divg_g F| \in L^1(\Om,d\mu_g)$ and \Cref{lemmadivthm} applies in this case as well. \\
	Hence, we can repeat the proof of \Cref{pohozaevprop} to obtain the Pohozaev identity \eqref{pohozaev}. Substituting \eqref{hessV_nc} into \eqref{pohozaev} yields
	$$c^2|\Om|_f= \int_\Om u\, d\mu_g +\frac{2}{n+\alpha}\int_\Om u \, d\mu_g \, ,$$
	which concludes the proof.
\end{proof}
Finally, we discuss the nature of the energy space $W^{1,2}_0(\Om,d\mu_g)$. Recall that it is defined as the closure of $C^\infty_c(\Om)$ with respect to the norm in \eqref{weightnorm}. When the origin belongs to $\overline \Om$, one may wonder whether adopting this functional framework imposes any additional constraint on the behavior of solutions to \eqref{ourprob} near the origin. Nevertheless, this is not the case under certain conditions on the parameters $\alpha$ and $\gamma$, as stated in the following proposition.
\begin{proposition}\label{densità}
	Let $\Om$ satisfy \eqref{domain} and assume that $O \in \overline \Om$.  If $\alpha$ and $\gamma$ satisfy \eqref{condgammadivpoho}, then $$W_0^{1,2}(\Om,d\mu_g)=W_0^{1,2}(\Om\cup\{O\},d\mu_g) \, .$$
\end{proposition}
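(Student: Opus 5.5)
Since $\Om\cup\{O\}$ contains $\Om$, the inclusion $W_0^{1,2}(\Om,d\mu_g)\subseteq W_0^{1,2}(\Om\cup\{O\},d\mu_g)$ is immediate: $C^\infty_c(\Om)\subset C^\infty_c(\Om\cup\{O\})$, and the norm \eqref{weightnorm} is the same. So the content of the statement is the reverse inclusion, i.e. that the point $O$ has zero weighted $2$-capacity. By density, it suffices to approximate any $\psi\in C^\infty_c(\Om\cup\{O\})$ in the norm \eqref{weightnorm} by functions in $C^\infty_c(\Om)$. Here $C^\infty_c(\Om\cup\{O\})$ means smooth (in the Euclidean sense) functions with compact support in the Euclidean set $\Om\cup\{O\}$, possibly nonzero at $O$.

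I would reuse the cutoff from the proof of \Cref{lemmadivthm}. Fix $\eps>0$ small and let $\pphi_\eps$ satisfy $\pphi_\eps\equiv 0$ on $B_\eps^g$, $\pphi_\eps\equiv1$ outside $B_{2\eps}^g$, $0\le\pphi_\eps\le1$ and $|\nabla\pphi_\eps|_g\le 2/\eps$. For instance, take $\pphi_\eps=\eta(d_g(x,O)/\eps)$ with a fixed smooth $\eta$; this is smooth away from $O$ by \eqref{dist0}. Set $\psi_\eps\coloneqq\pphi_\eps\psi\in C^\infty_c(\Om)$. I would then estimate the $L^2$ part of the error by
\[
\int_\Om|\psi-\psi_\eps|^2\,d\mu_g\le\|\psi\|_\infty^2\,|B_{2\eps}^g|_f\to0 ,
\]
using \eqref{wvolumeball} and $\alpha+n>2>0$.

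For the gradient part, write $\nabla(\psi-\psi_\eps)=(1-\pphi_\eps)\nabla\psi-\psi\nabla\pphi_\eps$. The first term satisfies
\[
\int_{B^g_{2\eps}}|\nabla\psi|_g^2\,d\mu_g\to0
\]
by dominated convergence, once we know $|\nabla\psi|_g^2=|x|^{2\gamma}|D\psi|^2$ is in $L^1(\Om,d\mu_g)$. This holds because, by the computation in \eqref{measwBR}, it is bounded by a multiple of $|x|^{2\gamma+\alpha(1-\gamma)-n\gamma}$ in Euclidean coordinates. That power is integrable near $O$ since $2\gamma+(\alpha+n)(1-\gamma)-n\gamma>-n$ is equivalent to $(1-\gamma)(\alpha+n-2)>-2$, which is true under \eqref{condgammadivpoho}.

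The key term is the second one:
\[
\int_\Om\psi^2|\nabla\pphi_\eps|_g^2\,d\mu_g\le\frac{4\|\psi\|_\infty^2}{\eps^2}\,|B^g_{2\eps}\setminus B^g_\eps|_f\sim\eps^{\alpha+n-2}\to0 ,
\]
exactly as in \eqref{measdiffball}, precisely because $\alpha>2-n$. This is the main step and the only place where the hypothesis is sharp; the rest is routine. Combining the three estimates gives $\psi_\eps\to\psi$ in $W^{1,2}(\Om,d\mu_g)$. Closing under the norm then yields the reverse inclusion.
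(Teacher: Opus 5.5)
Your proof is correct and follows the same route as the paper. You cut off $\psi$ near $O$ with a $d_g$-radial cutoff satisfying $|\nabla\pphi_\eps|_g\le 2/\eps$, and you kill the $\psi\nabla\pphi_\eps$ term via $\eps^{-2}|B^g_{2\eps}\setminus B^g_\eps|_f\sim\eps^{\alpha+n-2}$, which is exactly where $\alpha>2-n$ enters. For the $(1-\pphi_\eps)\nabla\psi$ term you use integrability of $|x|^{2\gamma}|D\psi|^2$ against $d\mu_g$, which is more careful than the paper's appeal to $|\nabla\psi|_g^2\in L^\infty$: that bound fails for $\gamma<0$ when $D\psi(O)\neq 0$. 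One typo to fix: the displayed condition should read $2\gamma+\alpha(1-\gamma)-n\gamma>-n$, i.e.\ $2\gamma+(\alpha+n)(1-\gamma)>0$.
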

\begin{proof}
	The inclusion $W^{1,2}_0(\Om,d\mu_g)\subseteq W^{1,2}_0(\Om\cup \{O\},d\mu_g)$ is obvious. To obtain the other one, it is enough to prove that $\forall u \in C^\infty_c(\Om \,\cup\, \{O\})$, $\forall \eps>0$ there exists $g_\eps=g(u,\eps)\in C^\infty_c(\Om)$ such that
	$$\|u-g_\eps\|_{W_0^{1,2}(\Om, d\mu_g)}<\eps \, .$$
	Thus, let $u\in C^\infty_c(\Om\cup\{O\})$. Fix $\eps>0$. Suppose that $B_{4\eps}^g$ is contained in $\Om$, possibly after choosing a smaller $\eps$, where $B_{4\eps}^g$ is defined as in \eqref{defball}. Let $\varphi_\eps \in C^\infty_c(\Om\cup\{O\})$ be a cut-off function with $\varphi_\eps \equiv 0$ in $\Om\setminus B_{2\eps}^g$, $\varphi_\eps \equiv 1$ in $B_\eps^g$, $0\leq \varphi_\eps \leq 1$, and $|\nabla \varphi_\eps|_g\leq \frac 2 \eps$. We choose $g_\eps=(1-\varphi_\eps)u \in C^\infty_c(\Om)$ and we observe that
	\begin{align}
		\|u-g_\eps\|_{W_0^{1,2}(\Om, d\mu_g)}^2&=\|u\varphi_\eps\|_{W_0^{1,2}(\Om, d\mu_g)}^2\nonumber\\
		&=\int_\Om u^2 \varphi_\eps^2 \, d\mu_g +\int_\Om |\nabla(u\varphi_\eps)|^2 \, d\mu_g \, . \label{density}
	\end{align}
	If both the integral terms in \eqref{density} vanish as $\eps \rightarrow 0$, then the thesis follows. \\
	Hence, recalling that $u\in L^\infty(\Om)$, $\varphi_\eps \leq 1$ and $\varphi_\eps \equiv 0$ in $\Om\setminus B_{2\eps}^g$, we have 
	$$\int_\Om \varphi_\eps^2 u^2 \, d\mu_g \leq \|u\|_{L^\infty(\Om)}^2\int_{B_{2\eps}^g}\, d\mu_g = \|u\|_{L^\infty(\Om)}^2\int_{B_{2\eps}^g} e^{-f}\, dV_g  \rightarrow 0 \quad \text{as} \ \eps \rightarrow 0 \, ,$$
	since $e^{-f} \in L^1(\Om,dV_g)$.\\
	Moreover, since $|\nabla \varphi_\eps|_g\leq \frac 2 \eps$ and $|\nabla u|_g^2\in L^\infty(\Om)$, we obtain that
	\begin{equation}
		\begin{aligned}\label{nablaupphi}
			\int_\Om |\nabla(u\varphi_\eps)|_g^2 \, d\mu_g &= \int_{B_{2\eps}^g\setminus B_{\eps}^g} |\nabla\varphi_\eps|_g^2 \, u^2 \, d\mu_g+\int_{B_{2\eps}^g}\varphi_\eps^2 |\nabla u|_g^2 \,  d\mu_g\\
			&\leq 4\|u\|_{L^\infty(\Om)}^2 \frac{1}{\eps^2}|B_{2\eps}^g\setminus B_{\eps}^g|_f +\|\nabla u\|_{L^\infty(\Om)}|B_{2\eps}^g|_f \, .
		\end{aligned}
	\end{equation}
	From \eqref{wvolumeball}, we have 
	\begin{equation*}
		\frac{1}{\eps^2}|B_{2\eps}^g\setminus B_{\eps}^g|_f\sim \eps^{\alpha+n-2} \to 0 \quad \text{as} \ \eps \to 0 \, ,
	\end{equation*}
	since, by assumption, $\alpha>2-n$.
	Moreover, from $e^{-f} \in L^1(\Om,dV_g)$ it follows that 
	$$|B_{2\eps}^g|_f \to 0 \quad \text{as} \ \eps \to 0 \, .$$
	Thus, from \eqref{nablaupphi} we conclude that
	$$\int_\Om |\nabla (u\pphi_\eps)|_g^2 \, d\mu_g \to 0 \quad \text{as} \ \eps \to 0 \, ,$$
	and the proof is complete.
\end{proof}
\subsection{Proof of Theorem \ref{main_nc}}\label{proofsect}
The proof is similar to that of \Cref{main} in the compact case. In \Cref{choiceV_nc} we noted that
$$V(x)=\frac{d_g(x,O)^2}{n+\alpha}$$ satisfies $\lapw V=1$ and 
\begin{equation}\label{hessVhp_nc}
	\nabla^2 V=\frac \beta n g \, , \quad \text{with} \ \beta=\frac{n}{n+\alpha} \, .
\end{equation}
As in the compact case, we start from the differential identity \eqref{idciraolo}, integrate and apply the Pohozaev identity \eqref{pohozaev_nc} - which corresponds to \eqref{pohozaev} together with \eqref{hessVhp_nc}. Under the parameter's range \eqref{parcond}, we have $\beta<1$, and thus the weighted Young inequality \eqref{youngw} can be applied to recover \eqref{ultima}. Moreover, assumption \eqref{ricwgeq0} is satisfied under \eqref{parcond}, so that, as in the compact case, we deduce \eqref{scalarprod} and \eqref{hessu}, from which we can conclude.\\
Although the proof begins in the same way, we provide all details for completeness. Indeed, since $\overline\Om^g$ may not be compact, as observed in \Cref{preliminarynoncompl}, in this case we must justify the use of the divergence theorem by appealing to \Cref{lemmadivthm}. Moreover, we will constructively deduce the range \eqref{parcond}. 
\begin{proof}[Proof of \Cref{main_nc}]
	Assume $\gamma<1$. This condition is imposed only to guarantee that the origin $O$ lies at finite distance, as we observed in \Cref{preliminarynoncompl}. Furthermore, we assume that 
	\begin{equation}\label{gammainiz}
		\alpha>2-n \, ,
	\end{equation}
	in order to apply the divergence theorem given in \Cref{lemmadivthm} and the Pohozaev identity \eqref{pohozaev_nc}.\\
	Recall that $u \in C^2(\Om)\cap C^1(\overline \Om^g)$. We observe that in $\Om$ it holds
	\begin{align}
		& |\nabla ^2u|_g^2-(\lapw u)^2 \nonumber\\
		&= |\nabla ^2 u|_g^2-(\lap_g u-g(\nabla f,\nabla u))^2\nonumber\\
		&=\left|\nabla ^2 u-\frac{\lap_g u}{n}g\right|_g^2+2\,g(\nabla f,\nabla u)\lap_g u-g(\nabla f,\nabla u)^2-\frac{n-1}{n}(\lap_g u)^2\nonumber\\
		&=\left|\nabla ^2 u-\frac{\lap_g u}{n}g\right|_g^2-\frac{n-1}{n} \Big(\lapw u+g(\nabla f,\nabla u)\Big)^2+2\,g(\nabla f,\nabla u)\lap_g u-g(\nabla f,\nabla u)^2 \, . \nonumber 
	\end{align}
	Using the identity
	$$\lap_g u= \lapw u+g(\nabla f,\nabla u) \, ,$$
	we deduce that
	\begin{equation}\label{id_nc}
		\begin{aligned}
			& |\nabla ^2u|_g^2-(\lapw u)^2\\
			&=\left|\nabla ^2 u-\frac{\lap_g u}{n}g\right|_g^2-\frac{n-1}{n}(\lapw u)^2 +\frac 2 n \,g(\nabla f,\nabla u)\lapw u+\frac 1 n \, g(\nabla f,\nabla u)^2 \, .
		\end{aligned}
	\end{equation}
	Thus, integrating identity \eqref{idciraolo} and using \eqref{id_nc}, we obtain
	\begin{equation*}
		\begin{split}
			&\int_\Om \divg_g \Big\{e^{-f}\left(u\nabla \left(\frac{|\nabla u|_g^2}{2}\right)-u\lapw u \nabla u-\frac{|\nabla u|_g^2}{2}\nabla u\right)\Big\}\,dV_g\\
			&=\int_\Om e^{-f}u\,\Big\{\left|\nabla ^2 u-\frac{\lap_g u}{n}g\right|_g^2-\frac{n-1}{n}(\lapw u)^2+\frac 2 n \, g(\nabla f,\nabla u)\lapw u +\frac 1 n \, g(\nabla f,\nabla u)^2 \Big\}\, dV_g\\
			&+\int_\Om e^{-f}\Big\{-\frac 3 2 |\nabla u|_g^2 \, \lapw u + u \, (\ric_g+\nabla ^2 f)[\nabla u,\nabla u]\Big\}\, dV_g \, .
		\end{split}
	\end{equation*}
	We want to apply \Cref{lemmadivthm} to the left-hand side of the above identity. For this,
	let us define the vector field 
	\begin{equation}\label{F}
		F=u\nabla \left(\frac{|\nabla u|_g^2}{2}\right)-u\lapw u \nabla u-\frac{|\nabla u|_g^2}{2}\nabla u \, .
	\end{equation}
	Since $u \in C^1(\overline\Om^g)$, it follows that $F \in C^0(\overline\Om^g)$. Recalling that $\lapw u\equiv -1$ in $\Om$, and using the assumptions $u \in L^\infty(\Om)$, $|\nabla u|_g^2 \in L^\infty(\Om)$, we estimate
	$$|F|_g \leq \|u\|_{L^\infty(\Om)}\left(\frac 1 2 |\nabla |\nabla u|_g^2 |_g+|\nabla u|_g\right)+\frac 1 2 \||\nabla u|_g^2\|_{L^\infty(\Om)}|\nabla u|_g \, .$$
	Since $\nabla |\nabla u|_g^2 \in L^2(\Om,d\mu_g)$ and $u\in W_0^{1,2}(\Om,d\mu_g)\subset W^{1,2}(\Om,d\mu_g)$, we conclude that $|F|_g \in L^2(\Om,d\mu_g)$.
	Suppose now that the right-hand side of \eqref{idciraolo} is nonnegative, so that $\divg_g (e^{-f}F)\geq 0$; we will verify this later by explicitly determining the conditions on the parameters $\alpha$ and $\gamma$ ensuring it holds. \\Thus, we may apply \Cref{lemmadivthm} and we find
	\begin{align}
		-\int_{\p\Om} \frac{|\nabla u|_g^2}{2}\, g(\nabla u,\nu)\,d\tilde \mu_g &= \int_\Om \Big( -\frac{n-1}{n}\,u+\frac 3 2 |\nabla u|_g^2\Big)\, d\mu_g \nonumber\\
		&\begin{multlined}[t]+\int_\Om u \, \Big\{ \Big|\nabla ^2 u-\frac{\lap_g u}{n}g\Big|_g^2 +\frac 2 n \,g(\nabla f,\nabla u)\lapw u \\+\frac 1 n \,g(\nabla f,\nabla u)^2 + (\ric_g+\nabla ^2 f)[\nabla u,\nabla u]\Big\}\, d\mu_g \, . \label{prima_nc}
		\end{multlined} 
	\end{align}
	Using \eqref{cweight_nc} and the fact that $u_\nu =-c$ on $\p\Om$, the left-hand side of \eqref{prima_nc} simplifies to $\frac {c^2}{2}|\Om|_f$. Moreover, from \eqref{intnablau_nc}, we have that
	\begin{align*}
		\int_\Om \Big( -\frac{n-1}{n}u+\frac 3 2 |\nabla u|_g^2 \Big) \, d\mu_g = \frac{n+2}{2n}\int_\Om u \, d\mu_g \, .
	\end{align*}
	Substituting into \eqref{prima_nc}, we get
	\begin{equation}\label{seconda_nc}
		\begin{multlined}
			\frac{c^2}{2}|\Omega|_f = \frac{n+2}{2n} \int_\Omega u \, d\mu_g 
			+ \int_\Omega u \, \Big\{ 
			\left|\nabla^2 u - \frac{\Delta_g u}{n} g \right|_g^2 
			+ \frac{2}{n} g(\nabla f, \nabla u)\lapw u \\
			+ \frac{1}{n} g(\nabla f, \nabla u)^2 
			+ (\mathrm{Ric}_g + \nabla^2 f)[\nabla u, \nabla u] 
			\Big\} \, d\mu_g \, . 
		\end{multlined}
	\end{equation}
	Next, applying the Pohozaev identity \eqref{pohozaev_nc}, we get
	\begin{equation}\label{postpoho_nc}
		\begin{multlined}
			0=\frac{\alpha}{n(n+\alpha)} \int_\Om u \, d\mu_g + \int_\Omega u \, \Big\{ 
			\left|\nabla^2 u - \frac{\Delta_g u}{n} g \right|_g^2 
			+ \frac{2}{n} g(\nabla f, \nabla u)\lapw u \\
			+ \frac{1}{n} g(\nabla f, \nabla u)^2 
			+ (\mathrm{Ric}_g + \nabla^2 f)[\nabla u, \nabla u] 
			\Big\} \, d\mu_g \, . 
		\end{multlined}
	\end{equation}
	As already mentioned, the argument up to this point follows closely that of the compact case. Indeed, identity \eqref{postpoho_nc} corresponds to relation \eqref{princ} in the compact setting, with
	\begin{equation}\label{beta_nc}
		\beta =\frac{n}{n+\alpha} \, .
	\end{equation}
	However, while \eqref{princ} holds only as an inequality, here the equality is achieved in \eqref{postpoho_nc}. This follows from the fact that, in the present setting, \eqref{hessV} holds as an identity, as shown in \eqref{hessVhp_nc}.\\
	Proceeding as in the compact case, we apply the weighted Young inequality $2|ab|\leq k a^2+\frac 1 k b^2$ with $a=\frac{1}{\sqrt n}\lapw u$, $b=\frac{1}{\sqrt n} g(\nabla f,\nabla u)$, and, from \eqref{postpoho_nc}, we find
	\begin{equation}\label{youngw_nc}
		\frac 2 n \, g(\nabla f,\nabla u)\lapw u \geq -\frac k n (\lapw u)^2-\frac{1}{kn}g(\nabla f,\nabla u)^2 \, ,
	\end{equation}
	where $k>0$ will be chosen appropriately. Note that the equality is attained if and only if 
	$$k \cdot \lapw u = -g(\nabla f,\nabla u) \, ,$$ 
	i.e., 
	$$k=g(\nabla f,\nabla u) \, .$$
	Thus, plugging \eqref{youngw_nc} into \eqref{postpoho_nc}, we obtain
	\begin{equation}\label{postyoung_nc}
		\begin{multlined}[t]
			0 \geq  \left(\frac{\alpha}{n(n+\alpha)}-\frac k n \right)\int_\Om u \, d\mu_g +\int_\Om u\, \Big|\nabla ^2 u-\frac{\lap_g u}{n}g\Big|_g^2 
			+\int_\Om u\, \Big\{\frac 1 n\Big(1-\frac{1}{k}\Big)\,g(\nabla f,\nabla u)^2\\+(\ric_g +\nabla^2f)[\nabla u,\nabla u]\Big\}\,d\mu_g \, . 
		\end{multlined}
	\end{equation}
	Now, we choose $k > 0$ such that the constant multiplying $\displaystyle\int_\Om u\, d\mu_g$ in \eqref{postyoung_nc} vanishes, that is
	\begin{equation}\label{k_nc}
		k=\frac{\alpha}{n+\alpha}\, .
	\end{equation}
	Recall that $k$ has to be positive, as it represents the coefficient used in the Young inequality \eqref{youngw_nc}: it turns out that 
	\begin{equation}\label{kpositive}
		\alpha>0 \quad \mbox{or} \quad \alpha<-n \, .
	\end{equation}
	Thus, combining \eqref{kpositive} and \eqref{gammainiz}, we get 
	\begin{equation}\label{alphagamma1}
		\alpha>0 \quad \text{and} \quad \gamma<1 \, .
	\end{equation}
	Note that this implies $0 < \beta < 1$, where $\beta$ is defined by \eqref{beta_nc}. From \eqref{postyoung_nc}, and observing that $$\frac 1 n \left(1-\frac 1 k\right)=-\frac{1}{\alpha} \, , \quad \text{and} \quad g(\nabla f,\nabla u)^2=(df \otimes df) [\nabla u,\nabla u] \, ,$$
	we get
	\begin{equation}\label{ultima_nc}
		0 \geq \int_\Om u \,\Big|\nabla ^2 u-\frac{\lap_g u}{n}g\Big|_g^2 \, d\mu_g  \\
		+\int_\Om u \, (\ric_g +\nabla^2f-\frac{1}{\alpha} \, df \otimes df)[\nabla u,\nabla u]\,d\mu_g \, . 
	\end{equation}
	Note that, since $u$ is nonnegative in $\Om$ and 	
	\begin{equation}\label{hessu_nc}
		\Big |\nabla^2 u -\frac{\lap_g u}{n}g\Big |_g \geq 0\,,
	\end{equation}
	the first integral on the right hand side in \eqref{ultima_nc} is nonnegative. Our strategy aims to find conditions on parameters $\alpha$ and $\gamma$ such that 
	\begin{equation}\label{matrixgeq0}
		\ric_g +\nabla ^2f-\frac{1}{\alpha} \, df \otimes df 
		\geq 0 \, ,
	\end{equation} 
	which then ensures that
	\begin{equation}\label{matrixgeq0uu}
	(\ric_g +\nabla ^2f-\frac{1}{\alpha} \, df \otimes df )[\nabla u,\nabla u]\geq 0 \, .
	\end{equation} 
	This will enable us to estimate both the integral terms on the right hand side of  \eqref{ultima_nc} from below by zero. Recalling that $u>0$ in $\Om$, this implies that equality is attained in all the inequalities \eqref{youngw_nc}, \eqref{hessu_nc}, and \eqref{matrixgeq0uu}, and this will lead to the conclusion.\\ Note that this also ensures that $\divg_g F\geq 0$. Indeed, from \eqref{idciraolo} we have $$\divg_g (e^{-f}F) =e^{-f}\left\{u\left[|\nabla ^2u|_g^2-(\lapw u)^2\right]-\frac 3 2 |\nabla u|^2\lapw u+u\,(\ric_g +\nabla ^2f)[\nabla u,\nabla u]\right\} \, ,$$
	which coincides with the integrand in \eqref{seconda_nc}. Then, under condition \eqref{matrixgeq0}, it is indeed nonnegative: we had previously estimated it from below using a weighted Young’s inequality, and we now impose that the resulting lower bound is nonnegative. 
	\par From now on, all the equalities will be set in $\Om$. Let's verify when \eqref{matrixgeq0} holds. Recalling \eqref{ric}, \eqref{hess} and $\phi(x)=-\gamma\ln |x|$, $f(x)=-\alpha \ln |x|$, straightforward calculations show that 
	\begin{align*}
		&(\ric_g)_{ij} = -\frac{(n-2)\gamma (\gamma -2)}{|x|^2}\Big(\delta_{ij}-\frac{x_ix_j}{|x|^2}\Big) \, ,\\
		&(\nabla ^2 f)_{ij}= -\frac{\alpha(1-\gamma)^2}{|x|^2}\Big(\delta_{ij}-2\,\frac{x_ix_j}{|x|^2}\Big) \, ,\\
		&(df\otimes df)_{ij}=\frac{\alpha^2(1-\gamma)^2}{|x|^4}x_ix_j \, .
	\end{align*}
	Therefore
	\begin{equation}\label{matrix}
		\Big(\ric_g +\nabla ^2f-\frac{1}{\alpha}\, df \otimes df \Big)_{ij}
		=-\frac{(n-2)\gamma(\gamma-2)+\alpha(1-\gamma)^2}{|x|^2}\Big(\delta_{ij}-\frac{x_ix_j}{|x|^2}\Big) \, ,
	\end{equation}
	and \eqref{matrixgeq0} becomes 
	\begin{equation}\label{alphagamma}
		(n-2)\gamma (\gamma-2)+\alpha (1-\gamma)^2\leq 0\, .
	\end{equation}
	Hence, \eqref{alphagamma1} together with \eqref{alphagamma} lead to 
	\begin{equation*}
		\alpha>0 \quad \text{and} \quad \gamma_1 \leq \gamma < 1 \, ,
	\end{equation*}
	where $$\gamma_1 = 1-\sqrt{\frac{n-2}{\alpha+n-2}} \, ,$$ 
	i.e., the range given by \eqref{parcond}; note that this is nonempty, since $\gamma_1<1$. Thus, from now on we assume \eqref{parcond} satisfied.\\
	From \eqref{youngw_nc}, \eqref{matrixgeq0} and \eqref{hessu_nc}, we find respectively
	\begin{equation}\label{firstcond_nc}
		g(\nabla f,\nabla u)=\frac{\alpha}{n+\alpha}\,,
	\end{equation}
	\begin{equation}\label{secondcond_nc}
		-\frac{(n-2)\gamma(\gamma-2)+\alpha(1-\gamma)^2}{|x|^2}\Big(\delta_{ij}-\frac{x_ix_j}{|x|^2}\Big)(\nabla u)^i (\nabla u)^j= 0\, ,
	\end{equation}
	and
	\begin{equation*}
		\nabla ^2 u=\frac{\lap_g u}{n}g \, .
	\end{equation*}
	From \eqref{firstcond_nc}, we get 
	\begin{equation}\label{nablau_nc}
		\nabla u = -\frac{x}{(1-\gamma)(n+\alpha)}\, ,
	\end{equation}
	and from \eqref{grad} it follows that 
	$$Du= - \frac{|x|^{-2\gamma}x}{(1-\gamma)(n+\alpha)} \, .$$
	Hence, we deduce that $u$ has the form 
	$$	u(x)=C-\frac{|x|^{2-2\gamma}}{2(1-\gamma)^2(n+\alpha)} \quad \text{in} \ \Om \, ,$$
	for some constant $C$. Using \eqref{dist0}
	and imposing the boundary condition $u=0$ on $\partial\Om$, we conclude that 
	\begin{equation}\label{solution}
		u(x)=\frac{R^2-d_g(x,O)^2}{2(n+\alpha)} \quad \text{in} \ \Om \, ,
	\end{equation}
	and $\Om=B_R^g$.
	\par Now, we have to verify that $u$ given by \eqref{solution} belongs to the energy space $W^{1,2}_0(\Om,d\mu_g)$ and satisfies the regularity assumptions stated in \eqref{regassu}. Since $\Om$ is a ball of radius $R$ centered at the origin, it follows that $u \in L^\infty(\Om)$. Moreover, using \eqref{distmodulus} and the expression for $\nabla u$ given in \eqref{nablau_nc}, we find that
	$$|\nabla u|_g^2 =\frac{d_g(x,O)^2}{(n+\alpha)^2} \in L^\infty(\Om) ,$$
	and thus, using \eqref{grad} and \eqref{dist0}, we have
	$$\nabla |\nabla u|_g^2 =\frac{2x}{(1-\gamma)(n+\alpha)^2} \in L^2(\Om,d\mu_g) \, ,$$
	since $|x|_g \in L^2(\Om,d\mu_g)$, as already noted in \eqref{modulusxl2}. \\Finally, we show that $u\in W^{1,2}_0(\Om,d\mu_g)$. The proof follows the same argument as in \Cref{densità}. Let $\eps>0$ be fixed small enough such that $B_{4\eps}^g$ is contained in $\Om$, where $B_{4\eps}^g$ is defined as in \eqref{defball}. Let $\varphi_\eps \in C^\infty_c(\Om)$ be a cut-off function with $\varphi_\eps \equiv 0$ in $\Om\setminus B_{2\eps}^g$, $\varphi_\eps \equiv 1$ in $B_\eps^g$, $0\leq \varphi_\eps \leq 1$, and $|\nabla \varphi_\eps|_g\leq \frac 2 \eps$. We choose $g_\eps=(1-\varphi_\eps)u \in C^\infty_c(\Om)$ and we observe that
	\begin{align}
		\|u-g_\eps\|_{W_0^{1,2}(\Om,d\mu_g)}^2&=\|u\varphi_\eps\|_{W_0^{1,2}(\Om,d\mu_g)}^2\nonumber\\
		&=\int_\Om u^2 \varphi_\eps^2 \, d\mu_g +\int_\Om |\nabla(u\varphi_\eps)|_g^2 \, d\mu_g \, . \label{density1}
	\end{align}
	If both the integral terms in \eqref{density1} vanish as $\eps \to 0$, then the thesis follows. \\ Recalling that $u \in L^\infty(\Om)$ and $u>0$ in $\Om$, we deduce that $u^2\in L^\infty(\Om)$.
	Hence, since $\varphi_\eps \leq 1$ and $\varphi_\eps \equiv 0$ in $\Om\setminus B_{2\eps}^g$, we have 
	$$\int_\Om \varphi_\eps^2 u^2 \, d\mu_g \leq \|u\|_{L^\infty(\Om)}^2 |B_{2\eps}^g|_f\to 0 \quad \text{as} \ \eps \to 0 \, .$$
	Moreover, since $|\nabla \varphi_\eps|_g\leq \frac 2 \eps$ and $|\nabla u|_g^2 \in L^\infty(\Om)$, we obtain that
	\begin{align*}
		\int_\Om |\nabla (u\varphi_\eps)|_g^2 \, d\mu_g &= \int_\Om |\nabla\varphi_\eps|_g^2 \,  u^2 \, d\mu_g+\int_\Om \varphi_\eps^2 \, |\nabla u|_g^2 \, d\mu_g\\
		&\leq 4\|u\|_{L^\infty(\Om)}^2\, \frac{|B_{2\eps}^g|_f}{\eps^2}+\||\nabla u|_g\|_{L^\infty(\Om)}^2|B_{2\eps}^g|_f \to 0 \quad \text{as} \ \eps \to 0 \,.
	\end{align*}
\end{proof}
\begin{remark}
	Note that for $n=2$, the parameter ranges defined by \eqref{alphagamma} and \eqref{alphagamma1} do not overlap, resulting in an empty intersection. Therefore, the proof cannot be concluded in this case.
\end{remark}
\begin{remark}\label{gammamin1}
	Note that the assumption $\gamma<1$ is not restrictive. Indeed, for $\gamma>1$ one can still obtain a Serrin-type rigidity result for the solution $u$ to \eqref{ourprob} under the assumptions of \Cref{main}, for a suitable range of $\alpha$ and $\gamma$. More precisely, \Cref{Ofinitedist} implies that, if $\gamma<1$, then the origin lies at finite distance. Analogously, one can prove that, in this case,
	$$d_g(x,+\infty)=+\infty \quad \forall x \in \R^n\setminus\{O\} \, ,$$
	where
	$$
	d_g(x,+\infty)\coloneq \inf\left\{L(\sigma): \sigma:[0,1)\to \R^n\setminus\{O\},\ \sigma(0)=x,\ \lim_{t\to1^-}|\sigma(t)|=+\infty\right\}\, .
	$$  
	Arguing in the same way, one can observe that, if $\gamma>1$, the roles of the origin and infinity are interchanged. In particular, 
	$$
	d_g(x,O)=+\infty \quad \forall x \in \mathbb{R}^n\setminus\{O\}
	$$
	and
	\begin{equation}\label{distinfty}
		d_g(x,+\infty)=\frac{|x|^{1-\gamma}}{\gamma-1}<+\infty \quad \forall x \in \mathbb{R}^n\setminus\{O\}\, 
	\end{equation}
	Hence, in this case, infinity lies at finite distance. Accordingly, the weight becomes 
	\begin{equation}\label{weightgammageq1}
		e^{-f}\coloneq d_g(x,+\infty)^\alpha \, .
	\end{equation}
	For $R>0$, define
	$$
	B_R^g(\infty)\coloneq \left\{x \in \mathbb{R}^n\setminus\{O\}: d_g(x,+\infty)<R\right\}\, .
	$$
	From \eqref{distinfty}, it is straightforward to verify that
	\begin{equation}\label{ballinfty}
		B_R^g(\infty)=\mathbb{R}^n\setminus \overline{B_{R^*}^{\delta}}, 
		\qquad \text{with } 
		R^*\coloneq \left(\frac{1}{R(\gamma-1)}\right)^{\frac{1}{\gamma-1}}.
	\end{equation}
	\par Let $\Omega$ and $u$ be a solution to \eqref{ourprob}, with $e^{-f}$ defined as in \eqref{weightgammageq1}, satisfying the assumptions of \Cref{main}.
	The argument then proceeds as in the case $\gamma<1$, but leads to a different admissible range of parameters. More precisely, the range in \eqref{parcond} is determined by imposing the following conditions:
	\begin{enumerate}
		\item [(i)] $e^{-f}\in L^1(\Omega,dV_g)$;
		\item [(ii)] \eqref{condgammadiv}, in order to apply \Cref{lemmadivthm};
		\item [(iii)] \eqref{kpositive}, ensuring positivity of the weight $k$ defined in \eqref{k_nc} and used in the Young inequality \eqref{youngw_nc};
		\item [(iv)] \eqref{alphagamma}, which implies  \eqref{matrixgeq0}.
	\end{enumerate}
	From \eqref{ballinfty}, one easily checks that
	$$
	e^{-f}\in L^1(\Omega,dV_g) \iff \alpha>-n,
	$$
	and \Cref{lemmadivthm} holds provided that $\alpha>2-n$. Hence, conditions (i)--(iii) are satisfied when $\alpha>0$. Combining this with (iv), we obtain
	$$
	\alpha>0 \quad \text{and} \quad 1<\gamma<\gamma_2,
	\quad \text{with} \quad 
	\gamma_2 \coloneq 1+\sqrt{\frac{n-2}{\alpha+n-2}}.
	$$
	Hence, arguing as in the case $\gamma<1$, we conclude that $u$ is depends only on $d_g(x,+\infty)$, it is given by
	$$
	u(x)=\frac{R^2-d_g(x,+\infty)^2}{2(n+\alpha)}\, ,
	$$
	and
	$$\Omega = B_R^g(\infty) \, ,$$
	for some $R>0$.
\end{remark}
	\hfill\\ 
	\par In the compact case, we provided an alternative proof of \Cref{main} based on the $P$-function method, which relies on the strong maximum principle. More precisely, we considered the function  
	\begin{equation}\label{Pfunction_rem}
		P(u)=|\nabla u|_g^2 + \frac{2\beta}{n}\, u \, ,
	\end{equation}
	defined on $M=\overline{\Omega}^g$.
	Under the assumptions of \Cref{main}, if $u$ is a solution to \eqref{ourprob}, then $\lapw P(u)\geq 0$. By the strong maximum principle applied on $M$, $P(u)$ cannot attain a maximum in $\Omega$ unless it is constant throughout $\Omega$. Consequently, either $P(u)$ is constant in $\Omega$, or
	\begin{equation}\label{Pmaxprinc}
		P(u) < \sup_{\overline{\Omega}^g} P(u) \quad \text{in } \Omega \, .
	\end{equation}
	Since $\overline{\Omega}^g$ is compact and $P(u)\in C^0(\overline{\Omega}^g)$, from \eqref{Pmaxprinc} it follows that
	$$
	P(u) < \max_{\partial \Omega} P(u) \quad \text{in } \Omega \, .
	$$
	Using the boundary condition $u=0$ on $\partial \Omega$, we deduce that
	\begin{equation}\label{Pleqc2_complete}
		P(u) < c^2 \quad \text{in } \Omega \, .
	\end{equation}
	Finally, combining \eqref{Pleqc2_complete} with the Pohozaev identity, we showed that \eqref{Pleqc2_complete} cannot hold, and therefore $P(u)$ must be constant in $M$.
	\par In contrast, in the non-compact setting the manifold $M=\overline{\Omega}^g$ is no longer compact if $O \in \overline{\Omega}$ (see \Cref{preliminarynoncompl}). The strong maximum principle still implies that $P(u)$ cannot attain a maximum in $\Omega$ unless it is constant in $\overline{\Omega}^g$. Hence, if this is not the case, \eqref{Pmaxprinc} remains valid. However, due to the lack of compactness, $P(u)$ may fail to achieve a maximum on $\overline{\Omega}^g$. In particular, from \eqref{Pmaxprinc} we only obtain
	\begin{equation*}
		\sup_{\overline{\Omega}^g} P(u)
		= \max\left\{ \max_{\partial \Omega} P(u), \limsup_{x \to O} P(u(x)) \right\}.
	\end{equation*}
	Even if we assume additional regularity on $u$ so that $P(u)$ extends continuously to the origin, \eqref{Pmaxprinc} reduces to
	$$
	P(u) < \max\{ c^2, P(u(O)) \} \quad \text{in } \Omega \, .
	$$
	However, we have no a priori control on the value $P(u(O))$, and therefore we cannot conclude the argument by means of the Pohozaev identity as in the compact case.
	
	Thus, the proof based on the $P$-function method breaks down in the non-compact setting.
	This observation highlights the importance of our approach, which avoids the use of maximum principles and remains applicable in the non-compact setting.
\subsection{Failure of the rigidity result: an example}\label{failuresect}
In this section, we construct explicit examples of a Riemannian metric of the form \eqref{g} for which one cannot expect the ovederdetermined problem \eqref{ourprob} to have the ball as the solution. For simplicity of notation, we denote the weight function 
\begin{equation}\label{wrad}
	w \coloneqq e^{-f}=d_g(x,O)^\alpha \, .
\end{equation}
Let $\gamma<1$ and $u=u(d_g(x,O))$ be a radial solution to the equation
\begin{equation}\label{eq}
	\lapw u=\lap_g u+g(\nabla \log w,\nabla u)=-1 \quad \text{in} \ \Om \, .
\end{equation}
If $r(x) \coloneqq d_g(x,O) = \frac{|x|^{1-\gamma}}{1-\gamma}$, then straightforward computations show that
\begin{equation}\label{gradrad}
	Du = u' \frac{x}{|x|^{1+\gamma}}
\end{equation}
and, from \eqref{lap}
\begin{equation}\label{laprad}
	\lap_g u = \left(u''+\frac{(n-1)}{r}u'\right) \, ,
\end{equation}
where the primes $'$ and $''$ denote derivatives with respect to the radial variable $r$.\\
Since $w$ is radial, it follows from \eqref{gradrad} that
\begin{equation}\label{gradcdot}
	g(\nabla \log w,\nabla u)= \frac{w'u'}{w} \, .
\end{equation}
Substituting \eqref{gradcdot} and \eqref{laprad} into \eqref{eq}, we find that any $u$ radial solution to $\lapw u=-1$ must satisfy
\begin{equation}\label{eqradial}
	\left\{u''+\left(\frac{(n-1)}{r} +\frac{ w'}{w}\right)u'\right\} =-1 \quad \text{in} \ \Om \, .
\end{equation}
We now look for a radial solution to \eqref{ourprob} defined on an annulus
$$\Om \coloneq \{x \in \R^n\setminus \{O\} : a<r(x)<b\} \, ,$$
with $0<a<b$. Note that, in this case, the weight $w$ defined in \eqref{wrad} belongs to $L^1(\Om,dV_g)$ for every $\alpha \in \R$. Recalling \eqref{ballgeucl}, the ball $B_R^g$ centered at the origin corresponds to a Euclidean ball centered at the origin with a suitable radius. Therefore, the Euclidean outward unit normal vector to $\p\Om$ at $r=b$ is
$$\nu_e = \frac{x}{|x|} \, .$$
The corresponding outward unit normal vector with respect to the metric $g$ is
$$\nu_{b,i} =\frac{g^{ij}\nu_j}{(\nu_k g^{kl}\nu_l)^{1/2}}=\frac{x_i}{|x|^{1-\gamma}} \, .$$
Note that at $r=a$, the outward normal is $\nu_a=-\nu_b$.\\
Hence, using \eqref{gradrad} and \eqref{eqradial}, we see that $u$ must satisfy  
\begin{equation}\label{radprob}
	\begin{cases}
		u''+\left(\frac{(n-1)}{r} +\frac{w'}{w}\right)u'=-1 & \text{if} \ r \in (a,b) \, , \\
		u(a)=u(b)=0 \, ,\\
		u'(b)=-u'(a)=-c \, .
	\end{cases}
\end{equation}
Note that \eqref{eqradial} yields
$$(r^{n-1}w u')' =r^{n-1}w \left(u''+\left(\frac{n-1}{r}+\frac{w'}{w}\right)u'\right)=-r^{n-1}w \, .$$
Integrating, we obtain 
\begin{equation}\label{derurad}
	u_r(r)=C-\frac{1}{r^{n-1}w(r)}\int_a^r s^{n-1}w(s)\, ds \, ,
\end{equation}
where $C=a^{n-1}w(a)u'(a)$. \\
The boundary conditions $u(a)=u(b)=0$ imply 
$$\int_a^b u_r(r) \, dr =0 \, .$$
Substituting \eqref{derurad} into this identity gives
\begin{equation}\label{C}
	C \int_a^b \frac{1}{r^{n-1}w(r)} \, dr -\int_a^b \frac{1}{r^{n-1}w(r)} \left(\int_a^r s^{n-1}w(s) \, ds \right) \, dr =0 \, .
\end{equation}
Moreover, using \eqref{derurad}, the overdetermined condition $u'(b)=-u'(a)$ becomes
\begin{equation}\label{condneurad}
	C \left(\frac{1}{b^{n-1}w(b)}+\frac{1}{a^{n-1}w(a)}\right)=\frac{1}{b^{n-1}w(b)}\int_a^b s^{n-1}w(s) \, ds \, .
\end{equation}
Combining \eqref{C} and \eqref{condneurad}, we obtain the compatibility condition
\begin{equation}\label{compcond1}
	\begin{aligned}
		&\left(\frac{1}{b^{n-1}w(b)}+\frac{1}{a^{n-1}w(a)}\right)\int_a^b \frac{1}{r^{n-1}w(r)} \left(\int_a^r s^{n-1}w(s) \, ds \right) \, dr\\
		&= \frac{1}{b^{n-1}w(b)}\int_a^b \frac{1}{r^{n-1}w(r)} \, dr \int_a^b s^{n-1}w(s) \, ds \, .
	\end{aligned}
\end{equation}
Recalling that $w(r)=r^\alpha$ and defining 
\begin{equation}\label{beta}
	\beta \coloneq n+\alpha-1 \, ,
\end{equation}
we can rewrite \eqref{compcond1} as
\begin{equation}\label{compcond}
	\left(\frac{1}{b^\beta}+\frac{1}{a^\beta}\right)\int_a^b \frac{1}{r^\beta} \left(\int_a^r s^\beta \, ds \right) \, dr = \frac{1}{b^\beta}\int_a^b \frac{1}{r^\beta} \, dr \int_a^b s^\beta \, ds \, .
\end{equation}
Assume now that $\beta \neq -1$ and $\beta\neq 1$. These cases will be discussed later. Evaluating the integrals in \eqref{compcond} yields
\begin{align*}
	&\frac{1}{\beta+1}\left\{\left(\frac{1}{a^\beta}+\frac{1}{b^\beta}\right)\left(\frac{b^2-a^2}{2}\right)-\left(a-\frac{a^{\beta+1}}{b^\beta}\right)\int_a^b r^{-\beta}\right\}\\
	&=\frac{1}{\beta+1}\left(b-\frac{a^{\beta+1}}{b^\beta}\right)\int_a^b r^{-\beta}\, dr \, .
\end{align*}
Rearranging the terms, we obtain that
\begin{equation*}
	\int_a^b r^{-\beta} \, dr =\frac{b^2-a^2}{2(a+b)}\left(\frac{1}{a^\beta}+\frac{1}{b^\beta}\right) \, .
\end{equation*}
Since $\beta \neq 1$, we get
\begin{equation}\label{compcondbeta}
	\frac{b^{1-\beta}-a^{1-\beta}}{1-\beta}=\frac{b-a}{2}\left(\frac{1}{a^\beta}+\frac{1}{b^\beta}\right) \, .
\end{equation}
Introducing the variable $\rho \coloneq b/a >1$, equation \eqref{compcondbeta} becomes
\begin{equation}\label{compcondx}
	\frac{\rho^{1-\beta}-1}{1-\beta}=\frac{\rho-1}{2}(\rho^{-\beta}+1) \, .
\end{equation}
Thus, \eqref{compcondx} is the final form of the compatibility condition \eqref{compcond} for $\beta\neq -1,1$, that is, for weights $w(r)=r^\alpha$ with $\alpha \neq -n,2-n$. Fixing any $\alpha\in \R\setminus\{-n,2-n\}$, the existence of a solution $u$ to \eqref{radprob} requires that \eqref{compcondx} is satisfied for some $\rho=b/a>1$, where $\beta =n+\alpha-1$.\\
To determine for which values of $\beta$ this condition holds, we define the function 
$$F_\rho(\beta)\coloneq \frac{\rho^{1-\beta}-1}{1-\beta}-\frac{\rho-1}{2}(\rho^{-\beta}+1) \, .$$
Then, \eqref{compcondx} is satisfied if and only if $F_\rho(\beta)=0$. Now we show that, for any fixed $\rho>1$, $F_\rho(\beta)=0$ if and only if $\beta=-1$ or $\beta =0$. \\
Recall that $\beta \neq -1,1$. Consider the function
$$
f_\beta(t)=t^{-\beta} \, , \quad t\in[1,\rho] \, .
$$
A direct computation shows that
\begin{equation}\label{F1}
	\int_1^\rho f_\beta(t)\,dt
	=\int_1^\rho t^{-\beta}\,dt
	=\frac{\rho^{1-\beta}-1}{1-\beta} \, .
\end{equation}
Moreover, note that
\begin{equation}\label{F2}
	\frac{\rho-1}{2}\left(1+\rho^{-\beta}\right)=\frac{\rho-1}{2}\bigl(f_\beta(1)+f_\beta(\rho)\bigr) \, .
\end{equation}
Hence, using \eqref{F1} and \eqref{F2}, from the definition of $F_\rho(\beta)$ we have
$$
F_\rho(\beta)
=\int_1^\rho f_\beta(t)\,dt
-\frac{\rho-1}{2}\bigl(f_\beta(1)+f_\beta(\rho)\bigr) \, .
$$
Let $L_\beta(t)$ denote the secant line joining the points
$(1,f_\beta(1))$ and $(\rho,f_\beta(\rho))$. Since
$$
\int_1^\rho L_\beta(t)\,dt
=\frac{\rho-1}{2}\bigl(f_b(1)+f_b(\rho)\bigr) \, ,
$$
we may write
$$
F_\rho(\beta)=\int_1^\rho \bigl(f_\beta(t)-L_\beta(t)\bigr)\,dt \, .
$$
We now analyze the sign of $F_\rho(\beta)$. A straightforward computation yields
$$
f_\beta''(t)=\beta(\beta+1)\,t^{-\beta-2} \, .
$$
Since $t^{-\beta-2}>0$ for all $t>0$, the sign of $f_\beta''$ is determined by $\beta(\beta+1)$:
\begin{itemize}
	\item If $\beta>0$ or $\beta<-1$, then $\beta(\beta+1)>0$ and $f_\beta$ is strictly convex.
	\item If $-1<\beta<0$, then $\beta(\beta+1)<0$ and $f_\beta$ is strictly concave.
	\item If $\beta=0$ or $\beta=-1$, then $f_\beta''\equiv 0$ and $f_\beta$ is affine.
\end{itemize}
If $f_\beta$ is strictly convex, then $f_\beta(t)<L_\beta(t)$ for all $t\in(1,\rho)$,
which implies
$$
F_\rho(\beta)<0 \qquad \text{for } \beta>0 \text{ or } \beta<-1 \, .
$$
If $f_\beta$ is strictly concave, then $f_\beta(t)>L_\beta(t)$ for all $t\in(1,\rho)$,
which implies
$$
F_\rho(\beta)>0 \qquad \text{for } -1<\beta<0.
$$
Thus, for every fixed $\rho>1$, the function $F_\rho(\beta)$ has zeros only at
$$
\beta=0 \quad\text{and}\quad \beta=-1.
$$
We have excluded the case $\beta=-1$.\\
If $\beta=-1,1$, one can check directly from \eqref{compcond} that the compatibility condition reduces to
$$2t \ln t =t^2-1$$
for some $t=b/a>1$. It is not difficult to see that this equation has no solution for any $t>1$.\\
We therefore focus on the remaining admissible case $\beta=0$. From \eqref{beta} this corresponds to $\alpha=1-n$, and hence,
 $$w=d_g(x,O)^{1-n} \, .$$
Note that this choice of parameters does not belong to the admissible range \eqref{parcond}. In this case, we have
$$\frac{w'}{w}=-\frac{n-1}{r} \, .$$
Thus, it can be directly verified from equation \eqref{eqradial} that the function
$$u(x) = \frac{1}{2}(r(x)-a)(b-r(x))$$
defines a (smooth) solution to the equation $\lapw u=-1$ in the annular domain $\Om \coloneqq \{x \in \R^n\setminus \{O\}: a<r(x)<b\}$, where $a,b>0$.
\par This shows that \Cref{main_nc} cannot be extended to arbitrary values of the parameters $\alpha$ and $\gamma$. The result highlights the delicate nature of rigidity: while it holds within a specific parameter regime, outside this range it may fail, giving rise to more domain geometries. Moreover, our analysis shows that this is the only choice of parameters for which a radial solution of \eqref{ourprob} exists on an annulus.\\
Finally, if $\gamma = 0$ (i.e., in the Euclidean setting) note that this provides a partial answer to the open problem discussed in \Cref{ex_power}: we have exhibited a weight of the form $|x|^\alpha$ for which the existence of a solution to the weighted overdetermined problem \eqref{ourprob} does not imply that the domain is a ball. It is worth noting that, in this case, condition \eqref{Ricf_hp} is satisfied for $N\leq N_\beta=n/\beta$. Indeed, as noted in \Cref{ex_power}, we have $$\beta=\frac{n}{n+\alpha}$$
and so
$$N_\beta = \frac n \beta = n+\alpha \, .$$
Thus, from \eqref{Ricex1} we find that \eqref{Ricf_hp} holds. This shows that a condition of this type is non-restrictive: by itself, it does not imply that the existence of a radial solution to the overdetermined problem forces the domain to be a ball.
\section*{Acknowledgments}
The authors thank Luciano Mari for many useful discussions about this project. {The authors are members of the Gruppo Nazionale per l'Analisi Matematica, la Probabilit\`a e le loro Applicazioni (GNAMPA) of the Istituto Nazionale di Alta Matematica (INdAM, Italy). The second author has been partially supported by the INdAM - GNAMPA Project 2026, CUP \#E53C25002010001\#.}

\appendix
\crefalias{section}{appendix}
\section{}\label{appendix}
In this appendix we provide a proof of \Cref{farinaroncoroni}.
\begin{proof}[Proof of \Cref{farinaroncoroni}]
From \cite[Lemma 6]{FarinaRoncoroni} and its proof it follows that the domain $\Omega$ is a metric ball $B_R^g(p)$ centered at a maximum point $p$ of $u$ and the function $u$ is radial.

Recall that the injectivity radius at $p$, denoted $\inj(p)$, is the largest $\varepsilon>0$ such that the exponential map
$$
\exp_p : B(0,\varepsilon) \subset T_p M \longrightarrow B_\eps^g(p) \subset M
$$
is defined and is a diffeomorphism. Since $\overline\Om^g$ is compact by assumption, it follows that $\inj(p)>0$. Let
\begin{equation}\label{eps}
	\varepsilon = \min\{R, \inj(p)\} \, .
\end{equation}
We use the exponential map to introduce geodesic polar coordinates around $p$. Let $S^{n-1} \subset T_p M$ be the unit sphere, and fix an orthonormal basis $\{e_i\}_{i=1}^n$ of $T_p M$. For a vector $v = v^i e_i \in T_p M$, define its radial coordinate by
$$
\rho(v) = \sqrt{\sum_{i=1}^n (v^i)^2} \, ,
$$
and let $\{\psi^\alpha\}_{\alpha=1}^{n-1}$ be a coordinate system on $S^{n-1}$. Then $(\rho, \psi^1, \dots, \psi^{n-1})$ form coordinates on $(0,\varepsilon) \times S^{n-1} \subset T_p M$.
Through the exponential map, these induce coordinates $(r, \theta^1, \dots, \theta^{n-1})$ on $B_\eps^g(p)$ defined by
\begin{equation}\label{coordinates}
	r \coloneq \rho \circ \exp_p^{-1} \quad \text{and} \quad \theta^\alpha \coloneq \psi^\alpha \circ \exp_p^{-1}, \quad \alpha=1,\dots,n-1.
\end{equation}
In these coordinates, the metric takes the form
\begin{equation*}
	g = dr \otimes dr + g_{\alpha\beta}(r,\theta) \, d\theta^\alpha \otimes d\theta^\beta \, ,
\end{equation*}
with 
\begin{equation}\label{condg}
	\frac{g_{\alpha\beta}(r,\theta)}{r^2}\to \overline g_{\alpha\beta}(\theta) \quad \text{as} \ r\to 0 \, ,
\end{equation}
where $\overline g_{\alpha\beta}(\psi)d\psi^\alpha\ot d\psi^\beta$ is the standard metric on $S^{n-1}$ induced by $\R^n$.\smallskip
\par Our strategy aims first to show that
$$
g_{\alpha\beta}(r,\theta) = \mathrm{sn}_k^2(r) \, \overline{g}_{\alpha\beta}(\theta) \quad \text{in } B_\eps^g(p) \, .
$$
Next, we prove that $\exp_p$ remains a diffeomorphism on all of $\Omega$. This implies that $\inj(p) \geq R$, so that $\varepsilon = R$. Consequently, we conclude that
$$
g_{\alpha\beta}(r,\theta) = \mathrm{sn}_k^2(r) \, \overline{g}_{\alpha\beta}(\theta) \quad \text{in } B_R^g(p)=\Om \, ,
$$
from which it follows that $\Omega$ is isometric to a metric ball in the space form $S^n_k$.\smallskip\\
\textbf{Step 1:} $p$ is the only critical point of $u$.

Let $q \in \overline\Omega^g$ with $q\neq p$. By \cite[Proposition 12]{FarinaRoncoroni}, there exists a minimizing and unit speed geodesic 
$\gamma_q : [0,1] \to \Omega$ such that $\gamma_q(0) = p$ and $\gamma_q(1) = q$. Define
\begin{equation}\label{ft}
	f(t) := u(\gamma_q(t)) \, .
\end{equation}
Note that
\begin{equation}\label{f'g}
	f'(t) = g(\nabla u(\gamma_q(t)), \dot{\gamma}_q(t)) \, .
\end{equation}
Moreover, $f$ satisfies
\begin{align*}
	f''(t)&=\frac{d^2}{dt^2}(u\circ \gamma_q)(t)\\
	&=\frac{d}{dt}g(\nabla u(\gamma_q(t)),\dot\gamma (t))\\
	&=g\big(\nabla_{\dot\gamma_q}\nabla u(\gamma_q(t)),\dot\gamma_q(t)\big)+g\big(\nabla u(\gamma_q(t)),\nabla_{\dot\gamma_q}\dot\gamma_q(t)\big)\\
	&=g\big((\nabla_{\dot\gamma_q(t)}\nabla u)(\gamma_q(t)),\dot\gamma_q(t)\big)\\
	&=\nabla^2 u\rvert_{\gamma_q(t)}(\dot\gamma_q(t),\dot\gamma_q(t)) \, .
\end{align*}
Then, from the equation in \eqref{eqhessu} we obtain that
\begin{equation}\label{f_geod}
	f''(t) = -\left( \frac{1}{n} + k \, f(t) \right) \, .
\end{equation}
Since $p$ is a maximum point for $u$, evaluating \eqref{f'g} at $t=0$ gives
\begin{equation}\label{f'0}
	f'(0) = 0 \, .
\end{equation}
Hence, integrating \eqref{f_geod} and imposing \eqref{f'0} yields
\begin{equation}\label{f'}
	f'(t) = -\frac{t}{n} - k \int_0^t f(s) \, ds \, .
\end{equation}
Recall that $u$ is positive in $\Omega$, and thus
\begin{equation*}
	f(s) > 0 \, , \quad s \in [0,1).
\end{equation*}
Therefore, if $k \ge 0$, it follows immediately from \eqref{f'} that $f'(t) = 0$ if and only if $t = 0$. \\
Now assume $k < 0$.
Solving \eqref{f_geod} with initial conditions $f(0) = u(p)$ and $f'(0) = 0$, we obtain
\begin{equation*}
	f(t) = \left( u(p) + \frac{1}{nk} \right) \cosh(\sqrt{-k} \, t) - \frac{1}{nk} \, ,
\end{equation*}
and therefore
\begin{equation*}
	f'(t) = \left( u(p) + \frac{1}{nk} \right) \sqrt{-k} \, \sinh(\sqrt{-k} \, t) \, ,
\end{equation*}
which vanishes if and only if $t = 0$. Indeed, we must have $\left( u(p) + \frac{1}{nk} \right) \neq 0$ (its value will be determined later by imposing the boundary condition $u=0$ on $\p\Om$). Otherwise, taking $q \in \p\Om$ and recalling \eqref{ft}, we would obtain in particular 
$$f(1)=u(q)=-\frac{1}{nk}\neq 0 \, ,$$
which contradicts the boundary condition $u=0$ on $\p\Om$.\\
Hence, since $f'(t)=0$ if and only if $t=0$, from \eqref{f'g} we conclude that $p$ is the only critical point of $u$.\smallskip\\
\textbf{Step 2:} $g_{\alpha\beta}(r,\theta)=\mathrm{sn}_k^2(r) \overline g_{\alpha\beta}(\theta)$ in $B_\eps^g(p)$.\\
Recall that $r$ coincides with the distance from $p$ in $B_\eps^g(p)\setminus\{p\}$. Hence, since $u$ is radial, we have that
\begin{equation}\label{nablaur}
	\nabla u=u'(r)\nabla r \quad \text{in } B_\eps^g(p)\setminus\{p\} \, .
\end{equation}
Recalling that $|\nabla r|_g=1$, we get
$$\frac{\nabla u}{|\nabla u|_g}=\sign(u'(r))\nabla r \, .$$ 
From Step 1, we know that $p$ is the unique maximum point of $u$, so $u$ must decrease along any geodesic moving away from $p$. It follows that $u'(r)<0$. Therefore, we conclude that
\begin{equation}\label{nablar}
	\frac{\nabla u}{|\nabla u|_g}=-\nabla r \quad \text{in} \ B_\eps^g(p)\setminus\{p\} \, .
\end{equation}
Let $X,Y$ be smooth vector fields on $B_\eps^g(p)$. Note that, using \eqref{nablaur}, we have
\begin{align*}
	\nabla^2u(X,Y)&=\nabla(u'dr)(X,Y)\\
	&=\nabla_X(u'dr)(Y)\\
	&=X(u')dr(Y)+u'\nabla_X(dr)(Y) \, ,
\end{align*}
that is,
$$\nabla^2u(X,Y)=X(u')dr(Y)+\nabla^2 r(X,Y) \, .$$
Thus, taking first $X=Y=\p_r$, then $X=\p_{\theta^\al}$, $Y=\p_{\theta^\bb}$ and finally $X=\p_r$, $Y=\p_{\theta^\al}$, we obtain respectively
$$\nabla^2 u(\p_r,\p_r)=u''\, ,$$
\begin{align*}
	\nabla^2 u(\p_{\theta^\al},\p_{\theta^\bb})=u'\nabla^2r(\p_{\theta^\al},\p_{\theta^\bb})&=\frac 1 2 u' g(\nabla_{\p_{\theta^\al}}\p_r,\p_{\theta^\bb})\\
	&=\frac 1 2 u' g(\nabla_{\p_r}\p_{\theta^\al},\p_{\theta^\bb})\\
	&=\frac 1 2 u' \p_r g_{\al\bb} 
\end{align*}
and 
$$\nabla^2 u(\p_r,\p_{\theta^\al})=0 \, .$$
Therefore, we conclude that 
\begin{equation}\label{hessur}
	\nabla^2 u=u'' dr \ot dr + \frac 1 2 u' \p_r g_{\al\bb} d\theta^\al\ot d\theta^\bb \quad \text{in } B_\eps^g(p)\, .
\end{equation}
Comparing \eqref{hessur} with \eqref{eqhessu}, we deduce that
\begin{equation}\label{equ''}
	u''=-\left(\frac 1 n +ku\right)\, , \quad r \in (0,\eps)
\end{equation}
and 
\begin{equation}\label{eqg}
	\p_r g_{\al\bb}=-\frac{2}{u'}\left(\frac 1 n +ku\right)g_{\al\bb}\, , \quad r\in(0,\eps)\, .
\end{equation}
Solving \eqref{equ''} with the initial conditions $u'(0)=0$ and $u(0)=u(p)$, we find that in $B_\eps^g(p)$ 
\begin{equation}\label{uBeps}
	u(r)=\begin{cases}
		\left(u(p)+\frac{1}{nk}\right) \cos(\sqrt{k}r)-\frac{1}{nk} & \text{if } k>0\, ,\\
		u(p)-\frac{r^2}{2n} & \text{if } k=0 \, ,\\
		\left(u(p)+\frac{1}{nk}\right)\cosh(\sqrt{-k}r)-\frac{1}{nk} & \text{if } k<0 \, .
	\end{cases}
\end{equation}
It follows that
\begin{equation}\label{u'}
	u'(r)=\begin{cases}
		-\left(u(p)+\frac{1}{nk}\right) \sqrt{k}\sin(\sqrt{k}r) & \text{if } k>0\, ,\\
		-\frac{r}{n} & \text{if } k=0 \, ,\\
		\left(u(p)+\frac{1}{nk}\right)\sqrt{-k}\sinh(\sqrt{-k}r) & \text{if } k<0 \, .
	\end{cases}
\end{equation}
Finally, substituting \eqref{uBeps} and \eqref{u'} into \eqref{eqg}, solving the resulting equation and imposing the condition \eqref{condg}, we obtain
\begin{equation}\label{formgalb}
	g_{\al,\bb}(r,\theta)=\mathrm{sn}_k^2(r) \, \overline{g}_{\alpha\beta}(\theta) \quad \text{in } B_\eps^g(p) \, .
\end{equation}
\textbf{Step 3:} $\inj(p) \geq R$.\\
Let $v \in T_pM$ with $|v|_g=\inj(p)$. Then, either
\begin{equation}\label{caseexp1}
	(\exp_p)_{*,v} \text{ is not an isomorphism}
\end{equation}
or
there exists $v_1 \in T_pM$, $v_1\neq v$ with $|v_1|_g=|v|_g$ such that
\begin{equation}\label{caseexp2}
	\exp_p(v)=\exp_p(v_1)\eqqcolon x \text{ and } \left. \frac{d}{dt} \right\rvert_{t = 1} \exp_p(tv)=-\left. \frac{d}{dt} \right\rvert_{t = 1} \exp_p(t v_1) \, ,
\end{equation}
as shown, for example, in \cite[Lemma 5.7.12]{Petersen}.
By contradiction, assume that $|v|_g<R$. Then, recalling \eqref{eps}, we have
$$\eps=|v|_g \, .$$ 
\par We first show that \eqref{caseexp1} cannot occour. Note that
$$(\exp_p)_{*,v}:T_v(B(0,|v|_g))\to T_{\exp_p(v)}(B_{|v|_g}^g(p))\, ,$$
and
$$ \left(\left.\frac{\p}{\p\rho}\right\rvert_{v},\left.\frac{\p}{\p\psi^1}\right\rvert_{v},\dots,\left.\frac{\p}{\p\psi^{n-1}}\right\rvert_{v}\right)$$
is a basis of $T_v(B(0,|v|_g))$.\\
From \eqref{formgalb}, the pull-back metric $(\exp_p)^* g$ induced on $B(0,|v|_g)$ has the form 
\begin{equation}\label{formgtpm}
	(\exp_p)^* g=d\rho\ot d\rho +\mathrm{sn}_k^2(\rho)\, \overline g_{\al\bb}(\psi)d\psi^\al \ot d\psi^\bb \, .
\end{equation}
In particular, 
$$\left.\frac{\p}{\p\rho}\right\rvert_{v} \notin \ker((\exp_p)_{*,v}) \, ,$$
since \eqref{formgtpm} implies
$$1=((\exp_p)^* g)\left(\left.\frac{\p}{\p\rho}\right\rvert_{v},\left.\frac{\p}{\p\rho}\right\rvert_{v}\right)=g\left((\exp_p)_{*,v}\left(\left.\frac{\p}{\p\rho}\right\rvert_{v}\right),(\exp_p)_{*,v}\left(\left.\frac{\p}{\p\rho}\right\rvert_{v}\right)\right)$$
Hence, we can choose coordinates $\{\psi^\alpha\}_\al$ such that 
\begin{equation}\label{kerexp*}
	\left.\frac{\p}{\p\psi^\al}\right\rvert_{v} \in \ker((\exp_p)_{*,v})
\end{equation}
for some $\alpha \in \{1,\dots,n-1\}$.\\
Let $t \in (0,1)$. Since $tv \in B(0,|v|_g)$, from \eqref{formgtpm} we obtain
\begin{equation}\label{gaasin}
	g_{\alpha\alpha}\big(\rho(tv),\theta(tv)\big)=\mathrm{sn}_k^2(\rho(tv))\,\overline g_{\alpha\al}(\psi(tv))=\mathrm{sn}_k^2(t|v|_g)\,\overline g_{\alpha\al}(\psi(tv)) \, .
\end{equation}
On the other hand, 
\begin{equation}\label{gaa0}
	\begin{aligned}
		g_{\alpha\alpha}(\rho(tv),\psi(tv))&=((\exp_p)^* g)\left(\left.\frac{\p}{\p\psi^\al}\right\rvert_{tv},\left.\frac{\p}{\psi^\al}\right\rvert_{tv}\right)\\
		&=g\left((\exp_p)_{*,tv}\left(\left.\frac{\p}{\p\psi^\al}\right\rvert_{tv}\right),(\exp_p)_{*,tv}\left(\left.\frac{\p}{\p\psi^\al}\right\rvert_{tv}\right)\right) \, .
	\end{aligned}
\end{equation}
Comparing \eqref{gaa0} and \eqref{gaasin}, we get
\begin{equation*}
	g\left((\exp_p)_{*,tv}\left(\left.\frac{\p}{\p\psi^\al}\right\rvert_{tv}\right),(\exp_p)_{*,tv}\left(\left.\frac{\p}{\p\psi^\al}\right\rvert_{tv}\right)\right)=\mathrm{sn}_k^2(t|v|_g)\overline g_{\alpha\al}(\psi(tv)) \, .
\end{equation*}
Thus, letting $t\to 1$ and using \eqref{kerexp*}, we obtain that
$$0=\mathrm{sn}_k^2(\rho(v))\overline g_{\al\al}(\psi(v)) \, .$$
Since $\overline g_{\al\al}(\psi(v))>0$, it follows that
$$\mathrm{sn}_k^2(\rho(v))=\mathrm{sn}_k^2(|v|_g)=0 \, .$$
This implies
$$|v|_g=0 \, ,$$
which is impossible, since $|v|_g=\inj(p)>0$.\\
\par We now show that \eqref{caseexp2} cannot occur either. Define
$$\gamma_j(t)\coloneq \exp_p(tv_j)\, , \quad j \in \{1,2\}\, ,$$
where $v_0\coloneq v$. Since each $\gamma_j$ is a geodesic starting from $p$, we have 
$$\dot\gamma_j(t)=|\dot\gamma_j|_g\nabla r(\gamma_j(t))\, , \quad t \in (0,1) \, .$$
Note that $r(\gamma_j(t))<|v|_g=r(x)$ and $|\dot\gamma_1|_g=|\dot\gamma_2|_g=|v|_g$. Then, using \eqref{nablar} we obtain that
$$\dot\gamma_j(t)=-|v|_g\frac{\nabla u}{|\nabla u|_g}(\gamma_j(t))\, , \quad t \in (0,1) \, .$$
By assumption, $r(x)=|v|_g<R$, so $\nabla u$ is continuous on $\overline B_{|v|_g}(p)$. Therefore, letting $t\to 1$ we conclude that
$$\dot\gamma_j(1)=-\frac{\nabla u}{|\nabla u|_g}(x)\, ,\quad j \in\{1,2\} \, .$$
This contradicts \eqref{caseexp2}, which asserts that $\dot\gamma_0(1)=-\dot\gamma_1(1)$.\\
\par Thus, it must hold
$$\inj(p)\geq R \, ,$$
which implies
\begin{equation}\label{epsR}
	\varepsilon = R \, .
\end{equation}
Hence, from \eqref{uBeps} we conclude that in $\Om$ $u$ takes the form
\begin{equation*}
	u(r)=\begin{cases}
		\left(u(p)+\frac{1}{nk}\right) \cos(\sqrt{k}r)-\frac{1}{nk} & \text{if } k>0\, ,\\
		u(p)-\frac{r^2}{2n} & \text{if } k=0 \, ,\\
		\left(u(p)+\frac{1}{nk}\right)\cosh(\sqrt{-k}r)-\frac{1}{nk} & \text{if } k<0 \, .
	\end{cases} 
\end{equation*}
Imposing the boundary condition $u(R)=0$ we determine $u(p)$, yielding
\begin{equation*}
	u(r)=\begin{cases}
		\frac{\cos(\sqrt{k}r)}{kn \cos(\sqrt{k}R)}-\frac{1}{nk} & \text{if } k>0\, ,\\
		\frac{R^2-r^2}{2n} & \text{if } k=0 \, ,\\
		\frac{\cosh(\sqrt{-k}r)}{kn\cosh(\sqrt{-k}R)}-\frac{1}{nk} & \text{if } k<0 \, .
	\end{cases} 
\end{equation*}
Finally, note that, if $k>0$, then $u$ is positive only if $$R<\frac{\pi}{2\sqrt{k}} \, .$$ 
Moreover, from \eqref{formgalb} and \eqref{epsR} we obtain
\begin{equation}\label{formgOm}
	g_{\al,\bb}(r,\theta)=\mathrm{sn}_k^2(r) \, \overline{g}_{\alpha\beta}(\theta) \quad \text{in } \Om \, .
\end{equation}
Thus, recalling \eqref{Rk}, we conclude that $\Om$ is isometric to a metric ball in the space form $S_k^n$. 
\end{proof}

\end{document}